\documentclass[11pt]{article}
\pdfoutput=1

\usepackage[dvipsnames]{xcolor}

\usepackage{caption}

\usepackage{indentfirst}
\usepackage{bm, mathrsfs, graphics,float,amssymb,amsmath,subeqnarray,setspace,graphicx,amsthm,epstopdf,subfigure, enumerate, color}
\usepackage[utf8]{inputenc}
\usepackage[colorlinks,
            linkcolor=red,
            anchorcolor=blue,
            citecolor=blue
            ]{hyperref}
\usepackage{natbib}

\newcommand{\papertitle}{High-Probability Last-Iterate Guarantees for Two-Point Gaussian Zeroth-Order Stochastic Gradient Descent}
\hypersetup{
	pdftitle={High-Probability Last-Iterate Guarantees for Two-Point Gaussian Zeroth-Order Stochastic Gradient Descent},
	pdfauthor={Haishan Ye},
	pdfkeywords={zeroth-order stochastic optimization, derivative-free optimization, stochastic approximation, high-probability analysis, sub-Gaussian noise, strong convexity}
}

\usepackage{fullpage}
\numberwithin{equation}{section}

\newtheorem{lemma}{Lemma}
\newtheorem{theorem}{Theorem}
\newtheorem{proposition}{Proposition}
\newtheorem{corollary}[theorem]{Corollary}
\newtheorem{remark}{Remark}
\ifx\assumption\undefined
\newtheorem{assumption}{Assumption}
\fi

\newcommand{\cE}{\mathcal{E}}
\newcommand{\EE}{\mathbb{E}}
\newcommand{\RR}{\mathbb{R}}
\newcommand{\bg}{\bm{g}}
\newcommand{\ba}{\bm{a}}
\newcommand{\be}{\bm{e}}
\newcommand{\bx}{\bm{x}}
\newcommand{\by}{\bm{y}}
\newcommand{\bu}{\bm{u}}
\newcommand{\bv}{\bm{v}}
\newcommand{\bz}{\bm{z}}
\newcommand{\cF}{{{\mathcal{F}}}}
\newcommand{\PP}{\mathbb{P}}
\newcommand{\cC}{\mathcal{C}}
\newcommand{\proofstep}[1]{\par\medskip\noindent\textit{#1}\ }

\usepackage{algorithm}
\usepackage{algorithmic}

\begin{document}
\title{\papertitle}

\author{
Haishan Ye
\thanks{
	Xi'an Jiaotong University;
	email: hsye\_cs@outlook.com
}
}
\date{\today}

\maketitle

\newcommand{\norm}[1]{\left\|#1\right\|}
\newcommand{\dotprod}[1]{\left\langle #1\right\rangle}

\begin{abstract}
We establish a direct high-probability last-iterate guarantee for the standard
same-sample two-point Gaussian zeroth-order SGD method in smooth, strongly
convex stochastic optimization. At each iteration, the method draws a fresh
Gaussian direction, evaluates two symmetric perturbations with the same
stochastic sample, and takes a norm-normalized stochastic approximation step.
Assuming unbiased stochastic gradients and a conditional exponential-moment
bound on the squared norm of the stochastic gradient noise, we prove a
finite-horizon bound, valid for dimension \(d\ge2\), with an explicit
product-weight factor. When the offset in the stepsize schedule is large enough
relative to the logarithmic confidence terms, this factor is bounded and the
result gives
\[
f(\bx_T)-f(\bx^*)
=
\widetilde{\mathcal O}\!\left(\frac{d}{T}\right)
\]
with probability at least \(1-\delta\), up to fixed problem parameters and
logarithmic factors. Thus the confidence dependence is logarithmic rather than
polynomial in \(1/\delta\), and the proof neither invokes Markov's inequality
nor truncates the noise. To the best of our knowledge, this is the first direct
high-probability last-iterate result at this zeroth-order scale for the
same-sample Gaussian recursion under conditional sub-Gaussian
stochastic-gradient noise. The proof combines uniform weighted lower and upper
scans for Gaussian angles, a product-martingale boundary for the signed
suffix-product term, and terminal nonnegative concentration estimates. We also
formulate the resulting general pathwise framework for stochastic recursions
with random contraction and signed perturbations, identifying the scan,
filtration, variance, and terminal-control conditions under which the same
argument applies.
\end{abstract}

\noindent\textbf{Keywords:} zeroth-order stochastic optimization;
derivative-free optimization; stochastic approximation; high-probability
analysis; sub-Gaussian noise; strong convexity.

\section{Introduction}

We consider stochastic optimization problems in which gradients are unavailable,
unreliable, or prohibitively expensive. This regime arises in simulation
optimization \citep{nemirovski2009robust,ghadimi2013stochastic}, black-box and
derivative-free optimization \citep{conn2009introduction,nesterov2017random},
bandit learning \citep{flaxman2005online,shamir2017optimal}, stochastic
approximation with oracle feedback \citep{robbins1951stochastic}, and
memory-constrained model adaptation \citep{malladi2023fine}. These settings
call for finite-sample guarantees that make both the dimension and the
confidence level explicit. We consider
\begin{equation}\label{eq:f_sto}
	\min_{\bx\in\mathbb{R}^d} f(\bx)
	=
	\EE_\xi[f(\bx;\xi)]
\end{equation}
using only stochastic function values. A query at \(\bx\) returns
\(f(\bx;\xi)\) for a random sample \(\xi\). Within each two-point finite
difference, both evaluations use the same sample. The resulting randomized
finite-difference estimator can be inserted into a gradient-descent-type
recursion \citep{conn2009introduction,nesterov2017random,duchi2015optimal}. In
many simulation and black-box applications, however, the method is run once and
the terminal decision is deployed. A high-probability guarantee for that last
iterate is therefore more informative than an expected guarantee interpreted
through hypothetical independent repetitions.

Classical analyses of stochastic zeroth-order methods primarily control expected
suboptimality. For smooth, strongly convex objectives, they yield the familiar
\(\mathcal O(d/T)\) rate, up to problem-dependent constants
\citep{ghadimi2013stochastic,nesterov2017random,malladi2023fine,wang2023fine}.
Expectation alone does not certify the performance of a single run. Indeed, if
\(\EE[f(\bx_T)-f(\bx^*)]\le r(T)\), then Markov's inequality gives
\(\PP\{f(\bx_T)-f(\bx^*)>\varepsilon\}\le\delta\) only when
\(r(T)\le\delta\varepsilon\). Consequently, an \(\mathcal O(d/T)\) expected
rate yields only an \(\mathcal O(d/(\delta T))\) confidence bound
through this generic conversion. A direct concentration analysis should instead
incur only logarithmic dependence on \(1/\delta\), as sought in robust
stochastic approximation and large-deviation theory
\citep{nemirovski2009robust,ghadimi2013stochastic,gorbunov2021high}. In the
zeroth-order setting, a Markov conversion may additionally force the smoothing
radius to depend on the confidence level, obscuring the behavior of the
standard algorithmic parameterization \citep{ye2026smoothhpzo}.

A direct high-probability analysis faces a geometric obstacle absent from the
usual expectation argument. Even in the noiseless case, the descent term
contains the random alignment \((\bu_t^\top\nabla f(\bx_t))^2\). Its
conditional mean is proportional to \(\|\nabla f(\bx_t)\|^2\), which is
sufficient after taking expectations. Pathwise, however, the normalized
alignment has a one-dimensional \(\chi^2\)-type lower tail and cannot be
bounded away from zero at every iteration with high probability. Thus no
uniform deterministic contraction is available. The analysis must instead
accumulate contraction across many directions and control the accumulated
alignment simultaneously over the relevant suffixes of the trajectory.

The stochastic oracle creates a second obstacle. The finite-difference
estimator contains both smoothing bias and stochastic error. After the descent
recursion is unrolled, the linear error generated at time \(k\) is multiplied
by future random contractions determined by \(\{\bu_t\}_{t>k}\), producing a
signed adaptive suffix-product martingale. Replacing this product by a
deterministic envelope, or taking absolute values before applying
concentration, loses the cancellation needed for a sharp rate. At the same
time, the associated variance proxy couples weighted Gaussian projections with
quadratic-noise and smoothing terms. These components must be controlled on a
common event, uniformly over the terminal time.

Although our main theorem is stated for a zeroth-order Gaussian method with
same-sample stochastic-gradient noise, the proof exposes a more general
pathwise structure. Once an algorithmic error process can be written as a
random-contraction recursion, the roles of the contraction law, the signed
perturbation term, and the nonnegative terminal terms can be separated. Thus
the same method applies beyond the particular zeroth-order recursion whenever
the corresponding scan, filtration, variance, and terminal-control conditions
can be verified. Other sketch distributions and additive function-value noise
are concrete examples of this modular replacement.

\paragraph{Contributions.}
Our contributions are fivefold.
\begin{enumerate}[(i)]
	\item We derive a direct high-probability bound for the last iterate of the
	standard same-sample two-point Gaussian zeroth-order SGD recursion with the
	norm-normalized stepsize
	\(\eta_t=4d/[\mu(t+T_0)\|\bu_t\|^2]\). The argument neither truncates
	nor clips the noise, and it does not rely on online-to-batch or
	Markov-type conversions. To the best of our knowledge, no earlier direct
	result attains the \(\widetilde{\mathcal O}(d/T)\) scale for this
	recursion under conditional sub-Gaussian stochastic-gradient noise.
	\item The guarantee is dimension-explicit. Under
	the clean-rate condition \(T_0\gtrsim\Gamma_T(\delta)\),
	Corollary~\ref{cor:main_1} yields
	\[
		f(\bx_T)-f(\bx^*)
		=
		\widetilde{\mathcal O}\!\left(\frac{d}{T}\right)
	\]
	with probability at least \(1-\delta\), after fixed problem
	parameters and logarithmic factors. Without this simplification, the
	finite-horizon theorem keeps the explicit product-weight factor
	\(c_\rho=\exp\{O(\Gamma_T(\delta)/T_0)\}\).
	\item We give a proof architecture for the unrolled recursion that is uniform
	in the terminal time. Weighted lower and upper scans control accumulated
	Gaussian alignment over all relevant suffixes and prefixes, replacing an
	unavailable per-step contraction by aggregate contraction and providing the
	energy bounds needed by variance envelopes. A stitched product-martingale
	boundary then controls the signed linear term while retaining the realized
	product weights until cancellation has been used. Terminal concentration
	estimates control the quadratic-noise and smoothing-bias terms.
	\item We isolate the pathwise framework behind the proof. The framework is
	organized around a generic random-contraction stochastic recursion, strict
	product stability, predictable weighted scans, a martingale transformation
	for the signed perturbation, and terminal nonnegative controls. This
	formulation clarifies which parts of the proof are structural and which
	conditions must be checked in any new model before the method can be reused.
	\item We illustrate this framework within zeroth-order settings by changing
	the direction law and the oracle perturbation module. Rademacher and random
	coordinate directions require new weighted scan and projection estimates,
	while additive function-value noise replaces the signed and nonnegative
	modules and yields new \(Q_t\), \(B_t\), and intrinsic-variance terms.
\end{enumerate}

The remainder of the paper is organized as follows. We first review related
work, then introduce the algorithm, assumptions, and main results.
Section~\ref{sec:zsgd} develops the descent recursion, the time-uniform
concentration events, and the induction proving the main theorem.
Section~\ref{sec:pathwise_template} presents the general pathwise framework
behind the proof and then illustrates its use for other zeroth-order direction
laws and oracle perturbation modules. The appendix collects the auxiliary
concentration inequalities and detailed proofs.

\subsection{Related Work}

\paragraph{Derivative-free and bandit convex optimization.}
Zeroth-order and derivative-free methods are classical tools in continuous
optimization \citep{conn2009introduction}. Gaussian- and sphere-smoothed
randomized methods were analyzed by \citet{nesterov2017random}, and
\citet{duchi2015optimal} established minimax rates for zeroth-order convex
optimization. In online convex optimization, \citet{flaxman2005online}
introduced one-point bandit estimators, with subsequent work obtaining sharper
rates from two-point feedback \citep{agarwal2010optimal,shamir2017optimal}.
Recently, \citet{yu2026improved} showed that online convex optimization with
two-point feedback can achieve the optimal expected regret for strongly convex
losses, with optimal dependence on both the horizon and the dimension.
High-probability two-point bandit results, including \citet{ye2026optimal},
address related concentration questions but operate under different oracle
models, output criteria, and boundedness assumptions. In particular, an online
regret bound does not directly imply a last-iterate confidence guarantee for the
stochastic-approximation recursion studied here.

\paragraph{High-probability stochastic approximation.}
Stochastic approximation originates with \citet{robbins1951stochastic}, and
confidence guarantees beyond expectation have been studied through robust and
large-deviation analyses
\citep{nemirovski2009robust,ghadimi2013stochastic,gorbunov2021high}. For
first-order SGD, high-probability bounds are well understood under bounded
stochastic gradients or bounded noise \citep{rakhlin2012making}.
\citet{liu2023revisiting} removed an almost-sure bounded-gradient condition and
proved high-probability last-iterate guarantees under sub-Gaussian noise,
obtaining an \(\mathcal O(\log T/T)\) rate without prior knowledge of \(T\). Their
analysis is a natural first-order benchmark. A zeroth-order proof must
additionally handle random search directions, smoothing bias, and stochastic
finite-difference errors.

\paragraph{Stochastic zeroth-order methods.}
Most stochastic zeroth-order analyses establish guarantees in expectation
\citep{ghadimi2013stochastic,duchi2015optimal,shamir2017optimal}. For smooth,
strongly convex objectives, randomized methods attain
\(\mathcal O(d/T)\) expected rates under bounded-variance-type conditions
\citep{wang2023fine,malladi2023fine}. High-probability guarantees are also
available for robustified algorithms. In a nonsmooth heavy-tailed setting,
\citet{kornilov2023acceleratedzo} derive iteration and oracle-complexity bounds
for accelerated clipped schemes. Those results modify the estimator to obtain
robustness; by contrast, we analyze the unmodified same-sample two-point
Gaussian recursion and target its last iterate under conditional sub-Gaussian
stochastic-gradient noise.

Table~\ref{tab:rates_sto} gives a schematic rate comparison. Because the oracle
models and output criteria differ across rows, the table should be read only at
the level of rate order. For orientation, first-order results are displayed on
the customary zeroth-order scale by including a dimension factor; our bound is
proved directly for the two-point Gaussian method.

\begin{table}[tb]
	\centering
	\small
	\resizebox{\textwidth}{!}{%
	\begin{tabular}{lllc}
		\hline
		\textbf{Reference} & \textbf{Guarantee} & \textbf{Noise assumption} & \textbf{Rate after \(T\) iterations} \\ \hline
		\citet{bottou2018optimization} & Expectation & Bounded variance & $\mathcal{O}\!\left(\frac{d}{T}\right)$ \\ \hline
		\citet{rakhlin2012making} & High probability & Bounded gradient & $\widetilde{\mathcal{O}}\!\left(\frac{d}{T}\right)$ \\ \hline
		\citet{liu2023revisiting} & High probability & Sub-Gaussian noise & $\widetilde{\mathcal{O}}\!\left(\frac{d}{T}\right)$ \\ \hline
		\citet{ye2026optimal} & High probability & Bounded gradient & $\mathcal{O}\!\left(\frac{d(\log T+\log(1/\delta))}{T}\right)$ \\ \hline
		\citet{wang2023fine} & Expectation & Bounded variance & $\mathcal{O}\!\left(\frac{d}{T}\right)$ \\ \hline
		\textbf{This paper} & \textbf{High probability} & \textbf{Conditional sub-Gaussian noise} & $\mathcal{O}\!\left(\frac{d\log(T+T_0)\Gamma_T(\delta)}{T+T_0}\right)$ \\ \hline
	\end{tabular}
	}
	\caption{Schematic rate comparison for smooth, strongly convex
	stochastic optimization. The oracle models, assumptions, and output criteria
	differ across rows. First-order results are displayed on a
	zeroth-order scale by including the conventional dimension factor. The entry
	for this paper suppresses fixed problem parameters and the additional factor
	\(1+\Gamma_T(\delta)/T_0\) in Corollary~\ref{cor:main_1}.}
	\label{tab:rates_sto}
\end{table}

Table~\ref{tab:direct_zo_comparison} isolates the closest zeroth-order
comparisons. The salient distinctions concern the oracle model, the output
criterion, whether the guarantee is in expectation or with high probability,
and whether robustness is obtained from boundedness, clipping, or a
sub-Gaussian noise condition.

\begin{table}[tb]
	\centering
	\small
	\resizebox{\textwidth}{!}{%
	\begin{tabular}{lllll}
		\hline
		\textbf{Reference} & \textbf{Oracle model} & \textbf{Output/criterion} & \textbf{Guarantee} & \textbf{Noise or boundedness} \\ \hline
		\citet{ghadimi2013stochastic} & Stochastic zeroth-order & Expected stationarity/suboptimality & Expectation & Bounded variance \\ \hline
		\citet{duchi2015optimal}, \citet{shamir2017optimal} & Two-point zeroth-order/bandit & Minimax or averaged guarantees & Expectation/regret & Model-dependent boundedness \\ \hline
		\citet{wang2023fine} & Gaussian zeroth-order & Strongly convex suboptimality & Expectation & Bounded variance \\ \hline
		\citet{kornilov2023acceleratedzo} & Clipped zeroth-order method & Convex oracle complexity & High probability & Heavy-tailed noise with clipping \\ \hline
		\citet{ye2026optimal} & Two-point bandit feedback & Online regret & High probability & Bounded gradient \\ \hline
		\textbf{This paper} & \textbf{Two-point Gaussian oracle} & \textbf{Last iterate} & \textbf{High probability} & \textbf{Sub-Gaussian noise} \\ \hline
	\end{tabular}
	}
	\caption{Direct comparison with representative zeroth-order results.
	Our theorem concerns the last iterate of the standard same-sample
	two-point Gaussian stochastic-approximation recursion; it should therefore
	not be conflated with expectation bounds, minimax risk guarantees, or online
	regret results.}
	\label{tab:direct_zo_comparison}
\end{table}

\section{Preliminaries and Assumptions}

\subsection{Preliminaries}
We begin with the two-point estimator and the resulting stochastic recursion.
At iteration \(t\), the method draws a fresh Gaussian direction and a fresh
oracle sample; both function evaluations in the finite difference use the same
sample \(\xi_t\).

Given a point \(\bx\), a direction \(\bu\), and a common sample \(\xi\), the
oracle is queried at \(\bx+\alpha\bu\) and \(\bx-\alpha\bu\), and the
estimator is defined by
\begin{equation}\label{eq:sg}
	\bg(\bx;\xi) = \frac{f(\bx+\alpha\bu;\xi) - f(\bx-\alpha\bu;\xi)}{2\alpha}\bu.
\end{equation}

Algorithm~\ref{alg:SA_1} applies this estimator recursively.

\begin{algorithm}[t]
	\caption{Same-Sample Two-Point Gaussian Zeroth-Order SGD}
	\label{alg:SA_1}
	\begin{small}
		\begin{algorithmic}[1]
			\STATE {\bf Input:}
			Deterministic initial point \(\bx_0\) and horizon \(T\).
			\STATE Set \(T_0\) and the smoothing parameter \(\alpha\) as follows:
			\[
				T_0=\frac{32dL}{\mu},
				\qquad
				\alpha=\frac{1}{\sqrt{d(T+T_0)}}.
			\]
			\FOR {$t=0,1,2,\dots, T-1$ }
			\STATE Draw $\bu_t \sim \mathcal N(0, \bm{I}_d)$ and, independently, a fresh oracle sample \(\xi_t\).
			\STATE Set the stepsize
			\[
				\eta_t=\frac{4d}{\mu(t+T_0)\norm{\bu_t}^2}.
			\]
			\STATE Evaluate \(f(\bx_t +\alpha \bu_t;\xi_t)\) and \(f(\bx_t -\alpha \bu_t;\xi_t)\), and set
			\begin{equation*}
				\bg(\bx_t;\xi_t) = \frac{f(\bx_t+\alpha\bu_t;\xi_t) - f(\bx_t-\alpha\bu_t;\xi_t)}{2\alpha} \bu_t.
			\end{equation*}
			\STATE Update
			\begin{equation}
				\bx_{t+1} = \bx_t - \eta_t\cdot \bg(\bx_t;\xi_t).
			\end{equation}
			\ENDFOR
			\STATE {\bf Output:} $\bx_T$.
		\end{algorithmic}
	\end{small}
\end{algorithm}

The stepsize is chosen after sampling \(\bu_t\) and before applying the
update. Here \(L\) and \(\mu\) are the smoothness and strong-convexity
parameters in Assumption~\ref{ass:Lmu}. This rule is the norm-normalized
analogue of the classical first-order schedule
\(\Theta(1/(\mu(t+T_0)))\) \citep{liu2023revisiting}. The normalization makes
\(\eta_t\|\bu_t\|^2=4d/[\mu(t+T_0)]\) deterministic in the descent recursion,
whereas the factor \(d\) compensates for the fact that one isotropic direction
captures only a \(1/d\) fraction of the squared gradient norm in expectation.
Because \(\|\bu_t\|^2=\mathcal O(d)\) with high probability, the realized stepsize
has the usual stochastic-approximation order. On the null event
\(\|\bu_t\|=0\), we set \(\eta_t=0\) and leave the iterate unchanged; this
convention has no effect under a Gaussian direction law.
Throughout the paper the initial point \(\bx_0\) is deterministic. A random
initialization can be handled by conditioning on it, or by replacing
\(\Delta_0\) below with a deterministic upper bound.

\begin{proposition}[Almost-sure nonvanishing gradients]
	\label{prop:nonvanishing_gradients}
	Suppose that Assumption~\ref{ass:Lmu} holds, \(d\ge2\), and
	\(\bx_0\ne\bx^*\). For every fixed finite horizon \(T\), the iterates
	generated by Algorithm~\ref{alg:SA_1} satisfy
	\[
	\mathbb P\left(
	\nabla f(\bx_t)\ne0\ \text{for all }t=0,\dots,T
	\right)
	=1.
	\]
\end{proposition}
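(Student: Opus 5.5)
The plan is to reduce the statement to a null-event computation for one step and then take a finite union bound. By Assumption~\ref{ass:Lmu}, \(f\) is \(\mu\)-strongly convex with \(\mu>0\). Hence \(\bx^*\) is the unique minimizer and \(\nabla f(\bx)=0\) if and only if \(\bx=\bx^*\). The event in the proposition therefore equals \(\{\bx_t\ne\bx^*\ \text{for all } t=0,\dots,T\}\). Since \(\bx_0\ne\bx^*\) deterministically and the horizon is finite, it suffices to show that
\[
\PP\left(\bx_t\ne\bx^*,\ \bx_{t+1}=\bx^*\right)=0
\]
for each \(t=0,\dots,T-1\). A union bound over the \(T\) steps then gives the result.

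For the one-step claim, I use the geometry of the update. On \(\{\bu_t\ne0\}\), Algorithm~\ref{alg:SA_1} gives \(\bx_{t+1}-\bx_t=-\eta_t c_t\,\bu_t\) with the scalar
\[
c_t=\frac{f(\bx_t+\alpha\bu_t;\xi_t)-f(\bx_t-\alpha\bu_t;\xi_t)}{2\alpha}.
\]
So \(\bx_{t+1}\) always lies on the line \(\{\bx_t+s\bu_t:s\in\RR\}\). On the null event \(\{\bu_t=0\}\) the convention \(\eta_t=0\) keeps \(\bx_{t+1}=\bx_t\). Now suppose \(\bx_t\ne\bx^*\) and \(\bx_{t+1}=\bx^*\). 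Then \(\bx^*-\bx_t=-\eta_tc_t\bu_t\) is nonzero, so \(\bu_t\ne0\) and \(\bu_t\in\operatorname{span}\{\bx^*-\bx_t\}\). The event in question is therefore contained in \(\{\bx_t\ne\bx^*,\ \bu_t\in\operatorname{span}(\bx^*-\bx_t)\}\). The scalar \(c_t\), and hence the sample \(\xi_t\), plays no further role.

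To show this last event has probability zero, I condition on \(\mathcal F_t=\sigma(\bu_0,\xi_0,\dots,\bu_{t-1},\xi_{t-1})\). The iterate \(\bx_t\) is \(\mathcal F_t\)-measurable, being a measurable function of past directions and samples (assuming, as is implicit, that \(f(\cdot;\xi)\) is jointly measurable). The direction \(\bu_t\sim\mathcal N(0,\bm I_d)\) is independent of \(\mathcal F_t\). On \(\{\bx_t\ne\bx^*\}\), the set \(\operatorname{span}(\bx^*-\bx_t)\) is a fixed one-dimensional subspace given \(\mathcal F_t\). Because \(d\ge2\), it has zero Lebesgue measure in \(\RR^d\), so its Gaussian measure vanishes. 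Thus the conditional probability is zero, and taking expectations (Fubini, via the freezing lemma for independent variables) gives the unconditional claim.

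The only real subtlety is this measurability/independence step: making sure \(\bx_t\) is determined by information independent of \(\bu_t\), so that the line may be treated as fixed. The dimension assumption \(d\ge2\) is exactly what is needed. When \(d=1\), every \(\bu_t\) is parallel to \(\bx^*-\bx_t\), and an exact hit of \(\bx^*\) can occur with positive probability.
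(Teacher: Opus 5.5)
Your proof is correct and is essentially the paper's argument. Strong convexity reduces the claim to \(\bx_t\ne\bx^*\), and the displacement is a scalar multiple of \(\bu_t\), so a first hit of \(\bx^*\) forces \(\bu_t\) into a line that is fixed given the pre-direction history and is therefore Gaussian-null when \(d\ge2\). Your union bound over the first hitting step is the paper's finite induction written differently, and your care with the null event \(\bu_t=0\) and the measurability of \(\bx_t\) only makes explicit what the paper leaves implicit.
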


\begin{proof}
	Because \(f\) is differentiable and strongly convex, \(\bx^*\) is the
	unique stationary point of \(f\). It is therefore enough to prove that
	\(\bx_t\ne\bx^*\) for all \(t\le T\) almost surely.
	
	The claim holds at \(t=0\) by assumption. Conditional on the history
	immediately before sampling \(\bu_t\), suppose that
	\(\bx_t\ne\bx^*\). The update displacement in
	Algorithm~\ref{alg:SA_1} is a scalar multiple of the Gaussian direction
	\(\bu_t\). Hence the identity \(\bx_{t+1}=\bx^*\) would require
	\(\bu_t\in\operatorname{span}\{\bx_t-\bx^*\}\), independently of the
	realized scalar multiplying \(\bu_t\). This is a one-dimensional
	subspace, and the conditional Gaussian law of \(\bu_t\) assigns it
	probability zero when \(d\ge2\). A finite induction over
	\(t=0,\dots,T-1\) proves the result.
\end{proof}

By Proposition~\ref{prop:nonvanishing_gradients}, the normalized gradients are
well defined on a probability-one event. On this event, define the angle ratio
\[
\zeta_t
:=
\frac{(\bu_t^\top\nabla f(\bx_t))^2}
{\|\bu_t\|^2\|\nabla f(\bx_t)\|^2}.
\]
This definition does not stop or modify the algorithm. The normalized vector
\(\nabla f(\bx_t)/\|\nabla f(\bx_t)\|\) is measurable before \(\bu_t\) is
sampled. Rotational invariance therefore gives
\(\zeta_t\mid\mathcal F_t^{-}\sim\mathrm{Beta}(1/2,(d-1)/2)\). This
conditional beta law is the sole distributional input to the angle
concentration arguments.
\subsection{Assumptions}

We impose standard smoothness and strong-convexity assumptions.
\begin{assumption}
	\label{ass:Lmu}
	The function \(f\) is \(L\)-smooth and \(\mu\)-strongly convex. That is,
	for every \(\bx,\by\in\RR^d\),
	\begin{align}
		f(\by) \leq f(\bx) + \dotprod{\nabla f(\bx), \by - \bx} + \frac{L}{2}\norm{\by - \bx}^2, \label{eq:L}\\
		f(\by) \geq f(\bx) + \dotprod{\nabla f(\bx), \by - \bx} + \frac{\mu}{2}\norm{\by - \bx}^2.
	\end{align}
\end{assumption}

Throughout, we take \(L\ge\mu\) without loss of generality: replacing \(L\)
by \(L\vee\mu\) preserves both deterministic and stochastic smoothness.

For the stochastic representation in Eq.~\eqref{eq:f_sto}, we impose the
following oracle assumptions.
\begin{assumption}
	\label{ass:est}
	For almost every \(\xi\), the stochastic objective \(f(\cdot;\xi)\) is
	\(L\)-smooth, and \((\bx,\xi)\mapsto\nabla f(\bx;\xi)\) is jointly
	measurable. We also assume that the stochastic gradient
	\(\nabla f(\bx;\xi)\) is an unbiased estimator of \(\nabla f(\bx)\):
	\[
	\EE_\xi[\nabla f(\bx;\xi)] = \nabla f(\bx),
	\qquad \bx\in\RR^d.
	\]
\end{assumption}

\begin{assumption}[Conditional sub-Gaussian stochastic-gradient noise]
	\label{ass:SubG}
	Let
	\[
	\be(\bx;\xi):=\nabla f(\bx)-\nabla f(\bx;\xi).
	\]
	Following Assumption~5B of \citet{liu2023revisiting}, suppose that the
	squared stochastic-gradient error norm satisfies a conditional
	exponential-moment bound: there exists \(\sigma>0\) such that, for every
	\(\bx\) and every \(\lambda\in(0,\sigma^{-2})\),
	\begin{equation}
		\label{eq:subg_noise_assumption}
		\EE_\xi\left[
		\exp\left(\lambda\|\be(\bx;\xi)\|^2\right)
		\right]
		\le
		\exp(\lambda\sigma^2).
	\end{equation}
\end{assumption}

\begin{proposition}[Conditional form along Algorithm~\ref{alg:SA_1}]
\label{prop:algorithm_conditional_subg}
	Suppose that Assumptions~\ref{ass:est} and~\ref{ass:SubG} hold, and
	consider Algorithm~\ref{alg:SA_1}. All independence statements below follow
	from the algorithm's sampling rule; no additional oracle assumption is
	imposed. Conditional on the past, \(\bu_t\) is independent of \(\xi_t\),
	and all future Gaussian directions and oracle samples are fresh relative to
	the current history. Consequently, the exponential-moment bound in
	Eq.~\eqref{eq:subg_noise_assumption} holds conditionally along the
	iterates.
	Let
	\(\mathcal F_t^{-}\) denote the history immediately before sampling
	\(\bu_t\), and let
	\(\mathcal F_t^{u}:=\mathcal F_t^{-}\vee\sigma(\bu_t)\) denote the history
	after sampling \(\bu_t\) but before sampling \(\xi_t\). Let
	\(\mathcal F_t^{+}\) denote the history after sampling \(\xi_t\) and
	completing the update to \(\bx_{t+1}\). Whenever the indices are in range,
	these histories satisfy
	\[
	\mathcal F_t^{-}
	\subseteq
	\mathcal F_t^{u}
	\subseteq
	\mathcal F_t^{+}
	\subseteq
	\mathcal F_{t+1}^{-}
	\subseteq
	\mathcal F_{t+1}^{u}.
	\]
	If
	\(\be_t:=\nabla f(\bx_t)-\nabla f(\bx_t;\xi_t)\), then, for either choice
	\(\mathcal G_t\in\{\mathcal F_t^{-},\mathcal F_t^{u}\}\),
	\[
		\EE[\be_t\mid\mathcal G_t]=0,
		\qquad
		\EE\left[
		\exp\left(\lambda\|\be_t\|^2\right)
		\middle|\mathcal G_t
		\right]
		\le
		\exp(\lambda\sigma^2),
		\qquad 0<\lambda<\sigma^{-2}.
	\]
	The pre-direction form is used when the Gaussian direction is integrated
	out in the quadratic-noise estimate, whereas the post-direction form is used
	to establish conditional sub-Gaussian projection bounds.
\end{proposition}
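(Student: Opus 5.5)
The plan is to formalize the filtrations and then use a freezing (substitution) argument for the fresh sample \(\xi_t\). First I will set up the probability space so that \(\bx_0\) is deterministic and \(\{\bu_t\}_{t\ge0}\), \(\{\xi_t\}_{t\ge0}\) are mutually independent. The Gaussian directions are i.i.d.\ \(\mathcal N(0,\bm I_d)\), and the oracle samples are i.i.d.\ with the law of \(\xi\) in Eq.~\eqref{eq:f_sto}. I then define
\[
\mathcal F_t^{-}:=\sigma(\bu_0,\xi_0,\dots,\bu_{t-1},\xi_{t-1}),\qquad
\mathcal F_t^{u}:=\mathcal F_t^{-}\vee\sigma(\bu_t),\qquad
\mathcal F_t^{+}:=\mathcal F_t^{u}\vee\sigma(\xi_t).
\]
By construction \(\mathcal F_t^{+}=\mathcal F_{t+1}^{-}\), so the chain of inclusions is immediate.

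Next, I will show by induction that \(\bx_t\) is \(\mathcal F_t^{-}\)-measurable. The update is \(\bx_{t+1}=\Phi_t(\bx_t,\bu_t,\xi_t)\) for a jointly measurable map \(\Phi_t\). Measurability of \(\Phi_t\) comes from the joint measurability of \((\bx,\xi)\mapsto f(\bx;\xi)\) implicit in the oracle, together with the measurable stepsize rule, which sets \(\eta_t=0\) on \(\{\bu_t=0\}\). Mutual independence of the sequences gives that \(\xi_t\) is independent of \(\mathcal F_t^{u}\), and hence also of \(\mathcal F_t^{-}\subseteq\mathcal F_t^{u}\). This independence is exactly the statement that \(\bu_t\) and \(\xi_t\) are conditionally independent given the past, and that future draws are fresh.

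The key step is the freezing lemma. Let \(\mathcal G_t\) be either \(\mathcal F_t^{-}\) or \(\mathcal F_t^{u}\). Then \(\bx_t\) is \(\mathcal G_t\)-measurable and \(\xi_t\) is independent of \(\mathcal G_t\). Hence, for any nonnegative jointly measurable \(h\),
\[
\EE[h(\bx_t,\xi_t)\mid\mathcal G_t]=H(\bx_t),\qquad H(\bx):=\EE_\xi[h(\bx,\xi)].
\]
Applying this with \(h(\bx,\xi)=\exp(\lambda\|\be(\bx;\xi)\|^2)\) and using Assumption~\ref{ass:SubG} pointwise in \(\bx\) gives the exponential-moment bound \(\le\exp(\lambda\sigma^2)\).

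For the mean-zero claim, I first need integrability. Since \(\EE_\xi\exp(\lambda\|\be\|^2)<\infty\), we have \(\EE_\xi\|\be(\bx;\xi)\|<\infty\) for every \(\bx\), so the conditional expectation \(\EE[\be_t\mid\mathcal G_t]\) is well defined. Its integrability can also be checked unconditionally through the tower property, since \(\EE\|\be_t\|\le\EE[\EE(\|\be_t\|\mid\mathcal G_t)]\le C(\sigma)\). I then apply the freezing lemma componentwise to the positive and negative parts of \(\be\). Assumption~\ref{ass:est} gives \(\EE_\xi[\be(\bx;\xi)]=0\) for each \(\bx\), and substituting \(\bx=\bx_t\) yields \(\EE[\be_t\mid\mathcal G_t]=0\).

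The main obstacle is measure-theoretic rather than probabilistic. One must ensure that the substitution \(\bx=\bx_t\) is legitimate, which requires joint measurability of \((\bx,\xi)\mapsto\be(\bx;\xi)\) and of the update map. One must also ensure that \(\xi_t\) remains independent of \(\mathcal F_t^{u}\) even though \(\mathcal F_t^{u}\) contains \(\bu_t\); this follows from mutual independence of the whole family rather than from pairwise independence. Once these two points are checked, both conditional statements follow by the same one-line freezing computation.
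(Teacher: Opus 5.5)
Your proposal is correct and follows the same route as the paper: $\bx_t$ is $\mathcal F_t^{-}$-measurable, $\xi_t$ is a fresh sample independent of $\mathcal F_t^{u}\supseteq\mathcal F_t^{-}$, and Assumptions~\ref{ass:est} and~\ref{ass:SubG} are then applied pointwise at the realized $\bx_t$. Your version also makes explicit what the paper's short proof leaves implicit: the freezing (substitution) lemma, the joint measurability of $(\bx,\xi)\mapsto\be(\bx;\xi)$ and of the update map, and the integrability of $\be_t$ needed for the conditional mean.
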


\begin{proof}
	The iterate \(\bx_t\) is \(\mathcal F_t^{-}\)-measurable, and
	Algorithm~\ref{alg:SA_1} samples \(\bu_t\) and a fresh oracle sample
	\(\xi_t\) independently at iteration \(t\). Hence conditioning on either
	\(\mathcal F_t^{-}\) or \(\mathcal F_t^{u}\) fixes \(\bx_t\) (and, in the
	latter case, \(\bu_t\)) while leaving \(\xi_t\) distributed as a fresh
	oracle sample. Assumption~\ref{ass:est} gives the conditional mean-zero
	identity, and Assumption~\ref{ass:SubG}, applied pointwise at the realized
	\(\bx_t\), gives the displayed conditional exponential-moment bound.
\end{proof}

Assumption~\ref{ass:est} is standard. Assumption~\ref{ass:SubG} is the
full-vector conditional exponential-moment condition used by
\citet{liu2023revisiting}; it is weaker than almost-sure bounded noise but
stronger than coordinatewise tail control. Proposition~\ref{prop:algorithm_conditional_subg}
transfers this oracle condition to the filtrations generated by
Algorithm~\ref{alg:SA_1}. We do not truncate the noise or condition on a
bounded-noise event. Instead, a conditional sub-Gaussian projection inequality
controls the signed linear term, and the exponential moment of
\(\|\be_t\|^2\) controls the quadratic term. Constants such as
\(C_{\mathrm{sg}}\), \(C_Y\), and \(C_{\mathrm{abs}}\) denote universal
numerical factors and do not alter the displayed rate or its confidence
scaling.

Assumption~\ref{ass:SubG} includes bounded stochastic-gradient noise and also
allows unbounded light-tailed models satisfying the stated exponential-moment
condition. Examples include stochastic quadratic objectives with random linear
terms, finite-support simulation models, and empirical-risk objectives with the
same full-vector tail control. Using the same sample in both function
evaluations is the common-random-numbers construction familiar in simulation
optimization. It removes the additional differencing noise generated by
independent samples and leaves \(\be(\bx;\xi)\) as the stochastic error to be
controlled.

When conditional Orlicz notation is used, for a sigma-field \(\mathcal H\) and
\(p\in\{1,2\}\) we write
\[
	\|X\|_{\psi_p\mid\mathcal H}
	:=
	\inf\left\{
	a>0:
	\EE\left[
	\exp\left(\frac{|X|^p}{a^p}\right)
	\middle|\mathcal H
	\right]\le2\ \text{a.s.}
	\right\},
\]
with the usual convention that the infimum is \(+\infty\) if the set is empty.
Changing the numerical constant \(2\) only changes universal constants.

\section{Main Results}

We state the finite-horizon last-iterate guarantee for
Algorithm~\ref{alg:SA_1}. The finite-horizon theorem keeps the product-weight
factor \(c_\rho=\exp\{O(\Gamma_T(\delta)/T_0)\}\) explicit. In particular, when
\(T_0\gtrsim\Gamma_T(\delta)\), the norm-normalized two-point Gaussian method
satisfies
\[
	f(\bx_T)-f(\bx^*)
	=
	\widetilde{\mathcal O}\!\left(\frac{d}{T}\right)
\]
with probability at least \(1-\delta\), after fixed problem parameters and
logarithmic factors are suppressed. This is the high-probability last-iterate
analogue of the classical expected zeroth-order rate.

Throughout this section, Algorithm~\ref{alg:SA_1} is run with the displayed
choices of \(T_0\), \(\alpha\), and \(\eta_t\). These parameters satisfy the
stability condition \(\eta_t\le1/(8L\norm{\bu_t}^2)\). For compactness, define
\[
	T_k:=k+T_0,
	\qquad
	\Lambda:=\log(T+T_0),
	\qquad
	J_T
	:=
	1+
	\left\lceil
	\log_2
	\left(
	\frac{T(T+T_0)}{T_0}
	\right)
	\right\rceil,
\]
write
\[
	\Delta_0:=f(\bx_0)-f(\bx^*),
\]
and define the stitched confidence factor
\[
	\Gamma_T(\delta)
	:=
	1+\log\frac1\delta+\log J_T+\log(e+T_0),
	\qquad
	c_\delta:=\frac{\Gamma_T(\delta)}{\Lambda},
	\qquad
	\Theta_T:=\Lambda\Gamma_T(\delta).
\]
Let \(C>0\) be the universal constant in
Lemma~\ref{lem:weighted_scan_bounds}, and set
\[
	c_\rho
	:=
	2\exp\left(
	\frac{2C}{T_0}\Gamma_T(\delta)
	\right).
\]
Here \(c_\rho\) is the suffix constant supplied by the weighted scan,
\(c_\delta\Lambda=\Gamma_T(\delta)\) collects the confidence logarithms, and
\(\Theta_T\) is the induction scale.

\begin{theorem}[High-probability last-iterate rate]
	\label{thm:main_1}
	Let \(T\ge1\), \(0<\delta<1\), \(d\ge2\), and let the deterministic initial
	point satisfy \(\bx_0\ne\bx^*\). Suppose that \(f\) satisfies
	Assumption~\ref{ass:Lmu}, that the stochastic objectives \(f(\cdot;\xi)\)
	satisfy Assumption~\ref{ass:est}, and that the conditional sub-Gaussian
	noise condition in Assumption~\ref{ass:SubG} holds. Run
	Algorithm~\ref{alg:SA_1} for \(T\) iterations, using the same sample
	\(\xi_t\) in both function evaluations at iteration \(t\).
	Let \(C_{\mathrm{abs}}\ge1\) be a universal numerical constant chosen
	sufficiently large relative to the constants in the auxiliary concentration
	inequalities and Lemma~\ref{lem:time_uniform_controls}. Define
	\begin{equation}\label{eq:cC}
		\begin{aligned}
			\cC := \max\Bigg\{
			&\frac{8c_\rho T_0\Delta_0}{d\Theta_T},
			\frac{C_{\mathrm{abs}}L\sigma^2c_\rho^2}{\mu^2}
			\left(1+\frac{c_\delta}{T_0}\right),
			\frac{C_{\mathrm{abs}}Lc_\rho\sigma^2}{\mu^2\Gamma_T(\delta)}
			\left(1+\frac{dc_\delta}{T_0}\right),
			\\
			&
			\frac{C_{\mathrm{abs}}c_\rho(1+c_\delta)}{\Gamma_T(\delta)}
			\left(
			1+\frac{L^2}{\mu}+\frac{L^3}{\mu^2}
			\right)
			\Bigg\}.
		\end{aligned}
	\end{equation}
	For \(0\le K\le T\), define
	\begin{equation}
		\cE_{\Delta_K}
		:=
		\left\{
		f(\bx_k)-f(\bx^*)
		\le
		\frac{\cC d\Theta_T}{k+T_0}
		\quad\text{for all } k=0,\dots,K
		\right\}.
	\end{equation}
	Then, with probability at least \(1-\delta\), the events
	\(\cE_{\Delta_K}\) hold simultaneously for all \(K\le T\). In particular,
	\begin{equation}
		\label{eq:improved_rate_weighted}
		f(\bx_T)-f(\bx^*)
		\le
		\cC\,
		\frac{d\Theta_T}{T+T_0}
		=
		\mathcal O\left(
		\frac{c_\rho T_0\Delta_0}{T+T_0}
		+
		c_\rho^2
		\frac{d\Lambda\Gamma_T(\delta)}{T+T_0}
		\left(1+\frac{\Gamma_T(\delta)}{T_0}\right)
		\right).
	\end{equation}
\end{theorem}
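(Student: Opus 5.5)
We will prove the theorem by an induction over $K$ carried out on a single high-probability event, assembled from the time-uniform controls of Lemma~\ref{lem:time_uniform_controls} and the weighted scans of Lemma~\ref{lem:weighted_scan_bounds}.

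\emph{One-step descent.} Write $\Delta_t:=f(\bx_t)-f(\bx^*)$ and decompose the estimator as
\[
\bg(\bx_t;\xi_t)=\bu_t\bu_t^\top\nabla f(\bx_t)-\bu_t\bu_t^\top\be_t+\bm{b}_t\bu_t .
\]
Here $|\bm{b}_t|\le L\alpha\|\bu_t\|^2/2$, which follows from $L$-smoothness of $f(\cdot;\xi_t)$ (Assumption~\ref{ass:est}). Apply the descent inequality \eqref{eq:L} together with the stability condition $\eta_t\le 1/(8L\|\bu_t\|^2)$, and use that $\eta_t\|\bu_t\|^2=4d/(\mu T_t)$ is deterministic. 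This should give
\[
\Delta_{t+1}\le\Bigl(1-\tfrac{4d\zeta_t}{T_t}\Bigr)\Delta_t+\frac{4d}{\mu T_t}\,\frac{\langle\nabla f(\bx_t),\bu_t\rangle\,\bu_t^\top\be_t}{\|\bu_t\|^2}+\frac{c\,dL}{\mu^2T_t^2}\Bigl(\frac{(\bu_t^\top\be_t)^2}{\|\bu_t\|^2}+L^2\alpha^2\|\bu_t\|^2\Bigr).
\]
To obtain the factor $(1-4d\zeta_t/T_t)$ we use $\mu$-strong convexity in the form $\|\nabla f\|^2\ge2\mu\Delta_t$, and we absorb part of the quadratic term via $\eta_t L\le1/8$. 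We then unroll this recursion from $0$ to $K$, which produces three contributions.

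\emph{Initial and nonnegative terms.} The first contribution is the initial term $\Delta_0\prod_{t<K}(1-4d\zeta_t/T_t)$. The second is the nonnegative quadratic and smoothing terms, each weighted by a suffix product $P_{k+1:K}:=\prod_{k<t<K}(1-4d\zeta_t/T_t)$. The third is the signed sum $\sum_k P_{k+1:K}M_k$, where $M_k$ is the linear noise term.

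The weighted lower scan of Lemma~\ref{lem:weighted_scan_bounds} provides a bound holding simultaneously for all suffixes, on an event of probability $1-\delta/3$:
\[
\sum_{k<t<K}\frac{\zeta_t}{T_t}\ \ge\ \frac1d\log\frac{T_K}{T_{k+1}}-\frac{C\Gamma_T(\delta)}{T_0}.
\]
Since $\log(1-x)\le -x$, this yields
\[
P_{k+1:K}\le c_\rho\Bigl(\frac{T_{k+1}}{T_K}\Bigr)^{4}.
\]
This handles the initial term, giving $c_\rho\Delta_0(T_0/T_K)^4\le c_\rho T_0\Delta_0/T_K$. It also handles the nonnegative terms. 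For these, the terminal concentration estimates from Lemma~\ref{lem:time_uniform_controls} apply Assumption~\ref{ass:SubG} through Proposition~\ref{prop:algorithm_conditional_subg}: conditioning on $\mathcal F_t^-$, $(\bu_t^\top\be_t)^2/\|\bu_t\|^2$ has exponential moments of scale $\sigma^2/d$. Consequently the weighted sum $\sum_k T_{k+1}^4T_k^{-2}(\cdot)$ concentrates at scale $\sigma^2T_K^3(1+c_\delta/T_0)$ up to logarithms. With $\alpha^2=1/(d(T+T_0))$, the smoothing term contributes $O(L^3\Lambda/(\mu^2T_K))$. These give the second and fourth entries of $\cC$.

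\emph{Signed term: the main obstacle.} We must bound $\sum_kP_{k+1:K}M_k$ without taking absolute values. Our approach is to rewrite it through the backward products, $W_K\sum_k M_k/W_{k+1}$ with $W_j=\prod_{t<j}(1-4d\zeta_t/T_t)$. Because $M_k$ is conditionally mean-zero given $\mathcal F_k^u$, this is a martingale whose increments are sub-Gaussian with variance proxy of order $d^2\zeta_k\|\nabla f(\bx_k)\|^2\sigma^2/(\mu^2T_k^2W_{k+1}^2)$. On $\cE_{\Delta_k}$ we have $\|\nabla f(\bx_k)\|^2\le2L\cC d\Theta_T/T_k$. The upper scan bounds the aggregated $\zeta$-weights, and the lower scan bounds the ratios $W_K/W_{k+1}$.

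To control this martingale uniformly we use a stitched Freedman/product-martingale boundary. The stitching runs over $J_T$ dyadic shells of the variance proxy, which accounts for the $\log J_T$ term inside $\Gamma_T$, and it holds with probability $1-\delta/3$. The resulting bound on $\cE_{\Delta_{K-1}}$ is
\[
O\Bigl(\sqrt{\cC\, L\sigma^2c_\rho^2\,d^2\Theta_T\Gamma_T(\delta)/(\mu^2T_K^2)}\Bigr)+(\text{lower order in }dc_\delta/T_0).
\]
Applying AM--GM, this is at most $\tfrac14\cC d\Theta_T/T_K$ plus terms that the third entry of $\cC$ absorbs.

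The delicate part is the order of operations. Because the variance proxy involves the random weights, it must only be bounded after the martingale inequality has been applied. Moreover, the induction hypothesis enters through a stopped, predictable bound: we cap the increments by the indicator of $\cE_{\Delta_{k}}$, which is $\mathcal F_k^-$-measurable.

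\emph{Conclusion.} Summing the four pieces, each is at most $\tfrac14\cC d\Theta_T/T_K$ by the definition of $\cC$, which closes the induction $\cE_{\Delta_{K-1}}\Rightarrow\cE_{\Delta_K}$. The base case holds because $\Delta_0\le\cC d\Theta_T/T_0$. A union bound over the three events gives total probability at least $1-\delta$. Finally, \eqref{eq:improved_rate_weighted} follows by expanding the $\max$ defining $\cC$.
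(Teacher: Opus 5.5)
Your architecture is the paper's: unroll the recursion, use the lower scan to get deterministic suffix envelopes, write the signed sum as $W_K\sum_k M_k/W_{k+1}$, apply a stitched boundary in the intrinsic variance, bound that variance via the induction hypothesis and the prefix upper scan, and close the induction. Two of your choices differ from the paper but are valid. You work in the natural post-direction filtration $\mathcal F_k^{u}$, where $W_{k+1}$ is already measurable, instead of the paper's angle-enlarged filtration. You truncate by the predictable indicator $\mathbf 1_{\cE_{\Delta_k}}$ instead of using the implication event $\{A_K^\Pi\le\bar A_K\}$. However, two steps fail as written.

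\emph{First gap: the one-step inequality.} It places all smoothing bias at order $T_t^{-2}$. The cross term $-\eta_t\beta_t\,\bu_t^\top\nabla f(\bx_t)$ is first order in $\eta_t$. After Young's inequality it leaves $\Theta(\eta_t\beta_t^2)\asymp dL^2\alpha^2\|\bu_t\|^2/(\mu T_t)$, with one power of $T_t$, and no $O(T_t^{-2})$ term absorbs it for large $t$. Also $2L\eta_t^2\|\bu_t\|^2=32Ld^2/(\mu^2T_t^2\|\bu_t\|^2)$, so the quadratic-noise prefactor carries $d^2$, not $d$. Both are repairable. With $\alpha^2=1/(dT_T)$, the first-order bias sums to $O(c_\rho dL^2/(\mu T_K))$ up to logarithms; this is what the $L^2/\mu$ in the fourth entry of $\cC$ pays for. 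The quadratic noise gives $O(c_\rho Ld\sigma^2\Lambda/(\mu^2T_K))$ plus a deviation term (third entry). That deviation term has no $1/d$, because Lemma~\ref{lem:subg_random_direction_square} has sub-exponential scale $\sigma^2$; only the mean is $\sigma^2/d$.

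\emph{Second gap: uniformity in $K$.} The induction needs the nonnegative sums controlled at \emph{every} $K\le T$ on one event, but you invoke only terminal estimates. With your envelope $(T_{k+1}/T_K)^4$, the transfer from $T$ to $K$ breaks. The weights satisfy $w_{K,k}=(T_T/T_K)^4w_{T,k}$, while the terminal sum is dominated by indices near $T$, so you lose $(T_T/T_K)^3$. The paper's comparison $\rho_{K,k}\le C(T_T/T_K)\rho_{T,k}$ works because its envelope has exponent one. Then $\sum_{k<T}\rho_{T,k}T_k^{-2}\lesssim c_\rho\Lambda/T_T$ is dominated by early indices, which is where the factor $\Lambda$ in $\Theta_T$ comes from. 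You have two fixes.
\begin{enumerate}
\item[(a)] Drop to exponent one and use that comparison.
\item[(b)] Union-bound over $K$. This is affordable here: $\log(T/\delta)\lesssim\Theta_T$ multiplies only the $w_{\max}$ deviation terms, which carry an extra factor $O(d/T_0)$ with $d/T_0\le\mu/(32L)$.
\end{enumerate}

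\emph{Related slip: the scan.} Exponent four does not follow from Lemma~\ref{lem:weighted_scan_bounds}. Its lower scan has coefficient $1/(2d)$ and error $C\Gamma_T(\delta)/(dT_0)$. Your error term lacks the $1/d$, so it would give $e^{4dC\Gamma_T(\delta)/T_0}$ rather than a power of $c_\rho$. With your $4d$ contraction, the correct lemma gives $\tfrac{c_\rho^2}{4}(T_{k+1}/T_K)^2$. Use $\min\{1,\cdot\}\le\tfrac{c_\rho}{2}T_{k+1}/T_K$ to keep a single $c_\rho$ in the first two entries of $\cC$. With that envelope, the prefix upper scan puts a factor $\Lambda$ into your signed-term variance, which your display omits. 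This is harmless, since $\Lambda\Gamma_T(\delta)=\Theta_T$, and the resulting requirement is exactly the second entry of $\cC$.
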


Theorem~\ref{thm:main_1} yields the following fixed-horizon convergence rate.

\begin{corollary}[Simplified high-probability convergence rate]
	\label{cor:main_1}
	Let the assumptions and parameter choices of Theorem~\ref{thm:main_1}
	hold. If \(T_0\gtrsim\Gamma_T(\delta)\), so that \(c_\rho\) is bounded by
	a universal numerical constant, then with probability at least
	\(1-\delta\),
	\begin{equation}
	\label{eq:simplified_rate}
		f(\bx_T)-f(\bx^*)
		\le
		C
		\left[
		\frac{T_0\Delta_0}{T+T_0}
		+
		\frac{d\,\Lambda\Gamma_T(\delta)}{T+T_0}
		\left(1+\frac{\Gamma_T(\delta)}{T_0}\right)
		\right],
	\end{equation}
	where \(C\) depends only on universal numerical constants and the fixed
	parameters in Eq.~\eqref{eq:cC}. Suppressing these parameters and logarithmic
	factors, the last iterate satisfies
	\[
		f(\bx_T)-f(\bx^*)
		=
		\widetilde{\mathcal O}\!\left(\frac{d}{T}\right).
	\]
\end{corollary}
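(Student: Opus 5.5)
The statement to prove is Corollary~\ref{cor:main_1}. It should follow from Theorem~\ref{thm:main_1} by bounding each of the four entries of \(\cC\) in Eq.~\eqref{eq:cC} under the assumption \(T_0\gtrsim\Gamma_T(\delta)\). The work is bookkeeping: track how each entry scales once \(c_\rho\) is a constant, then substitute into Eq.~\eqref{eq:improved_rate_weighted}.

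\emph{Step 1: \(c_\rho\) is bounded.} Suppose \(T_0\ge \Gamma_T(\delta)\) up to a universal factor. Then \(2C\Gamma_T(\delta)/T_0\) is at most a universal constant, so
\[
c_\rho=2\exp\{2C\Gamma_T(\delta)/T_0\}\le c_0
\]
for a universal \(c_0\). If we take \(T_0\ge 2C\Gamma_T(\delta)\) exactly, then \(c_\rho\le 2e\).

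\emph{Step 2: bound \(\cC\,d\Theta_T\) entry by entry.} Use \(c_\delta\Lambda=\Gamma_T(\delta)\) and \(\Theta_T=\Lambda\Gamma_T(\delta)\).

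\begin{itemize}
\item The first entry gives
\[
\frac{8c_\rho T_0\Delta_0}{d\Theta_T}\, d\Theta_T = 8c_\rho T_0\Delta_0,
\]
which is the \(T_0\Delta_0\) term.

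\item The second entry gives
\[
\frac{L\sigma^2}{\mu^2}\,c_\rho^2\,\bigl(1+c_\delta/T_0\bigr)\,d\Lambda\Gamma_T(\delta).
\]
Since \(\Lambda\ge\log(1+T_0)\) is bounded below by a positive constant, \(c_\delta\lesssim\Gamma_T(\delta)\). Hence \(1+c_\delta/T_0\lesssim 1+\Gamma_T(\delta)/T_0\).

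\item The third entry gives
\[
\frac{L\sigma^2}{\mu^2}\,c_\rho\,\bigl(1+dc_\delta/T_0\bigr)\,d\Lambda.
\]
Since \(T_0=32dL/\mu\ge 32d\), we have \(dc_\delta/T_0\le c_\delta/32\). Then
\[
d\Lambda\,\bigl(1+c_\delta/32\bigr)\lesssim d\Lambda+d\,\Gamma_T(\delta)\lesssim d\Lambda\Gamma_T(\delta),
\]
using \(\Gamma_T(\delta)\ge1\) and \(\Lambda\gtrsim1\).

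\item The fourth entry gives
\[
c_\rho\,(1+c_\delta)\Bigl(1+\tfrac{L^2}{\mu}+\tfrac{L^3}{\mu^2}\Bigr)\,d\Lambda.
\]
Here \((1+c_\delta)\Lambda=\Lambda+\Gamma_T(\delta)\le 2\Lambda\Gamma_T(\delta)\).
\end{itemize}

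Taking the maximum, which is at most the sum of the four bounds, yields
\[
\cC\, d\Theta_T\le C\Bigl[T_0\Delta_0+d\Lambda\Gamma_T(\delta)\bigl(1+\Gamma_T(\delta)/T_0\bigr)\Bigr],
\]
where \(C\) absorbs \(c_0\), \(C_{\mathrm{abs}}\), \(L\), \(\mu\) and \(\sigma\). Dividing by \(T+T_0\) and invoking Theorem~\ref{thm:main_1} on its probability-\((1-\delta)\) event gives Eq.~\eqref{eq:simplified_rate}.

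\emph{Step 3: the \(\widetilde{\mathcal O}(d/T)\) form.} Under \(T_0\gtrsim\Gamma_T(\delta)\), the factor \(1+\Gamma_T(\delta)/T_0\) is \(O(1)\). Also \(T_0\Delta_0=32dL\Delta_0/\mu\), which is \(d\) times fixed parameters. Moreover \(\Lambda\Gamma_T(\delta)\) is polylogarithmic in \(T\), \(T_0\) and \(1/\delta\), since \(\log J_T=O(\log\log(T+T_0))\). Finally \(1/(T+T_0)\le 1/T\). Together these give the stated \(\widetilde{\mathcal O}(d/T)\).

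The only step needing care is the third entry, where an extra factor \(d\) appears in \(dc_\delta/T_0\). It is absorbed precisely because the chosen \(T_0\) is proportional to \(d\). I expect no genuine obstacle beyond this.
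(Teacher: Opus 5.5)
Your proposal is correct and follows the paper's route. The paper's proof starts from the big-\(\mathcal O\) form in Eq.~\eqref{eq:improved_rate_weighted} and absorbs \(c_\rho=\mathcal O(1)\); your Step~2 spells out the entry-by-entry bound on the maximum in Eq.~\eqref{eq:cC} that the paper compresses into that display, including the third entry via \(T_0\ge32d\) (from \(L\ge\mu\)) and \(c_\delta\le\Gamma_T(\delta)\) via \(\Lambda\ge1\).
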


\begin{corollary}[High-probability oracle complexity for admissible horizons]
	\label{cor:oracle_complexity}
	Under the assumptions and parameter choices of
	Corollary~\ref{cor:main_1}, let \(\varepsilon>0\). Suppose that an admissible
	integer horizon \(T\) exists satisfying the clean-rate compatibility
	condition
	\[
		\Gamma_T(\delta)\lesssim T_0=\frac{32dL}{\mu}
	\]
	and
	\[
		T+T_0
		\ge
		\frac{C}{\varepsilon}
		\left[
		T_0\Delta_0
		+
		d\,\Lambda\Gamma_T(\delta)
		\left(1+\frac{\Gamma_T(\delta)}{T_0}\right)
		\right].
	\]
	Then \(f(\bx_T)-f(\bx^*)\le\varepsilon\) with probability at least
	\(1-\delta\). Thus, for accuracy levels admitting such a horizon, the
	iteration complexity is \(\widetilde{\mathcal O}(d/\varepsilon)\) after
	fixed problem parameters and logarithmic factors are suppressed. This is
	also the number of two-point oracle calls; the number of individual function
	evaluations is twice as large.
\end{corollary}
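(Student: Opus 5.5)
The statement to prove is Corollary~\ref{cor:oracle_complexity}. I would derive it directly from Corollary~\ref{cor:main_1}, with no new probabilistic argument. Fix \(\varepsilon>0\), \(\delta\in(0,1)\), and an admissible horizon \(T\) satisfying the two displayed conditions. The first condition, \(\Gamma_T(\delta)\lesssim T_0=32dL/\mu\), is exactly the clean-rate hypothesis of Corollary~\ref{cor:main_1}. Under it the exponent in \(c_\rho=2\exp\{2C\Gamma_T(\delta)/T_0\}\) is bounded by a universal constant, so \(c_\rho\) is bounded by a universal constant. Hence, on an event of probability at least \(1-\delta\), the bound Eq.~\eqref{eq:simplified_rate} holds with a constant \(C\) depending only on universal constants and the fixed parameters in Eq.~\eqref{eq:cC}.

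I would take the \(C\) in the second displayed condition of Corollary~\ref{cor:oracle_complexity} to be this same constant. Rewriting Eq.~\eqref{eq:simplified_rate} gives
\[
f(\bx_T)-f(\bx^*)\le \frac{C}{T+T_0}\left[T_0\Delta_0+d\,\Lambda\Gamma_T(\delta)\left(1+\frac{\Gamma_T(\delta)}{T_0}\right)\right].
\]
The assumed lower bound on \(T+T_0\) makes the right-hand side at most \(\varepsilon\), and this holds on the same event. That proves the probabilistic claim.

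For the complexity statement, note that \(\Gamma_T(\delta)\lesssim T_0\) gives \(1+\Gamma_T(\delta)/T_0=O(1)\). Also \(T_0\Delta_0=O(d\,L\Delta_0/\mu)\). So the bracket is \(O\big(d(L\Delta_0/\mu+\Lambda\Gamma_T(\delta))\big)\), and the required \(T\) is \(O\big(d\Lambda\Gamma_T(\delta)/\varepsilon\big)\) up to fixed parameters, that is, \(\widetilde{\mathcal O}(d/\varepsilon)\). There is a mild circularity, since \(\Lambda\) and \(\Gamma_T(\delta)\) depend on \(T\) only logarithmically. This is why the statement assumes an admissible \(T\) exists rather than constructing one; no fixed-point argument is needed.

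Finally, each iteration of Algorithm~\ref{alg:SA_1} makes one two-point query, namely two function evaluations with a common \(\xi_t\). So the oracle-call count equals \(T\), and the function-evaluation count is \(2T\). The only step requiring care is the bookkeeping: using the same constant \(C\) in the horizon condition as in Eq.~\eqref{eq:simplified_rate}, and verifying that the compatibility condition simultaneously bounds \(c_\rho\) and the factor \(1+\Gamma_T(\delta)/T_0\).
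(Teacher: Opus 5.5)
Your proposal is correct and follows the paper's route. You invoke Corollary~\ref{cor:main_1} under the compatibility condition, which keeps $c_\rho$ and $1+\Gamma_T(\delta)/T_0$ bounded; you use the lower bound on $T+T_0$ to make the right-hand side of Eq.~\eqref{eq:simplified_rate} at most $\varepsilon$ on the same probability-$(1-\delta)$ event; and you count one two-point query, i.e.\ two function evaluations, per iteration.
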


\begin{proof}
	The displayed lower bound on \(T+T_0\) makes the right-hand side of
	Eq.~\eqref{eq:simplified_rate} no larger than \(\varepsilon\). The
	compatibility condition \(\Gamma_T(\delta)\lesssim T_0\) ensures that
	Corollary~\ref{cor:main_1} applies at the selected horizon, so its
	probability guarantee carries over directly.
	Because \(T_0=\Theta(dL/\mu)\) and the remaining dependence on \(T\) and
	\(\delta\) enters logarithmically through \(\Lambda\) and
	\(\Gamma_T(\delta)\), the bound yields
	\(\widetilde{\mathcal O}(d/\varepsilon)\) iterations whenever an admissible
	horizon exists, after fixed problem parameters and logarithmic factors are
	suppressed. Each iteration makes one two-point oracle
	query, that is, two function evaluations using the same sample.
\end{proof}

\begin{remark}[Interpretation of the corollaries]
	Corollary~\ref{cor:main_1} is a fixed-horizon result, whereas
	Corollary~\ref{cor:oracle_complexity} expresses the same inequality as an
	\(\varepsilon\)-accuracy statement. The confidence level enters only through
	\(\Gamma_T(\delta)\), in contrast to the polynomial \(1/\delta\) loss from
	a Markov conversion. Theorem~\ref{thm:main_1} itself keeps the product-weight
	factor \(c_\rho=\exp\{O(\Gamma_T(\delta)/T_0)\}\) explicit. The simplifying
	condition \(\Gamma_T(\delta)\lesssim T_0=32dL/\mu\) is used only to keep this
	factor universal in Corollary~\ref{cor:main_1}; the oracle-complexity
	corollary is stated for horizons satisfying the same clean-rate condition.
	Consequently, the \(\widetilde{\mathcal O}(d/\varepsilon)\) interpretation is
	conditional, not a uniform fixed-\(d\) complexity statement as
	\(\varepsilon\downarrow0\). For fixed \(d\) and \(\delta\), the admissible
	range is finite because \(\Gamma_T(\delta)\) grows logarithmically with
	\(T\), although the range can be extremely large when \(T_0=32dL/\mu\) is
	large.
	Finally, the notation \(\widetilde{\mathcal O}(d/T)\) treats the full-vector
	noise scale \(\sigma\) as a fixed problem parameter. Any dimension dependence
	of \(\sigma\) remains explicit through the constants in
	Theorem~\ref{thm:main_1}.
\end{remark}

\section{Convergence Analysis of Zeroth-Order Stochastic Gradient Descent}
\label{sec:zsgd}

This section proves Theorem~\ref{thm:main_1}. We first derive a one-step
descent inequality, then unroll it into weighted suffix terms and control each
term on a common high-probability event.

\begin{lemma}
\label{lem:dec_1}
Let \(f\) be \(L\)-smooth and \(\mu\)-strongly convex, and let
\(f(\cdot;\xi)\) be \(L\)-smooth. Assume \(\nabla f(\bx_t)\ne0\). If
\[
	\eta_t\le \frac{1}{8L\|\bu_t\|^2},
\]
then
\begin{equation}
	\label{eq:dec_1}
	\begin{aligned}
		\Delta_{t+1}
		\leq&
		\left(1 - \frac{\mu\eta_t}{2}
		\frac{(\bu_t^\top\nabla f(\bx_t))^2}{\|\nabla f(\bx_t)\|^2}
		\right)\cdot \Delta_t
		+
	\eta_t \dotprod{\nabla f(\bx_t), \bu_t\bu_t^\top \be_t}
	\\
	&
	+
	2L\eta_t^2\norm{\bu_t}^2|\bu_t^\top \be_t|^2
	+ \Delta_{\alpha, t}
	\end{aligned}
\end{equation}
where \(\Delta_t\), \(\be_t\), and \(\Delta_{\alpha,t}\) are defined by
\begin{equation}
\label{eq:def}
\Delta_t := f(\bx_t)-f(\bx^*),
\qquad
\be_t := \nabla f(\bx_t)-\nabla f(\bx_t;\xi_t),
\qquad
\Delta_{\alpha,t}
:=
\frac{\eta_t\beta_t^2}{2}
+L\eta_t^2\beta_t^2\norm{\bu_t}^2.
\end{equation}
Here \(\beta_t\) is the finite-difference bias coefficient defined by
\[
\bg(\bx_t;\xi_t)
=
\bu_t\bu_t^\top\nabla f(\bx_t;\xi_t)+\beta_t\bu_t,
\qquad
|\beta_t|\le \frac{L\alpha}{2}\|\bu_t\|^2.
\]
\end{lemma}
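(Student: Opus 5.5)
The plan is a single application of the descent lemma for the \(L\)-smooth objective \(f\) along the update \(\bx_{t+1}=\bx_t-\eta_t\bg(\bx_t;\xi_t)\), after splitting the estimator into its exact-gradient, noise and bias parts. First I will justify the bias representation. Since \(f(\cdot;\xi_t)\) is \(L\)-smooth, Taylor's bound applied at \(\bx_t\pm\alpha\bu_t\) gives
\[
f(\bx_t\pm\alpha\bu_t;\xi_t)=f(\bx_t;\xi_t)\pm\alpha\bu_t^\top\nabla f(\bx_t;\xi_t)+r_\pm,\qquad |r_\pm|\le \tfrac{L\alpha^2}{2}\|\bu_t\|^2 .
\]
Subtracting and dividing by \(2\alpha\) yields \(\bg(\bx_t;\xi_t)=\bu_t\bu_t^\top\nabla f(\bx_t;\xi_t)+\beta_t\bu_t\), with \(\beta_t=(r_+-r_-)/(2\alpha)\). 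Hence \(|\beta_t|\le L\alpha\|\bu_t\|^2/2\).

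Next I will abbreviate \(a:=\bu_t^\top\nabla f(\bx_t)\) and \(s:=\bu_t^\top\be_t\). Substituting \(\nabla f(\bx_t;\xi_t)=\nabla f(\bx_t)-\be_t\) gives \(\bg=(a-s+\beta_t)\bu_t\). Then \(\langle\nabla f(\bx_t),\bg\rangle=a^2-as+a\beta_t\) and \(\|\bg\|^2=\|\bu_t\|^2(a-s+\beta_t)^2\). The smoothness inequality \eqref{eq:L} gives
\[
\Delta_{t+1}\le\Delta_t-\eta_t a^2+\eta_t a s-\eta_t a\beta_t+\tfrac{L\eta_t^2}{2}\|\bu_t\|^2(a-s+\beta_t)^2 .
\]
The term \(\eta_t a s\) is exactly the signed term \(\eta_t\langle\nabla f(\bx_t),\bu_t\bu_t^\top\be_t\rangle\) and is kept as is.

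For the quadratic I will use \((a-s+\beta)^2\le 2a^2+4s^2+4\beta^2\). This produces \(L\eta_t^2\|\bu_t\|^2a^2+2L\eta_t^2\|\bu_t\|^2s^2+2L\eta_t^2\|\bu_t\|^2\beta_t^2\). The middle piece is the stated quadratic-noise term. The last piece carries a factor \(2\), so to land on the displayed \(\Delta_{\alpha,t}\) exactly I may instead use a sharper split. One option is to group \(\|\bu_t\|^2(a+\beta_t-s)^2\le 2\|\bu_t\|^2(a+\beta_t)^2+2\|\bu_t\|^2s^2\). Another is to absorb the surplus \(\beta^2\) mass via the stepsize bound \(L\eta_t\|\bu_t\|^2\le1/8\), which turns \(L\eta_t^2\|\bu_t\|^2\beta_t^2\) into at most \(\eta_t\beta_t^2/8\). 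I expect the constant bookkeeping to be the only delicate point. The \(\eta_t\beta_t^2/2\) slack in \(\Delta_{\alpha,t}\) is there to absorb it.

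The cross term will be handled by Young's inequality, \(-\eta_t a\beta_t\le \eta_t a^2/4+\eta_t\beta_t^2\) or a similar split. The \(a^2\) contribution from the quadratic is at most \(\eta_t a^2/8\) by the stepsize condition. After this, the net coefficient of \(a^2\) will be at most \(-\eta_t a^2/4\) after tuning. Finally I will invoke strong convexity in its Polyak–Łojasiewicz form \(\|\nabla f(\bx_t)\|^2\ge2\mu\Delta_t\), which is where \(\nabla f(\bx_t)\ne0\) is needed to normalize. It gives
\[
-c\,\eta_t a^2=-c\,\eta_t\frac{a^2}{\|\nabla f(\bx_t)\|^2}\|\nabla f(\bx_t)\|^2\le-2c\mu\eta_t\frac{a^2}{\|\nabla f(\bx_t)\|^2}\Delta_t .
\]
With \(c\ge1/4\) this yields the contraction factor \(1-\tfrac{\mu\eta_t}{2}a^2/\|\nabla f(\bx_t)\|^2\). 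The main obstacle is choosing the Young splits so that the \(a^2\) coefficient stays at least \(\eta_t/4\) while the \(\beta_t^2\) terms fit within \(\eta_t\beta_t^2/2+L\eta_t^2\beta_t^2\|\bu_t\|^2\). I will first try \(-\eta_t a\beta_t\le\eta_t a^2/2+\eta_t\beta_t^2/2\), combined with the \(\tfrac18\) absorption above, and then check the residual \(a^2\) coefficient.
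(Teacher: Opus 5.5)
Your overall route (the descent lemma applied to $\bg(\bx_t;\xi_t)=(a-s+\beta_t)\bu_t$, Young's inequality on $-\eta_t a\beta_t$, absorbing the $a^2$ part of the quadratic via $L\eta_t\|\bu_t\|^2\le 1/8$, then Polyak--{\L}ojasiewicz) is the paper's route. Your Taylor derivation of $|\beta_t|\le L\alpha\|\bu_t\|^2/2$ is fine. The gap is at the step you flag as delicate: the concrete split you commit to cannot produce the stated constants.

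With $(a-s+\beta)^2\le 2a^2+4s^2+4\beta^2$ you get $2L\eta_t^2\|\bu_t\|^2\beta_t^2$. The $\eta_t\beta_t^2/2$ in $\Delta_{\alpha,t}$ is not slack, because your Young step $-\eta_t a\beta_t\le\eta_t a^2/2+\eta_t\beta_t^2/2$ has already used it. No other Young parameter rescues this split either. Write $x:=L\eta_t\|\bu_t\|^2\in[0,1/8]$. You need a net $a^2$ coefficient at most $-\eta_t/4$ and $\beta_t^2$ terms at most $\Delta_{\alpha,t}$. After this split, divide by $\eta_t$ and cancel the matching $as$ and $s^2$ terms. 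What remains is $(\frac34-x)a^2+a\beta+(\frac12-x)\beta^2\ge0$ for all $a,\beta$, which requires $4(\frac34-x)(\frac12-x)\ge1$. At $x=1/8$ this reads $15/16\ge1$, which is false. The value $x=1/8$ is allowed by the hypothesis, and it is attained at $t=0$ when $T_0=32dL/\mu$. Your grouped option fails in the same way if you then expand $(a+\beta_t)^2\le2a^2+2\beta_t^2$.

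The missing idea is to put the extra weight on $a^2$, which the stepsize can absorb, rather than on $\beta^2$. As in the paper, use $(a-s+\beta)^2\le 2(a-s)^2+2\beta^2\le 4a^2+4s^2+2\beta^2$. This gives $\frac{L\eta_t^2}{2}\|\bg\|^2\le 2L\eta_t^2\|\bu_t\|^2a^2+2L\eta_t^2\|\bu_t\|^2s^2+L\eta_t^2\|\bu_t\|^2\beta_t^2$. Your Young step together with $2L\eta_t^2\|\bu_t\|^2a^2\le\eta_t a^2/4$ then leaves coefficient $-\eta_t/4$ on $a^2$ and exactly $\Delta_{\alpha,t}$ on the $\beta_t^2$ terms. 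This split is tight precisely at $x=1/8$.

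Alternatively, keep $(a+\beta_t)^2$ unexpanded in your grouped option and use $-a(a+\beta_t)=-\frac12a^2-\frac12(a+\beta_t)^2+\frac12\beta_t^2$. Since $x\le\frac12$, the $(a+\beta_t)^2$ terms have nonpositive total coefficient and can be dropped. This yields a stronger bound with $-\eta_t a^2/2$, $L\eta_t^2\|\bu_t\|^2s^2$ and $\eta_t\beta_t^2/2$, which implies the stated one.
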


Lemma~\ref{lem:dec_1} is the basic one-step estimate. Iterating
Eq.~\eqref{eq:dec_1} produces a suffix-product recursion. The following two
lemmas control its random contraction factors and state the resulting unrolled
inequality. In the main theorem, the nonzero-gradient condition in
Lemma~\ref{lem:dec_1} holds almost surely by
Proposition~\ref{prop:nonvanishing_gradients}.

\begin{lemma}
\label{lem:cE_rho}
Suppose that Assumption~\ref{ass:Lmu} holds, \(d\ge2\), and
\(\bx_0\ne\bx^*\). Let \(\{\bu_t\}\) and \(\{\bx_t\}\) be the sequences
generated by Algorithm~\ref{alg:SA_1} with fresh independent Gaussian
directions. Let \(T_0=32dL/\mu\), \(T_t=t+T_0\),
\(\Lambda:=\log(T+T_0)\), and \(0<\delta<1\). Define
\[
	J_T
	:=
	1+
	\left\lceil
	\log_2
	\left(
	\frac{T(T+T_0)}{T_0}
	\right)
	\right\rceil
\]
and let \(\Gamma_T(\delta)\) be any number satisfying
\[
	\Gamma_T(\delta)\ge
	1+\log\frac1\delta+\log J_T+\log(e+T_0).
\]
Let \(C>0\) be a universal constant large enough for
Lemma~\ref{lem:weighted_scan_bounds}. Define the events
\begin{align}
	\cE_{\rho_K} :=& \left\{ \exp\left(- 2d \sum_{t=0}^{K-1} \frac{\zeta_t}{T_t} \right) \leq \rho_K \right\}
	\\
	\cE_{\rho_{K,k}} :=& \left\{ \exp\left(- 2d \sum_{t=k+1}^{K-1} \frac{\zeta_t}{T_t} \right) \leq \rho_{K,k} \right\},
\end{align}
and the prefix upper-tail event
\[
	\cE_{\rho}^{+}
	:=
	\left\{
	d\sum_{t=0}^{K-1}\frac{\zeta_t}{T_t}
	\le
	C\Lambda+C\frac{\Gamma_T(\delta)}{T_0}
	\quad K=1,\dots,T
	\right\}.
\]
Here \(\zeta_t\) is the angle variable defined after
Proposition~\ref{prop:nonvanishing_gradients}; it is well-defined almost
surely under the assumptions above.
The constants \(\rho_K\) and \(\rho_{K,k}\) are defined by
\begin{align}
	\rho_K :=& \exp\left(- \sum_{t=0}^{K-1} \frac{1}{T_t} + \frac{2C}{T_0}\Gamma_T(\delta) \right) , \label{eq:rho_k}
	\\
	\rho_{K,k}:=& \exp\left(- \sum_{t=k+1}^{K-1} \frac{1}{T_t} + \frac{2C}{T_0}\Gamma_T(\delta) \right). \label{eq:rho_kk}
\end{align}
Then, with probability at least \(1-\delta\), the joint event
\[
	\bigcap_{K=1}^{T}
	\left[
	\cE_{\rho_K}\cap\left(\cap_{k=0}^{K-1}\cE_{\rho_{K,k}}\right)
	\right]
	\cap \cE_{\rho}^{+}
\]
holds. In particular, the suffix-product comparisons used in
Lemma~\ref{lem:dec_2} hold simultaneously for every terminal time \(K\le T\).
\end{lemma}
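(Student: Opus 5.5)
The plan is to reduce the joint event to uniform two-sided concentration statements for weighted sums of the conditionally Beta variables \(\zeta_t\). These are exactly what Lemma~\ref{lem:weighted_scan_bounds} is meant to supply. Set \(M_{a,b}:=d\sum_{t=a}^{b-1}\zeta_t/T_t\), and note that \(\EE[\zeta_t\mid\mathcal F_t^-]=1/d\), so the compensator of \(M_{a,b}\) is \(\sum_{t=a}^{b-1}1/T_t\). The event \(\cE_{\rho_K}\) is then equivalent to
\[
2M_{0,K}\ge\sum_{t=0}^{K-1}\frac1{T_t}-\frac{2C}{T_0}\Gamma_T(\delta).
\]
Likewise, \(\cE_{\rho_{K,k}}\) is the same statement for the suffix \([k+1,K)\). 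So the main claim is a lower-tail bound for \(M\) with a factor-two slack. The contraction target is only half the mean, which absorbs the multiplicative loss. The additive error is \(O(\Gamma_T(\delta)/T_0)\), and it must hold uniformly over all pairs \(0\le k<K\le T\). The event \(\cE^+_\rho\) is an upper-tail bound for prefixes, with \(\sum_{t<K}1/T_t\le\Lambda\).

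For the lower tail, I would work with the predictable exponential supermartingale
\[
\exp\Big(-\lambda\sum_t w_t\zeta_t+\sum_t\log\EE\big[e^{-\lambda w_t\zeta_t}\mid\mathcal F_t^-\big]\Big),
\qquad w_t=\frac{d}{T_t}.
\]
The conditional law \(\zeta_t\mid\mathcal F_t^-\sim\mathrm{Beta}(1/2,(d-1)/2)\) is the only input needed. Since \(\zeta_t\in[0,1]\), we have \(e^{-x}\le1-x+x^2/2\) for \(x\ge0\). Together with \(\EE\zeta_t^2\le3/d^2\), this gives
\[
\log\EE\big[e^{-\lambda w_t\zeta_t}\mid\mathcal F_t^-\big]\le-\frac{\lambda}{T_t}+\frac{3\lambda^2}{2T_t^2}.
\]
I would take a fixed \(\lambda\asymp T_0\), with \(\lambda\le T_0/3\) so that the quadratic term is at most half the linear one. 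Then \(M_{a,b}\ge\frac12\sum_{t=a}^{b-1}1/T_t-O(\log(1/\delta')/T_0)\) with probability \(1-\delta'\). The upper tail follows the same way: the Beta moment generating function gives \(\log\EE[e^{\lambda w_t\zeta_t}]\le 2\lambda/T_t\) for \(\lambda w_t\le c d\), hence for \(\lambda\lesssim T_0\). Because the weights are deterministic, no predictability issues arise beyond conditioning on \(\mathcal F_t^-\).

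The step I expect to be the main obstacle is uniformity over all \(O(T^2)\) pairs \((k,K)\). A naive union bound would cost \(\log T\) inside \(\Gamma_T\), whereas the statement only allows \(\log J_T+\log(e+T_0)\). I would first handle the prefix direction with Ville's maximal inequality applied to the supermartingale. This gives all \(K\) at once from a single application, since the prefix sums are monotone in \(K\) in the relevant sense. For suffixes, I would write \(M_{k+1,K}=M_{0,K}-M_{0,k+1}\), but a difference of two one-sided bounds loses the mean. Instead, I would peel over dyadic blocks of the deterministic weight: partition the starting times \(k\) into \(J_T\) geometric scales where \(\sum_{t\le k}1/T_t\) changes by \(\log 2\). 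The number of such scales is governed by the ratio \(T(T+T_0)/T_0\), consistent with \(J_T\).

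Within one block, I would run a reverse-type argument: for a fixed block start \(s\), Ville's inequality controls \(\inf_{K>s}\) of the suffix supermartingale started at \(s\). Intermediate starting points \(k\) in the block cost at most the block's weight, which is \(O(1/T_0)\) times the block length factor and is absorbed into \(2C\Gamma_T(\delta)/T_0\). This absorption is where the crude per-term bound \(w_t\zeta_t\le d/T_0\) and the factor \(\log(e+T_0)\) enter. A union bound with \(\delta/(3J_T)\) per block, plus separate allocations for the prefix and upper events, finishes the argument. Finally, Proposition~\ref{prop:nonvanishing_gradients} ensures that all \(\zeta_t\) are defined almost surely, so the events are well defined.
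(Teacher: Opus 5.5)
\textbf{Gap: uniformity over starting points.} Your reduction and your single-start concentration step are sound. Take $\lambda=T_0/3$, which is admissible for every $t$ because $T_t\ge T_0$, and \emph{subtract} the log-mgf in the exponent (your display has the sign reversed). Then $\exp\bigl(-\lambda M_{s,u}+\tfrac{\lambda}{2}\sum_{t=s}^{u-1}T_t^{-1}\bigr)$ is a supermartingale in $u$. Ville's inequality gives $M_{s,K}\ge\frac12\sum_{t=s}^{K-1}T_t^{-1}-3\log(1/\delta')/T_0$ for all $K>s$ simultaneously, and the prefix upper tail follows in the same way.

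The gap is the passage from a few starting points to all of them. Your blocks are the $J_T$ dyadic scales on which $\sum_{t\le k}1/T_t$ grows by $\log 2$. Each block therefore carries deterministic weight about $\log 2$, which is $\Theta(1)$, not $O(1/T_0)$. Take a start $k+1$ inside a block anchored at $s$. Then $M_{k+1,K}=M_{s,K}-M_{s,k+1}$, and $M_{s,k+1}$ concentrates at $\sum_{t=s}^{k}1/T_t$. Relative to the target $\frac12\sum_{t=k+1}^{K-1}1/T_t$ you lose about $\frac12\sum_{t=s}^{k}1/T_t$, which is up to roughly $\frac12\log 2$, or up to $d\log 2$ if you use the crude bound $d\zeta_t/T_t\le d/T_0$. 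Anchoring at the right end of the block instead loses half the skipped weight, again $\Theta(1)$. Since $\Gamma_T(\delta)/T_0$ can be arbitrarily small (fix $T,\delta$ and let $T_0\to\infty$), this loss cannot be absorbed into $2C\Gamma_T(\delta)/T_0$. With only $J_T$ anchors you obtain $\cE_{\rho_{K,k}}$ only with an extra $O(1)$ in the exponent.

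\textbf{Fix: finer anchors.} On the $j$-th scale $1/T_t\le 2^{-j}/T_0$, so place anchors $2^j$ indices apart there, including the first index of each scale.
\begin{itemize}
\item Consecutive anchors are then separated by deterministic weight at most $1/T_0$.
\item There are $O(T_0J_T)$ anchors, so the union bound costs $\log(1/\delta)+O(\log(e+T_0)+\log J_T)\lesssim\Gamma_T(\delta)$. This is exactly where the $\log(e+T_0)$ term in $\Gamma_T(\delta)$ is used.
\item For an arbitrary start, use monotonicity in the start index ($\zeta_t\ge0$): $M_{k+1,K}\ge M_{s',K}$, where $s'$ is the first anchor $\ge k+1$. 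This loses at most $1/(2T_0)$. If $s'\ge K$, the suffix lies inside one gap and the claim is trivial.
\end{itemize}
With this repair your argument is a valid alternative.

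\textbf{Comparison with the paper.} The paper proves this lemma by applying Lemma~\ref{lem:weighted_scan_bounds} with $a=k+1$, $b=K-1$; the case $k=K-1$ is trivial since $\rho_{K,K-1}\ge1$. Inside that lemma, it union-bounds over \emph{every} start in annulus $j$, using an annulus-adapted Bernstein scale $m_j=1/(2^jT_0)$. The per-start cost $\log N_j\lesssim j+\log(e+T_0)$ is thus multiplied by $m_j$ and sums geometrically. Endpoints are handled by mass blocks, and intervals spanning several annuli are split at annulus boundaries. Your one-sided mgf with half-mean slack needs neither the Bernstein moment bounds nor the endpoint blocks, so with the fine anchor grid it is the shorter route.
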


Lemma~\ref{lem:cE_rho} lower-bounds the weighted angle sums induced by the
schedule \(\eta_t=\Theta(1/(t+T_0))\)
\citep{bottou2018optimization,liu2023revisiting}. These bounds yield the
following unrolled recursion for an arbitrary terminal time \(K\).

\begin{lemma}
\label{lem:dec_2}
Suppose that the hypotheses of Lemma~\ref{lem:dec_1} hold along the trajectory
of Algorithm~\ref{alg:SA_1}. Set
\(\eta_t=4d/[\mu(t+T_0)\norm{\bu_t}^2]\) with
\(T_0=32dL/\mu\), and write \(T_t:=t+T_0\). Suppose that
\(\cE_{\rho_K}\) and \(\cE_{\rho_{K,k}}\), \(k=0,\dots,K-1\), hold.
Define the actual suffix product
\[
	\Pi_{K,k}
	:=
	\prod_{t=k+1}^{K-1}
	\left(
	1-\frac{2d}{T_t}\zeta_t
	\right),
	\qquad k=0,\dots,K-1,
\]
with the empty product equal to \(1\).
Then
\begin{equation}\label{eq:dec_2}
\begin{aligned}
\Delta_K
\leq&
\rho_K \cdot \Delta_0
+ \sum_{k=0}^{K-1} \Pi_{K,k} \left( \frac{4d \cdot \dotprod{\nabla f(\bx_k), \bu_k\bu_k^\top \be_k}}{\mu (k+T_0) \norm{\bu_k}^2} \right)
\\
&+
\sum_{k=0}^{K-1} \rho_{K,k} \left(\frac{32Ld^2 }{\mu^2 (k+T_0)^2} \cdot \frac{|\bu_k^\top \be_k|^2}{\norm{\bu_k}^2}
\right)
\\
&+
\sum_{k=0}^{K-1} \rho_{K,k} \left( \frac{ d L^2\alpha^2 \norm{\bu_k}^2}{2\mu(k+T_0)}
+
\frac{ 4 d^2 L^3\alpha^2 \norm{\bu_k}^2}{ \mu^2 (k+T_0)^2 } \right).
\end{aligned}
\end{equation}
\end{lemma}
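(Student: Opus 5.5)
The plan is to specialize Lemma~\ref{lem:dec_1} to the stepsize \(\eta_t=4d/[\mu T_t\|\bu_t\|^2]\) and then unroll the recursion. The normalization makes \(\eta_t\|\bu_t\|^2=4d/(\mu T_t)\) deterministic, so the contraction factor in Eq.~\eqref{eq:dec_1} is
\[
1-\frac{\mu\eta_t}{2}\frac{(\bu_t^\top\nabla f(\bx_t))^2}{\|\nabla f(\bx_t)\|^2}
=1-\frac{2d}{T_t}\zeta_t=:a_t .
\]
Since \(\zeta_t\le1\) and \(T_t\ge T_0=32dL/\mu\ge 32d\), we have \(a_t\in[1-1/16,1]\subset(0,1]\). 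Thus every \(a_t\) is positive and the suffix products \(\Pi_{K,k}\) are nonnegative.

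Next we rewrite each remaining term with the explicit stepsize. The linear term \(\eta_t\langle\nabla f(\bx_t),\bu_t\bu_t^\top\be_t\rangle\) is exactly the bracket in the second line of Eq.~\eqref{eq:dec_2}. For the quadratic term, \(2L\eta_t^2\|\bu_t\|^2|\bu_t^\top\be_t|^2=32Ld^2|\bu_t^\top\be_t|^2/(\mu^2T_t^2\|\bu_t\|^2)\). For the bias term we use \(\beta_t^2\le L^2\alpha^2\|\bu_t\|^4/4\), which gives
\[
\frac{\eta_t\beta_t^2}{2}\le\frac{dL^2\alpha^2\|\bu_t\|^2}{2\mu T_t},
\qquad
L\eta_t^2\beta_t^2\|\bu_t\|^2\le\frac{4d^2L^3\alpha^2\|\bu_t\|^2}{\mu^2T_t^2}.
\]
Together these yield the one-step inequality \(\Delta_{t+1}\le a_t\Delta_t+\ell_t+q_t+b_t\), where \(\ell_t\) is the signed linear term and \(q_t,b_t\ge0\).

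We then induct on \(K\), using the positivity of the \(a_t\) to preserve the inequalities when multiplying. This gives
\[
\Delta_K\le\Big(\prod_{t=0}^{K-1}a_t\Big)\Delta_0+\sum_{k=0}^{K-1}\Pi_{K,k}(\ell_k+q_k+b_k).
\]
The signed terms keep the exact weights \(\Pi_{K,k}\), which is precisely the form required in Eq.~\eqref{eq:dec_2}.

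The only place the events enter is in replacing products by \(\rho\)-constants, and this is done only for the nonnegative quantities \(\Delta_0\), \(q_k\), and \(b_k\). Because \(1-x\le e^{-x}\), we have
\[
\prod_{t=0}^{K-1}a_t\le\exp\Big(-2d\sum_{t=0}^{K-1}\zeta_t/T_t\Big)\le\rho_K
\]
on \(\cE_{\rho_K}\). Similarly, \(\Pi_{K,k}\le\exp\big(-2d\sum_{t=k+1}^{K-1}\zeta_t/T_t\big)\le\rho_{K,k}\) on \(\cE_{\rho_{K,k}}\). Multiplying these bounds by \(\Delta_0\ge0\), \(q_k\ge0\), and \(b_k\ge0\) gives the remaining lines of Eq.~\eqref{eq:dec_2}.

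The main point requiring care is to avoid bounding the signed term \(\Pi_{K,k}\ell_k\) by \(\rho_{K,k}\), since that step would be invalid without an absolute value. We also need the positivity check \(2d\zeta_t/T_t<1\), which is what lets us multiply inequalities through the recursion. Everything else is routine substitution.
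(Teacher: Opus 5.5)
Your proposal is correct and follows essentially the same route as the paper's proof. Both substitute the norm-normalized stepsize into Lemma~\ref{lem:dec_1} to get the factor \(1-2d\zeta_t/T_t\) and the explicit quadratic and bias terms, unroll using nonnegativity of the factors (from \(T_0\ge 32d\)), and apply \(1-x\le e^{-x}\) with the \(\rho\)-events only to the nonnegative initial, quadratic, and bias terms, keeping the actual product \(\Pi_{K,k}\) on the signed linear term. Your order is slightly cleaner: you check positivity of the factors before unrolling, whereas the paper notes it after the unrolled display.
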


The following subsections control the terms on the right-hand side of
Eq.~\eqref{eq:dec_2}.

\subsection{Two Consequences of Conditional Sub-Gaussian Noise}

We require two consequences of Assumption~\ref{ass:SubG}, corresponding to the
linear and quadratic stochastic-error terms. They replace the uses of an
almost-sure noise bound in a bounded-noise analysis; in particular, no event of
the form \(\max_t\|\be_t\|\le R\) is introduced.

\begin{lemma}[Conditional sub-Gaussian projection]
\label{lem:subg_projection}
	Let \(\mathcal F\) be a \(\sigma\)-algebra. Suppose that a random vector
	\(\be\in\mathbb R^d\) satisfies
	\[
		\EE[\be\mid\mathcal F]=0,
		\qquad
		\EE\left[
		\exp\left(\lambda\|\be\|^2\right)
		\middle|\mathcal F
		\right]
		\le
		\exp(\lambda\sigma^2),
		\qquad
		0<\lambda<\sigma^{-2}.
	\]
	Then there is a universal numerical constant \(C_{\mathrm{sg}}\ge1\) such
	that, for every \(\mathcal F\)-measurable vector \(\bz\in\mathbb R^d\) and
	every \(\theta\in\mathbb R\),
	\begin{equation}
	\label{eq:subg_projection_mgf}
		\EE\left[
		\exp\left(\theta\langle \be,\bz\rangle\right)
		\middle|\mathcal F
		\right]
		\le
		\exp\left(C_{\mathrm{sg}}\theta^2\sigma^2\|\bz\|^2\right).
	\end{equation}
\end{lemma}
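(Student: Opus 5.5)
Since \(\bz\) is \(\mathcal F\)-measurable, we can condition on \(\mathcal F\) and treat \(\bz\) as a fixed vector. Put \(X:=\langle \be,\bz\rangle\). Then \(\EE[X\mid\mathcal F]=0\), and Cauchy--Schwarz gives \(|X|\le\|\be\|\|\bz\|\). If \(\bz=0\) there is nothing to prove, so assume \(\|\bz\|>0\). The plan is to move the squared-norm exponential-moment bound onto \(X\), obtaining a conditional \(\psi_2\)-type control. Then we use the standard fact that a centered variable with a sub-Gaussian tail has a sub-Gaussian moment generating function. We split into small and large \(|\theta|\), writing \(s:=\theta\sigma\|\bz\|\).

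\emph{Step 1: a squared-exponential moment for \(X\).} Apply the hypothesis with \(\lambda=1/(2\sigma^2)\). Then
\[
\EE\left[\exp\left(\frac{X^2}{2\sigma^2\|\bz\|^2}\right)\middle|\mathcal F\right]\le \EE\left[\exp\left(\frac{\|\be\|^2}{2\sigma^2}\right)\middle|\mathcal F\right]\le e^{1/2}.
\]

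\emph{Step 2: large \(|\theta|\), i.e.\ \(|s|\ge1\).} Young's inequality gives \(\theta X\le \frac{X^2}{2\sigma^2\|\bz\|^2}+\frac{\theta^2\sigma^2\|\bz\|^2}{2}\). Taking conditional expectations and using Step 1,
\[
\EE[e^{\theta X}\mid\mathcal F]\le e^{1/2}e^{s^2/2}\le e^{s^2}.
\]
The last inequality uses \(s^2\ge1\). So any \(C_{\mathrm{sg}}\ge1\) works in this regime.

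\emph{Step 3: small \(|\theta|\), i.e.\ \(|s|<1\). This is the main step, because centering has to be used.} We use the elementary inequality \(e^{y}\le 1+y+\frac{y^2}{2}e^{|y|}\). With \(y=\theta X\), the linear term has zero conditional mean. This leaves
\[
\EE[e^{\theta X}\mid\mathcal F]\le 1+\frac{\theta^2}{2}\EE\left[X^2e^{|\theta X|}\middle|\mathcal F\right].
\]
To bound the remaining expectation, first use \(|\theta X|\le \frac{X^2}{4\sigma^2\|\bz\|^2}+s^2\) and \(s^2<1\). This gives \(e^{|\theta X|}\le e\cdot\exp\left(\frac{X^2}{4\sigma^2\|\bz\|^2}\right)\). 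Next use \(x\le e^{x/4}\cdot 4/e\) with \(x=X^2/(\sigma^2\|\bz\|^2)\), which gives \(X^2\le \frac{4\sigma^2\|\bz\|^2}{e}\exp\left(\frac{X^2}{4\sigma^2\|\bz\|^2}\right)\). Multiplying the two bounds and applying Step 1 yields
\[
\EE\left[X^2e^{|\theta X|}\middle|\mathcal F\right]\le 4\sigma^2\|\bz\|^2 e^{1/2}.
\]
Hence \(\EE[e^{\theta X}\mid\mathcal F]\le 1+2e^{1/2}s^2\le \exp(2e^{1/2}s^2)\).

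\emph{Step 4: combine.} Steps 2 and 3 together give Eq.~\eqref{eq:subg_projection_mgf} with \(C_{\mathrm{sg}}=\max\{1,2e^{1/2}\}\le 4\). The only subtle point is measurability. All inequalities used are pointwise in \(\omega\), and \(\bz\) is \(\mathcal F\)-measurable, so we may pull \(\bz\) out of the conditional expectations. This is justified by a regular conditional distribution of \(\be\) given \(\mathcal F\), or equivalently by approximating \(\bz\) with simple \(\mathcal F\)-measurable vectors. The large-\(|\theta|\) case needs no centering; the small-\(|\theta|\) case relies on \(\EE[\be\mid\mathcal F]=0\) to remove the linear term, and that is exactly what makes the bound quadratic in \(\theta\).
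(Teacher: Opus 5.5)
Your proof is correct. Its first step is the paper's own: you take \(\lambda=1/(2\sigma^2)\) and apply Cauchy--Schwarz to get \(\EE[\exp(X^2/(2\sigma^2\|\bz\|^2))\mid\mathcal F]\le e^{1/2}\), which is a conditional \(\psi_2\) bound on \(X=\langle\be,\bz\rangle\).

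You differ from the paper in how you convert this into the mgf bound.

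\emph{The paper's route.} It passes through moment growth, \(\EE[|X|^m\mid\mathcal F]\le Ca^m m^{m/2}\). It then expands \(\EE[e^{\theta X}\mid\mathcal F]\) as a power series, uses centering to remove the linear term, and sums the series, citing Lemma~2.1 of Liu et al. The summation is left to a standard but unwritten estimate.

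\emph{Your route.} You split on the \(\mathcal F\)-measurable quantity \(s=\theta\sigma\|\bz\|\). For \(|s|\ge1\), Young's inequality plus the squared-exponential moment gives \(e^{1/2+s^2/2}\le e^{s^2}\), with no centering needed. For \(|s|<1\), you use the Taylor-remainder bound \(e^y\le1+y+\tfrac{y^2}{2}e^{|y|}\). The two pointwise estimates \(e^{|\theta X|}\le e\exp(X^2/(4\sigma^2\|\bz\|^2))\) and \(X^2\le\tfrac{4\sigma^2\|\bz\|^2}{e}\exp(X^2/(4\sigma^2\|\bz\|^2))\) then reduce everything back to Step 1.

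\emph{What each buys.} Your argument uses only pointwise inequalities followed by monotonicity of conditional expectation. As a result, conditioning with a random \(\mathcal F\)-measurable \(\bz\) and the \(\mathcal F\)-measurable case split are handled cleanly. It also yields the explicit constant \(C_{\mathrm{sg}}=2\sqrt{e}<4\). The paper's version is shorter on the page because it delegates the \(\psi_2\)-to-mgf equivalence to the literature, but it gives no explicit constant.
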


Lemma~\ref{lem:subg_projection} provides the scalar conditional
sub-Gaussian estimate needed to control the signed linear stochastic error by a
product-martingale argument.

\begin{lemma}[Random-direction quadratic sub-exponential bound]
\label{lem:subg_random_direction_square}
	Let \(\mathcal F\) be a \(\sigma\)-algebra, and suppose that
	\(\be\in\mathbb R^d\) satisfies the conditional mean-zero and
	exponential-moment conditions in Lemma~\ref{lem:subg_projection}. Let
	\(\bu\sim\mathcal N(0,I_d)\) be independent of \(\mathcal F\vee\sigma(\be)\). In applications,
	\(\mathcal F\) is the pre-direction \(\sigma\)-field, so conditioning is
	performed before \(\bu\) is drawn and the Gaussian direction can be
	integrated out. Then, for every
	\(0<\lambda\le 1/(2\sigma^2)\),
	\begin{equation}
	\label{eq:subg_random_direction_square_mgf}
		\EE\left[
		\exp\left(
			\lambda
			\frac{(\bu^\top\be)^2}{\|\bu\|^2}
		\right)
		\middle|\mathcal F
		\right]
		\le
		\exp\left(\frac{2\lambda\sigma^2}{d}\right).
	\end{equation}
	Consequently, the following sequential version holds. Suppose that, at
	times \(k=0,\dots,K-1\), \(\be_k\) satisfies the same conditional
	mean-zero and exponential-moment bounds with respect to the pre-direction
	history, and
	\(\bu_k\sim\mathcal N(0,I_d)\) is conditionally independent of that
	history and of \(\be_k\). Then, for any nonnegative deterministic weights
	\(w_0,\dots,w_{K-1}\), with \(W=\sum_{k=0}^{K-1}w_k\) and
	\(w_{\max}=\max_{0\le k\le K-1}w_k\), the following inequality holds
	with probability at least \(1-\delta\):
	\begin{equation}
	\label{eq:subg_weighted_quadratic}
		\sum_{k=0}^{K-1}
		w_k\frac{(\bu_k^\top\be_k)^2}{\|\bu_k\|^2}
		\le
		\frac{2\sigma^2}{d}W
		+
		2\sigma^2 w_{\max}\log\frac1\delta.
	\end{equation}
\end{lemma}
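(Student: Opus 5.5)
We prove the one-shot bound first and then derive the sequential inequality from it by an exponential supermartingale and Markov's inequality at the level of the exponential.

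\emph{One-shot bound.} We exploit the polar decomposition of the Gaussian direction. Write \(\bu=\|\bu\|\bv\), where \(\bv\) is uniform on the unit sphere and independent of \(\|\bu\|\). Then \((\bu^\top\be)^2/\|\bu\|^2=(\bv^\top\be)^2\), and conditional on \(\mathcal F\vee\sigma(\be)\) this equals \(\|\be\|^2\,\zeta\), where \(\zeta\sim\mathrm{Beta}(1/2,(d-1)/2)\) is independent of \(\be\) by rotational invariance. Integrating out \(\bv\) first, we need
\[
\EE\bigl[\exp(\lambda\|\be\|^2\zeta)\,\big|\,\be\bigr]
\]
as a function of \(s:=\lambda\|\be\|^2\). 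The key convexity step is the following. Because \(x\mapsto e^{x s}\) is convex and \(\zeta\in[0,1]\), we have \(e^{s\zeta}\le 1-\zeta+\zeta e^{s}\). Taking expectation with \(\EE\zeta=1/d\) gives
\[
\EE[e^{s\zeta}]\le 1+\frac{e^{s}-1}{d}.
\]
Now take the \(\mathcal F\)-conditional expectation over \(\be\), using the assumed moment bound at the same \(\lambda\) (note \(\lambda\le 1/(2\sigma^2)<\sigma^{-2}\)):
\[
\EE\bigl[e^{\lambda(\bu^\top\be)^2/\|\bu\|^2}\,\big|\,\mathcal F\bigr]\le 1+\frac{e^{\lambda\sigma^2}-1}{d}.
\]
Since \(\lambda\sigma^2\le1/2\), we have \(e^{x}-1\le 2x\) on \([0,1/2]\) (indeed \(e^{1/2}-1\approx0.65\le1\)). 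Hence the right-hand side is at most \(1+2\lambda\sigma^2/d\le\exp(2\lambda\sigma^2/d)\), which is Eq.~\eqref{eq:subg_random_direction_square_mgf}. Mean-zero is not needed here.

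\emph{Sequential bound.} Set \(\lambda:=1/(2\sigma^2w_{\max})\) (if \(w_{\max}=0\) the claim is trivial) and \(X_k:=(\bu_k^\top\be_k)^2/\|\bu_k\|^2\). Then \(\lambda w_k\le 1/(2\sigma^2)\), so the one-shot bound applied with respect to the pre-direction history of step \(k\) gives
\[
\EE[\exp(\lambda w_kX_k)\mid\mathcal F_k^-]\le\exp(2\lambda w_k\sigma^2/d).
\]
Consequently \(M_j:=\exp\bigl(\lambda\sum_{k<j}(w_kX_k-2w_k\sigma^2/d)\bigr)\) is a nonnegative supermartingale along \(\mathcal F_0^-\subseteq\mathcal F_1^-\subseteq\cdots\), because \(X_k\) is \(\mathcal F_{k+1}^-\)-measurable. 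The filtration nesting is supplied by Proposition~\ref{prop:algorithm_conditional_subg}. Thus \(\EE M_K\le1\), and Markov's inequality yields \(\PP(M_K\ge1/\delta)\le\delta\). On the complement, \(\sum_k w_kX_k\le 2\sigma^2W/d+\lambda^{-1}\log(1/\delta)=2\sigma^2W/d+2\sigma^2w_{\max}\log(1/\delta)\), which is Eq.~\eqref{eq:subg_weighted_quadratic}.

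The main point to check carefully is the independence structure in the one-shot step. The claim that \(\zeta\) is independent of \(\be\) given \(\mathcal F\) relies on \(\bu\) being independent of \(\mathcal F\vee\sigma(\be)\), so that conditioning on \(\be\) leaves \(\bv\) uniform. Rotation invariance then makes \((\bv^\top\be/\|\be\|)^2\) Beta-distributed regardless of the direction of \(\be\). If \(\be=0\), the bound is trivial.
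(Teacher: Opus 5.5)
Your proposal is correct and follows the paper's proof: the convexity bound $e^{s\Theta}\le 1+\Theta(e^{s}-1)$ with $\EE\Theta=1/d$, then the exponential-moment hypothesis at the same $\lambda$ with $e^{\lambda\sigma^2}-1\le 2\lambda\sigma^2$, then iterated conditioning (your supermartingale $M_j$) and Markov's inequality at $\lambda=(2\sigma^2 w_{\max})^{-1}$. The Beta-law framing is harmless but not needed, since only $\Theta\in[0,1]$ and $\EE\Theta=1/d$ are used, and these also hold for $d=1$.
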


Lemma~\ref{lem:subg_random_direction_square} is used for the quadratic-noise
term in Eq.~\eqref{eq:dec_2}; it integrates over the Gaussian direction
directly and avoids an almost-sure bound on \(\|\be_k\|\).

\subsection{Linear Martingale Bound}

We next control the signed suffix-product linear term. Crucially, the realized
product \(\Pi_{K,k}\) is retained through the martingale concentration step;
the deterministic comparison \(\Pi_{K,k}\le\rho_{K,k}\) is used only when
bounding the resulting variance proxy. For a fixed terminal time \(K\), define
\begin{equation}
\label{eq:T_Y}
T_k:=k+T_0,
\qquad
Y_{K,k}^{\Pi}
:=
\frac{\Pi_{K,k}}{T_k}
\cdot
\frac{\nabla f(\bx_k)^\top \bu_k \, \bu_k^\top \be_k}{\|\bu_k\|^2},
\qquad k=0,\dots,K-1,
\end{equation}
where \(\be_k\) is defined in Eq.~\eqref{eq:def}.

\begin{lemma}[Future angle innovations are conditionally independent of current noise]
\label{lem:future_angle_innovations}
	Fix \(K\) and \(k<K\). Let \(\mathcal F_k^+\) be the post-update history
	defined in Proposition~\ref{prop:algorithm_conditional_subg}.
	Conditional on \(\mathcal F_k^+\), the future angle variables
	\[
		\zeta_t
		=
		\frac{(\bu_t^\top\nabla f(\bx_t))^2}
		{\|\bu_t\|^2\|\nabla f(\bx_t)\|^2},
		\qquad t=k+1,\dots,K-1,
	\]
	have the product law
	\[
		\nu^{\otimes(K-k-1)},\qquad
		\nu=\mathrm{Beta}\left(\frac12,\frac{d-1}{2}\right),
	\]
	where \(\nu^{\otimes(K-k-1)}\) denotes the \((K-k-1)\)-fold product
	measure, equivalently, the law of \(K-k-1\) conditionally independent
	random variables with common distribution \(\nu\).
	Although the future iterates depend on the realized value of \(\xi_k\)
	through \(\bx_{k+1}\), this conditional angle law is deterministic. More
	precisely, with \(\mathcal F_k^{u}\) as defined in
	Proposition~\ref{prop:algorithm_conditional_subg},
	\(\sigma(\zeta_{k+1},\dots,\zeta_{K-1})\) is conditionally independent of
	\(\xi_k\), and hence of \(\be_k\), given \(\mathcal F_k^{u}\). Therefore,
	before sampling \(\xi_k\), one may enlarge the conditioning
	\(\sigma\)-field by
	\(\zeta_{k+1},\dots,\zeta_{K-1}\) without changing the conditional
	distributional bounds for \(\be_k\).
\end{lemma}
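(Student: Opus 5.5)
We prove the product-law statement by induction on the number of future angles, conditioning step by step through the filtration of Proposition~\ref{prop:algorithm_conditional_subg}. The key one-step fact is the following. Suppose \(\mathcal H\) is any history containing \(\mathcal F_t^{-}\) and independent of \(\bu_t\). Then, conditional on \(\mathcal H\), the variable \(\zeta_t\) has law \(\nu\), and this law does not depend on \(\mathcal H\). Indeed, the unit vector \(\bv_t:=\nabla f(\bx_t)/\|\nabla f(\bx_t)\|\) is \(\mathcal F_t^{-}\)-measurable; it is well defined almost surely by Proposition~\ref{prop:nonvanishing_gradients}. Since \(\bu_t\sim\mathcal N(0,\bm I_d)\) is fresh, rotational invariance gives that \((\bv_t^\top\bu_t)^2/\|\bu_t\|^2\) has law \(\mathrm{Beta}(1/2,(d-1)/2)\) for every fixed unit vector \(\bv_t\). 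Hence, for bounded measurable \(h\),
\[
\EE[h(\zeta_t)\mid\mathcal H]=\int h\,d\nu \quad\text{a.s.}
\]

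For the induction, take bounded measurable \(h_{k+1},\dots,h_{K-1}\) and compute \(\EE[\prod_{t=k+1}^{K-1}h_t(\zeta_t)\mid\mathcal F_k^+]\) by the tower property. First condition on \(\mathcal F_{K-1}^{-}\). This \(\sigma\)-field contains \(\mathcal F_k^+\) and makes \(\zeta_{k+1},\dots,\zeta_{K-2}\) measurable, because each \(\zeta_t\) is \(\mathcal F_t^{u}\)-measurable and \(\mathcal F_t^{u}\subseteq\mathcal F_{K-1}^{-}\) for \(t<K-1\). The one-step fact then replaces \(h_{K-1}(\zeta_{K-1})\) by the constant \(\int h_{K-1}\,d\nu\). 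Repeating this downward through \(t=K-2,\dots,k+1\) gives
\[
\EE\Bigl[\textstyle\prod_{t=k+1}^{K-1}h_t(\zeta_t)\Bigm|\mathcal F_k^+\Bigr]=\textstyle\prod_t\int h_t\,d\nu.
\]
This identifies the conditional law as \(\nu^{\otimes(K-k-1)}\), and shows that this law is deterministic even though \(\bx_{k+1},\dots\) depend on \(\xi_k\).

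For the conditional independence claim, note that \(\mathcal F_k^{u}\vee\sigma(\xi_k)\subseteq\mathcal F_k^+\). The identity above holds given \(\mathcal F_k^+\), so it also holds, by the tower property, given \(\mathcal F_k^{u}\vee\sigma(\xi_k)\), and also given \(\mathcal F_k^{u}\) alone, with the same constant value. Now take a bounded \(\sigma(\xi_k)\)-measurable \(g\) and write \(H:=\prod_t h_t(\zeta_t)\). Then
\[
\EE[gH\mid\mathcal F_k^{u}]=\EE\bigl[g\,\EE[H\mid\mathcal F_k^{u}\vee\sigma(\xi_k)]\bigm|\mathcal F_k^{u}\bigr]=\EE[g\mid\mathcal F_k^{u}]\,\EE[H\mid\mathcal F_k^{u}].
\]
This factorization is exactly conditional independence of \(\sigma(\zeta_{k+1},\dots,\zeta_{K-1})\) and \(\xi_k\) given \(\mathcal F_k^{u}\). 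It transfers to \(\be_k\), which is a measurable function of \((\bx_k,\xi_k)\) with \(\bx_k\in\mathcal F_k^{u}\), using the joint measurability in Assumption~\ref{ass:est}.

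The final sentence of the lemma follows from this conditional independence. Set \(\mathcal G:=\mathcal F_k^{u}\vee\sigma(\zeta_{k+1},\dots,\zeta_{K-1})\). For any bounded functional \(\phi(\be_k)\), and in particular for the truncations of \(\be_k\) and of \(\exp(\lambda\|\be_k\|^2)\) followed by monotone convergence, we have \(\EE[\phi(\be_k)\mid\mathcal G]=\EE[\phi(\be_k)\mid\mathcal F_k^{u}]\). Proposition~\ref{prop:algorithm_conditional_subg} then gives the mean-zero and exponential-moment bounds given \(\mathcal G\).

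The main obstacle is the bookkeeping in the tower-property step. The future directions \(\bu_t\), \(t>k\), must be independent of everything observed before them, including \(\xi_k\) and the resulting iterates. This holds by the sampling rule, but it has to be applied to each \(\bu_t\) with the correct pre-direction \(\sigma\)-field \(\mathcal F_t^{-}\), not with \(\mathcal F_k^+\) directly. The measure-zero set where \(\nabla f(\bx_t)=0\) must also be handled by setting \(\zeta_t\) equal to an arbitrary \(\nu\)-distributed value there, which is harmless by Proposition~\ref{prop:nonvanishing_gradients}.
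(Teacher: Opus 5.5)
Your proposal is correct and follows essentially the same route as the paper. You obtain the deterministic product law given $\mathcal F_k^{+}$ by backward conditioning through the pre-direction fields $\mathcal F_t^{-}$, derive conditional independence from the tower-property factorization $\EE[gH\mid\mathcal F_k^{u}]=\EE[g\mid\mathcal F_k^{u}]\,\EE[H\mid\mathcal F_k^{u}]$, and transfer to $\be_k$ with monotone convergence. The one step you leave implicit is the monotone-class (or $\pi$--$\lambda$) extension from product test functions $\prod_t h_t(\zeta_t)$ to all bounded functions of $(\zeta_{k+1},\dots,\zeta_{K-1})$; the paper invokes it explicitly, and it is needed both to identify the joint law and to get conditional independence of the full $\sigma$-field.
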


Lemma~\ref{lem:future_angle_innovations} justifies the enlarged filtration in
the product-martingale argument: future angle variables may be revealed
without changing the current noise law.

\paragraph{Product-linear event and variance proxy.}
For \(1\le K\le T\), let \(Y_{K,k}^{\Pi}\) be defined in
Eq.~\eqref{eq:T_Y}. Given a deterministic envelope \(\bar A_K>0\) and an
auxiliary confidence parameter \(\delta_K^{(Y)}\in(0,1)\), define
\begin{equation}
\cE_{Y_K}^{\Pi}(\bar A_K)
:=
\left\{
	\sum_{k=0}^{K-1} Y_{K,k}^{\Pi}
	\le
	C_Y\sigma
	\sqrt{\bar A_K\log\frac{1}{\delta_K^{(Y)}}}
\right\},
\end{equation}
where \(C_Y\ge1\) is a universal numerical constant fixed large enough in the
time-uniform product-martingale argument below. Also define
\begin{equation}
	A_K^{\Pi}
	:=
	\sum_{k=0}^{K-1}
	\frac{\Pi_{K,k}^2}{T_k^2}
	\frac{(\nabla f(\bx_k)^\top \bu_k)^2}
	{\|\bu_k\|^2\|\nabla f(\bx_k)\|^2}
	\|\nabla f(\bx_k)\|^2.
\end{equation}
On the probability-one event of Proposition~\ref{prop:nonvanishing_gradients},
the normalized factor is precisely the angle ratio \(\zeta_k\), so this is the
same intrinsic variance proxy as
\(\sum_k\Pi_{K,k}^2T_k^{-2}(\nabla f(\bx_k)^\top\bu_k)^2/\|\bu_k\|^2\).
In the application to the induction proof, we compare this variance proxy with
the deterministic envelope
\begin{equation}
\label{eq:Abar_product_linear}
	\bar A_K
	:=
	C_A
	\frac{L\cC c_\rho^2\Theta_T\Lambda}{T_K^2}
	\left(1+\frac{c_\delta}{T_0}\right),
	\qquad 1\le K\le T,
\end{equation}
where \(C_A\) is a sufficiently large universal numerical constant. The
product-linear lemma below gives the time-uniform high-probability bound for
the signed product-linear term and is stated for arbitrary deterministic
envelopes; Eq.~\eqref{eq:Abar_product_linear} is the choice used when the
bound is assembled into the time-uniform induction event. The lemma is
conditional on the product-envelope event \(\mathcal P_T(C_P)\);
Lemma~\ref{lem:time_uniform_controls} verifies this envelope from the weighted
scan event.

\begin{lemma}[Time-uniform signed product-linear bound]
\label{lem:product_linear_uniform}
	Let the assumptions of Theorem~\ref{thm:main_1} hold. Define
	\[
		q_t:=1-\frac{2d}{T_t}\zeta_t,\qquad
		P_0:=1,\qquad
		P_K:=\prod_{t=0}^{K-1}q_t,
	\]
	and, for a numerical constant \(C_P\), let
	\[
		\mathcal P_T(C_P)
		:=
		\left\{
		P_K^{-2}\le
		(T+T_0)^{C_P}
		\exp\!\left(C_P\frac{\Gamma_T(\delta)}{T_0}\right),
		\quad K=1,\dots,T
		\right\}.
	\]
	Let \(\bar A_K>0\), \(K=1,\dots,T\), be deterministic envelopes and set
	\[
		U_T:=
		(T+T_0)^{C_P}
		\exp\!\left(C_P\frac{\Gamma_T(\delta)}{T_0}\right)
		\max_{K\le T}\bar A_K.
	\]
	Let \(0<\delta_{\rm lin}<1\) and let
	\(G\ge1+\log(1/\delta_{\rm lin})\) satisfy
	\[
		\max_{K\le T}
		\log\left(1+\log\left(e+\frac{U_T}{\bar A_K}\right)\right)
		\le C\,G.
	\]
	Then, for a universal numerical constant \(C_Y\),
	\begin{align}
	\label{eq:product_linear_uniform_bound}
	\Pr\Bigg(
		\mathcal P_T(C_P)
		\cap
		\Bigg\{
		\exists K\le T:\,
		A_K^\Pi\le\bar A_K,\
		\sum_{k=0}^{K-1}Y_{K,k}^{\Pi}
		>
		C_Y\sigma\sqrt{\bar A_K\,G}
		\Bigg\}
	\Bigg)
	\le \delta_{\rm lin}.
	\end{align}
	Equivalently, on any event implying \(\mathcal P_T(C_P)\), outside an
	additional failure probability \(\delta_{\rm lin}\), the implication
	\[
		A_K^\Pi\le\bar A_K
		\quad\Longrightarrow\quad
		\sum_{k=0}^{K-1}Y_{K,k}^{\Pi}
		\le
		C_Y\sigma\sqrt{\bar A_K\,G}
	\]
	holds simultaneously for every \(K\le T\).
\end{lemma}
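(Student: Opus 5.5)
The plan is to reduce the signed suffix-product sum to a single martingale by factoring out the realized prefix product. On the event that all \(q_t\neq0\), we have \(\Pi_{K,k}=P_K/P_{k+1}\). Hence
\[
\sum_{k=0}^{K-1}Y_{K,k}^{\Pi}=P_K\,M_K,
\qquad
M_K:=\sum_{k=0}^{K-1}\frac{1}{P_{k+1}T_k}\frac{\nabla f(\bx_k)^\top\bu_k\,\bu_k^\top\be_k}{\|\bu_k\|^2}.
\]
The same factorization gives \(A_K^\Pi=P_K^2V_K\), where \(V_K:=\sum_{k<K}P_{k+1}^{-2}T_k^{-2}\zeta_k\|\nabla f(\bx_k)\|^2\).

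The key point is that \(P_{k+1}\) depends on \(\zeta_k\) but not on \(\xi_k\). Therefore the increment \(D_k:=\bz_k^\top\be_k\) has a coefficient vector
\[
\bz_k=\frac{\bu_k\bu_k^\top\nabla f(\bx_k)}{P_{k+1}T_k\|\bu_k\|^2}
\]
that is \(\mathcal F_k^u\)-measurable, with \(\|\bz_k\|^2=P_{k+1}^{-2}T_k^{-2}\zeta_k\|\nabla f(\bx_k)\|^2\). By Proposition~\ref{prop:algorithm_conditional_subg} and Lemma~\ref{lem:subg_projection} applied with \(\mathcal F=\mathcal F_k^u\),
\[
\EE[e^{\theta D_k}\mid\mathcal F_k^u]\le e^{C_{\rm sg}\theta^2\sigma^2\|\bz_k\|^2}.
\]
Consequently \(\exp(\theta M_K-C_{\rm sg}\theta^2\sigma^2V_K)\) is a nonnegative supermartingale along the filtration \(\mathcal F_K^{-}\), since \(\mathcal F_k^u\subseteq\mathcal F_k^+\subseteq\mathcal F_{k+1}^-\). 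Lemma~\ref{lem:future_angle_innovations} is not strictly needed once the product has been split this way; I would cite it only to remark that revealing \(\zeta_{k+1},\dots\) does not affect the conditional law of \(\be_k\). Ville's maximal inequality then gives, for each fixed \(\theta\), a time-uniform bound
\[
\theta M_K\le C_{\rm sg}\theta^2\sigma^2V_K+\log(1/\delta')\qquad\text{for all }K\le T.
\]

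Next I translate the target condition into a condition on \(V_K\). The hypothesis \(A_K^\Pi\le\bar A_K\) reads \(V_K\le\bar A_K/P_K^2\), and on \(\mathcal P_T(C_P)\) this forces \(V_K\le U_T\). The target \(\sum_kY_{K,k}^\Pi=P_KM_K\le C_Y\sigma\sqrt{\bar A_KG}\) is equivalent to \(M_K\le C_Y\sigma\sqrt{\bar A_KG}/P_K\) when \(P_K>0\). If \(P_K<0\), the same argument is applied to \(-M_K\). Positivity of \(q_t\) in fact holds, since \(2d\zeta_t/T_t\le 2d/T_0<1\).

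The standard peeling step follows. Partition the range of the random variance level \(s:=\bar A_K/P_K^2\in[\bar A_K,U_T]\) dyadically, \(s\in(2^{j-1}\bar A_K,2^j\bar A_K]\). On each shell, tune \(\theta_j\asymp\sqrt{G/(\sigma^2 2^j\bar A_K)}\). On that shell \(V_K\le 2^j\bar A_K\), so Ville's inequality gives
\[
M_K\lesssim\sigma\sqrt{2^j\bar A_KG}\asymp\sigma\sqrt{sG}=\sigma\sqrt{\bar A_KG}/P_K,
\]
which is exactly the required bound.

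The main obstacle is that \(\bar A_K\) varies with \(K\), so the shells are indexed by absolute levels \(2^j\) that must cover \([\min_K\bar A_K,\,U_T]\). For each absolute level \(a_j=2^j\) I would use a single supermartingale with \(\theta_j\asymp\sqrt{G/(\sigma^2a_j)}\), valid uniformly in \(K\) by Ville. For given \(K\), the relevant levels run from \(\bar A_K\) up to \(U_T\), giving at most \(\log_2(U_T/\bar A_K)+1\) shells. I would allocate failure probability \(\delta_{\rm lin}\,j^{-2}\) to shell \(j\), counted from the bottom for that \(K\). A cleaner route is to use the monotonicity of the deterministic \(\bar A_K\) in the intended application; in general, one can take a union over a dyadic grid starting at \(\min_K\bar A_K\). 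Either way, the union-bound cost is \(\log(1+\log(e+U_T/\bar A_K))\le CG\) by hypothesis, and it is absorbed into \(C_Y\). The discretization loses only a factor \(\sqrt2\), which is also absorbed into \(C_Y\).
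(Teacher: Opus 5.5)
Your proof is correct, and it takes a genuinely different route from the paper's in two places.

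\emph{Filtration.} You run the exponential supermartingale along the natural filtration, conditioning each increment on \(\mathcal F_k^u\). You may do so because \(P_{k+1}=\prod_{t\le k}q_t\) involves only \(\zeta_0,\dots,\zeta_k\) and is therefore \(\mathcal F_k^u\)-measurable. The paper instead uses the angle-enlarged filtration \(\mathcal H_k=\mathcal F_k^u\vee\sigma(\zeta_{k+1},\dots,\zeta_{T-1})\) and needs Lemma~\ref{lem:future_angle_innovations} to justify it. Once the suffix product has been factored as \(P_K/P_{k+1}\), that enlargement is harmless but never used. Your version is exactly the natural-filtration transformation that Section~\ref{sec:pathwise_template} recommends for non-rotational direction laws, so it is the more portable of the two.

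\emph{Peeling.} The paper, through Lemma~\ref{lem:stitched_product_mg}, peels over the realized intrinsic variance \(\widetilde V_K\) on an infinite dyadic grid below \(U_\star\). This gives a self-normalized boundary \(\widetilde M_K\lesssim\sigma\sqrt{\widetilde V_K(\log(1/\delta)+\log\log(e+U_\star/\widetilde V_K))}\). To replace \(A_K^\Pi\) by \(\bar A_K\), it then needs the transfer inequality \eqref{eq:variance_envelope_loglog_transfer} and a separate zero-variance case. You instead peel over \(s_K=\bar A_K/P_K^2\), which on the failure event dominates \(V_K\) and lies in \([\bar A_K,U_T]\), and you tune \(\theta\) to that level. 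This lands directly on the envelope scale \(\sigma\sqrt{\bar A_K G}\), with a finite grid and no transfer step. The paper's route buys a stronger, variance-adaptive intermediate bound; yours is shorter and tailored exactly to the statement.

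\emph{One bookkeeping correction.} Allocating \(\delta_{\rm lin}j^{-2}\) to shells ``counted from the bottom for that \(K\)'' is not a valid union bound as written. Each Ville event belongs to an absolute level shared by all \(K\), so its failure probability cannot depend on \(K\).
\begin{itemize}
\item Top-down fix: index the budget by absolute level, e.g.\ \(\delta_j\propto\delta_{\rm lin}/(j+1)^2\) for \(a_j=2^{-j}U_T\), \(j\ge0\). The level containing \(s_K\) then satisfies \(j\le\log_2(U_T/\bar A_K)\), which gives exactly your cost \(\log(1+\log(e+U_T/\bar A_K))\le CG\).
\item Bottom-up fix: a grid starting at \(\min_K\bar A_K\) also works. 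Its cost is governed by \(U_T/\min_K\bar A_K\), which the hypothesis still controls because that hypothesis is a maximum over \(K\).
\end{itemize}
Your fallback sentence already points at this fix, and no monotonicity of \(\bar A_K\) is needed.
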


Lemma~\ref{lem:product_linear_uniform} is the concentration step for the
signed linear term in Eq.~\eqref{eq:dec_2}. It retains the true suffix product
\(\Pi_{K,k}\) until martingale cancellation has been exploited. The variance
condition \(A_K^\Pi\le\bar A_K\) is later verified directly from the normalized
angle ratios \(\zeta_k\), the weighted scan, and smoothness.

\subsection{Quadratic Noise Term}

Lemma~\ref{lem:subg_random_direction_square} controls the quadratic-noise
term in Eq.~\eqref{eq:dec_2} through the scalar ratio
\(
\frac{(\bu_k^\top\be_k)^2}{\|\bu_k\|^2},
\)
without imposing a uniform bound on \(\|\be_k\|\).
Equivalently, the proof uses the decomposition into the squared angle between
\(\bu_k\) and \(\be_k\) times \(\|\be_k\|^2\), integrating over the Gaussian
direction before applying the full-vector sub-Gaussian moment bound.
\begin{lemma}
\label{lem:Q_up}
Consider Algorithm~\ref{alg:SA_1} under
Assumptions~\ref{ass:est} and~\ref{ass:SubG}, with fresh independent Gaussian
directions and oracle samples. Let \(0<\delta_K^{(Q)}<1\). Define the event
\begin{equation}
\begin{aligned}
\cE_{Q_K}:= \Bigg\{
\sum_{k=0}^{K-1}
\frac{\rho_{K,k}}{T_k^2}
\cdot
\frac{(\bu_k^\top \be_k)^2}{\|\bu_k\|^2}
\leq
\frac{2\sigma^2}{d}
\sum_{k=0}^{K-1} \frac{\rho_{K,k}}{T_k^2}
+
2\sigma^2
\log\frac{1}{\delta_K^{(Q)}}
\max_{0 \leq k \leq K-1}
\frac{\rho_{K,k}}{T_k^2}
\Bigg\}.
\end{aligned}
\end{equation}
The event \(\cE_{Q_K}\) holds with probability at least
\(1-\delta_K^{(Q)}\).
\end{lemma}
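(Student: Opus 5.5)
Lemma~\ref{lem:Q_up} is the sequential part of Lemma~\ref{lem:subg_random_direction_square} with the choice \(w_k:=\rho_{K,k}/T_k^2\). The plan is therefore to check that this choice meets the hypotheses of that lemma and then read off the event \(\cE_{Q_K}\).

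\textbf{Step 1: the weights are deterministic and nonnegative.} By Eq.~\eqref{eq:rho_kk}, \(\rho_{K,k}\) depends only on \(K\), \(k\), \(T_0\), \(C\) and \(\Gamma_T(\delta)\), and \(T_k=k+T_0\). So for fixed \(K\) the weights \(w_k\) are deterministic and nonnegative. The key point is that the argument never uses the random products \(\Pi_{K,k}\), which would be adapted to future directions; only their deterministic envelopes enter. Setting \(W=\sum_k w_k\) and \(w_{\max}=\max_k w_k\), the right-hand side in \(\cE_{Q_K}\) is exactly \(\frac{2\sigma^2}{d}W+2\sigma^2w_{\max}\log(1/\delta_K^{(Q)})\).

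\textbf{Step 2: the conditional hypotheses hold with \(\mathcal G=\mathcal F_k^{-}\).} By Proposition~\ref{prop:algorithm_conditional_subg}, \(\EE[\be_k\mid\mathcal F_k^-]=0\) and the conditional exponential-moment bound holds. By Algorithm~\ref{alg:SA_1}, \(\bu_k\) is drawn independently of \(\mathcal F_k^-\) and of \(\xi_k\), hence of \(\mathcal F_k^-\vee\sigma(\be_k)\).

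\textbf{Step 3: the supermartingale argument.} If one wants to reprove the sequential version rather than cite it, set \(\lambda=1/(2\sigma^2 w_{\max})\), so that \(\lambda w_k\le 1/(2\sigma^2)\). Let \(Z_k:=(\bu_k^\top\be_k)^2/\|\bu_k\|^2\). Then
\[
M_j:=\exp\Bigl(\sum_{k<j}\lambda w_k\bigl(Z_k-2\sigma^2/d\bigr)\Bigr)
\]
is a supermartingale with respect to \(\mathcal F_j^-\). This follows from Eq.~\eqref{eq:subg_random_direction_square_mgf} applied conditionally at each step, using \(\mathcal F_k^+\subseteq\mathcal F_{k+1}^-\). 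Since \(M_0=1\), we get \(\EE M_K\le1\). Ville's (or Markov's) inequality then gives \(\PP(M_K\ge1/\delta_K^{(Q)})\le\delta_K^{(Q)}\). Rearranging, with \(1/\lambda=2\sigma^2w_{\max}\), yields exactly the bound in \(\cE_{Q_K}\).

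\textbf{Main obstacle.} The only delicate point is the filtration in Step 2. The MGF step must be taken before \(\bu_k\) is revealed, that is, with respect to \(\mathcal F_k^-\) and not \(\mathcal F_k^u\), so that the Gaussian direction can be integrated out. The weights must also not depend on any realized angles, and Step 1 guarantees this.
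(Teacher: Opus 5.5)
Your proposal is correct and follows essentially the same route as the paper. You fix \(K\), take the deterministic positive weights \(w_k=\rho_{K,k}/T_k^2\), and apply the one-step bound of Lemma~\ref{lem:subg_random_direction_square} conditionally on \(\mathcal F_k^{-}\) with \(\lambda=1/(2\sigma^2w_{\max})\). You then iterate the conditioning, where your supermartingale \(M_j\) is the paper's backward tower argument written forward, and finish with Markov's inequality. The only wording to tighten is in Step~2: you need \(\bu_k\) to be independent of the joint field \(\mathcal F_k^{-}\vee\sigma(\xi_k)\), not merely of each piece separately, and the algorithm's sampling rule does provide this.
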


Lemma~\ref{lem:Q_up} controls the nonnegative quadratic stochastic-error term
in the unrolled recursion.

\subsection{Smoothing-Bias Term}

It remains to control the smoothing-bias contribution in
Eq.~\eqref{eq:dec_2}.
\begin{lemma}
\label{lem:alp_term_up}
Let \(\{\bu_k\}_{k=0}^{K-1}\) be fresh independent
\(\mathcal N(0,I_d)\) directions. Let \(c_1=dL^2/(2\mu)\),
\(c_2=4d^2L^3/\mu^2\), and \(0<\delta_K^{(\alpha)}<1\). Define the event
\begin{equation}
\begin{aligned}
\cE_{\alpha_K} :=&
\Bigg\{
\sum_{k=0}^{K-1} \rho_{K,k}
\left(
\frac{c_1}{T_k} + \frac{c_2}{T_k^2}
\right)\norm{\bu_k}^2
\leq
C_\chi d
\Bigg[
\sum_{k=0}^{K-1} \rho_{K,k}
\left(
\frac{c_1}{T_k} + \frac{c_2}{T_k^2}
\right)
\\
&
+
\sqrt{ \log\frac{1}{\delta_K^{(\alpha)}} \cdot \sum_{k=0}^{K-1} \rho_{K,k}^2
	\left(
	\frac{c_1}{T_k} + \frac{c_2}{T_k^2}
	\right)^2}
+
	\log\frac{1}{\delta_K^{(\alpha)}} \max_{0 \leq k \leq K-1} \rho_{K,k}
\left(
\frac{c_1}{T_k} + \frac{c_2}{T_k^2}
\right)
\Bigg]
\Bigg\}.
\end{aligned}
\end{equation}
Then \(\cE_{\alpha_K}\) holds with probability at least
\(1-\delta_K^{(\alpha)}\), where \(C_\chi\ge1\) is a universal numerical
constant.
\end{lemma}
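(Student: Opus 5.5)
The plan is to recognize the left-hand side of \(\cE_{\alpha_K}\) as a weighted sum of independent \(\chi^2_d\) variables with deterministic nonnegative weights, and then apply a Bernstein-type (sub-exponential) tail bound. First I will check that the weights
\[
	w_k:=\rho_{K,k}\left(\frac{c_1}{T_k}+\frac{c_2}{T_k^2}\right),\qquad k=0,\dots,K-1,
\]
are deterministic. This holds because \(\rho_{K,k}\) in Eq.~\eqref{eq:rho_kk} depends only on \(T_0\), \(K\), \(k\) and \(\Gamma_T(\delta)\), and not on the realized directions. The lemma asks only about the law of the fresh Gaussian directions, so no conditioning on the iterates is required. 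In particular, the random quantity \(\sum_k w_k\|\bu_k\|^2\) involves only the \(\bu_k\), which are i.i.d.\ \(\mathcal N(0,I_d)\).

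Next I will write \(\|\bu_k\|^2=\sum_{i=1}^d u_{k,i}^2\), so that
\[
	\sum_k w_k\|\bu_k\|^2=\sum_{k,i} w_k\,g_{k,i}^2
\]
with i.i.d.\ standard normals \(g_{k,i}\). This is a Gaussian quadratic form \(\bm g^\top D\bm g\) with diagonal \(D=\mathrm{diag}(w_k\otimes\mathbf 1_d)\). The Laurent--Massart inequality (equivalently Hanson--Wright or Bernstein for sub-exponential variables) gives, with probability at least \(1-\delta_K^{(\alpha)}\),
\[
	\sum_k w_k\|\bu_k\|^2\le \operatorname{tr}D+2\|D\|_F\sqrt{\log(1/\delta_K^{(\alpha)})}+2\|D\|_{\rm op}\log(1/\delta_K^{(\alpha)}).
\]
Here \(\operatorname{tr}D=d\sum_k w_k\), \(\|D\|_F=\sqrt{d\sum_k w_k^2}\), and \(\|D\|_{\rm op}=\max_k w_k\).

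Finally I will match this bound to the displayed form. Using \(d\ge1\), I bound \(\sqrt d\le d\) in the Frobenius term and \(1\le d\) in the operator-norm term. This puts all three terms inside \(C_\chi d[\cdots]\) with, for example, \(C_\chi=2\), or with a larger universal constant if a generic Bernstein bound is used instead of the sharp Laurent--Massart constants.

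No step is a serious obstacle. The only point needing care is to confirm that \(\rho_{K,k}\) is a deterministic constant, and not the random product \(\Pi_{K,k}\). If it were random, an adaptive (martingale) Bernstein argument would be needed, using that \(\bu_k\) is fresh given the past. Because the lemma is stated with the deterministic \(\rho_{K,k}\), the independent-sum bound suffices, and the slack factor \(d\ge\sqrt d\) absorbs the cross-term constant.
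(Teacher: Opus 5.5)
Your proposal is correct and is essentially the paper's proof. The paper also applies the Laurent--Massart bound to the \(dK\) Gaussian coordinates, with the deterministic weights \(w_k=\rho_{K,k}(c_1/T_k+c_2/T_k^2)\) each repeated \(d\) times, and then uses \(\sqrt d\le d\) and \(1\le d\) to place all three terms inside \(C_\chi d[\cdots]\) (so \(C_\chi=2\) suffices).
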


Lemma~\ref{lem:alp_term_up} bounds the accumulated bias caused by the
finite-difference radius \(\alpha\).

\subsection{Induction Proof of Theorem~\ref{thm:main_1}}

We conclude the analysis by collecting deterministic bounds on
\(\rho_K\) and \(\rho_{K,k}\), assembling all concentration events, and closing
the induction.
\begin{lemma}[Deterministic bounds on $\rho_K$ and $\rho_{K,k}$]
	\label{lem:c_rho}
	Let \(\rho_K\) and \(\rho_{K,k}\) be defined by
	Eqs.~\eqref{eq:rho_k} and \eqref{eq:rho_kk}, respectively. Then, for every
	\(1\le K\le T\) and \(0\le k\le K-1\),
	\begin{equation}
		\label{eq:c_rho}
		\rho_K
		\le
		c_\rho
		\frac{T_0}{T_K},
		\qquad
		\rho_{K,k}
		\le
		c_\rho
		\frac{T_k}{T_K}.
	\end{equation}
	Here \(C>0\) is the universal constant in
	Lemma~\ref{lem:weighted_scan_bounds} and \(c_\rho\) is defined by
	\begin{equation}
		c_\rho
		:=
		2\exp\!\left(
		\frac{2C}{T_0}\Gamma_T(\delta)
		\right).
	\end{equation}
\end{lemma}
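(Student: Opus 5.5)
The plan is a direct deterministic computation. The key step is an integral comparison for the harmonic-type sums \(\sum 1/T_t\) appearing in the exponents of \(\rho_K\) and \(\rho_{K,k}\). No probability is involved: \(\rho_K\) and \(\rho_{K,k}\) are deterministic by Eqs.~\eqref{eq:rho_k}--\eqref{eq:rho_kk}.

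First, since \(t\mapsto 1/(t+T_0)\) is decreasing, for any integers \(0\le a\le b\) I will use
\[
\sum_{t=a}^{b-1}\frac{1}{T_t}\ \ge\ \int_a^{b}\frac{ds}{s+T_0}=\log\frac{T_b}{T_a}.
\]
With \(a=0,b=K\) this gives
\[
\exp\Bigl(-\sum_{t=0}^{K-1}T_t^{-1}\Bigr)\le T_0/T_K,
\]
so
\[
\rho_K\le \exp\bigl(\tfrac{2C}{T_0}\Gamma_T(\delta)\bigr)\,T_0/T_K\le c_\rho T_0/T_K,
\]
using the factor \(2\ge1\) in \(c_\rho\).

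For \(\rho_{K,k}\) the sum starts at \(k+1\). Taking \(a=k+1\), \(b=K\) (when \(k+1\le K-1\)), the same comparison gives
\[
\exp\Bigl(-\sum_{t=k+1}^{K-1}T_t^{-1}\Bigr)\le \frac{T_{k+1}}{T_K}=\frac{T_k+1}{T_K}.
\]
This is where the factor \(2\) in \(c_\rho\) is used. Since \(T_k\ge T_0=32dL/\mu\ge 64\ge1\) (with \(d\ge2\) and \(L\ge\mu\)), we have \(T_k+1\le 2T_k\), hence
\[
\rho_{K,k}\le 2\exp\bigl(\tfrac{2C}{T_0}\Gamma_T(\delta)\bigr)\,T_k/T_K=c_\rho T_k/T_K.
\]

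The boundary case \(k=K-1\) must be handled separately, because there the sum is empty. Then
\[
\rho_{K,K-1}=\exp\bigl(\tfrac{2C}{T_0}\Gamma_T(\delta)\bigr),
\]
and \(T_{K-1}/T_K=T_{K-1}/(T_{K-1}+1)\ge 1/2\) because \(T_{K-1}\ge1\). So again \(\rho_{K,K-1}\le c_\rho T_{K-1}/T_K\).

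The only delicate point is this off-by-one between starting the suffix at \(k+1\) and wanting \(T_k\) in the bound. It is absorbed by the factor \(2\) in \(c_\rho\) together with \(T_0\ge1\). I do not expect any further obstacle.
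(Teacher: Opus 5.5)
Your proposal is correct and follows the paper's proof: the integral lower bound on $\sum 1/T_t$ (the paper's Lemma~\ref{lem:T_sum}) gives $\rho_K\le e^{2C\Gamma_T(\delta)/T_0}\,T_0/T_K$ and $\rho_{K,k}\le e^{2C\Gamma_T(\delta)/T_0}\,T_{k+1}/T_K$, and $T_{k+1}\le 2T_k$ is absorbed by the factor $2$ in $c_\rho$. Your separate treatment of $k=K-1$ is harmless but not needed, since the bound $T_{k+1}/T_K=1$ already covers the empty sum.
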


Lemma~\ref{lem:c_rho} converts the exponential \(\rho\)-weights into the
deterministic ratios \(T_0/T_K\) and \(T_k/T_K\), which are used throughout the
final induction.

\begin{lemma}[Time-uniform auxiliary controls]
\label{lem:time_uniform_controls}
Let the assumptions of Theorem~\ref{thm:main_1} hold and let
\[
	\bar\Delta_k:=\frac{\cC d\Theta_T}{T_k},
	\qquad k=0,\dots,T-1.
\]
Set
\[
	\delta_K^{(Y)}
	=
	\delta_K^{(Q)}
	=
	\delta_K^{(\alpha)}
	=:
	\delta_\star,
	\qquad
	\delta_\star:=\exp\{-\Gamma_T(\delta)\},
	\qquad
	\log\frac1{\delta_\star}=c_\delta\Lambda=\Gamma_T(\delta),
\]
and use the deterministic envelopes \(\bar A_K\) in
Eq.~\eqref{eq:Abar_product_linear}.
Here \(C_\chi\) is the universal constant in Lemma~\ref{lem:alp_term_up}. The
quantities \(\delta_K^{(\cdot)}\) specify thresholds in the fixed-\(K\) event
notation; any fixed numerical allocation is absorbed into the universal
constants \(C,c_\rho,\cC,C_A\). Then, with probability at least \(1-\delta\),
the following controls hold:
\begin{enumerate}
	\item the weighted-suffix events
	\(\cE_{\rho_K}\) and \(\cE_{\rho_{K,k}}\), \(k=0,\dots,K-1\), together
	with the prefix upper-tail event \(\cE_\rho^+\), for every \(K\le T\);
	in addition, the prefix estimate
	\[
		d\sum_{t=0}^{K-1}\frac{\zeta_t}{T_t}
		\le
		C\Lambda\left(1+\frac{c_\delta}{T_0}\right),
		\qquad K=1,\dots,T,
	\]
	holds after enlarging the universal constant \(C\);
	\item the product-linear implication event
	\[
		\mathcal L_K(\bar A_K)
		:=
		\cE_{Y_K}^{\Pi}(\bar A_K)
		\cup
		\{A_K^\Pi>\bar A_K\},
	\]
	for every \(K\le T\);
	\item the terminal relaxed quadratic-noise event \(\cE_{Q_T}\) and the
	terminal relaxed smoothing event \(\cE_{\alpha_T}\).
\end{enumerate}
Consequently, Eq.~\eqref{eq:c_del} below holds for every \(K\le T\):
\begin{equation}\label{eq:c_del}
\max\left\{
\log \frac{1}{\delta_K^{(Y)}},\;
\log \frac{1}{\delta_K^{(Q)}},\;
\log \frac{1}{\delta_K^{(\alpha)}}
\right\}
\leq c_\delta \Lambda.
\end{equation}
\end{lemma}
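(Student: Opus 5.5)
The plan is a union bound over three families of failure events, each given a fixed fraction of \(\delta\) (say \(\delta/3\)), with the numerical allocation absorbed into the universal constants \(C,c_\rho,\cC,C_A\) as the statement permits.

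\textbf{Item 1.} We invoke Lemma~\ref{lem:cE_rho} with confidence \(\delta/3\). Replacing \(\delta\) by \(\delta/3\) changes \(\Gamma_T\) by at most \(\log 3\), which is absorbed by enlarging \(C\). This gives \(\cE_{\rho_K}\), all \(\cE_{\rho_{K,k}}\), and \(\cE_\rho^+\) simultaneously for every \(K\le T\). The prefix estimate is a rewriting of \(\cE_\rho^+\): since \(\Gamma_T(\delta)=c_\delta\Lambda\),
\[
C\Lambda+C\frac{\Gamma_T(\delta)}{T_0}=C\Lambda\left(1+\frac{c_\delta}{T_0}\right).
\]

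\textbf{Item 2.} We apply Lemma~\ref{lem:product_linear_uniform} with \(\delta_{\rm lin}=\delta/3\) and the envelopes \(\bar A_K\) of Eq.~\eqref{eq:Abar_product_linear}. Two things must be checked.

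First, the product-envelope event \(\mathcal P_T(C_P)\) must be implied by the Item~1 event. Under \(T_0=32dL/\mu\ge 32d\) we have \(2d\zeta_t/T_t\le 1/16\), so \(-\log q_t\le c\cdot 2d\zeta_t/T_t\) for a numerical \(c\). Hence
\[
P_K^{-2}\le \exp\Bigl(C'd\sum_{t<K}\zeta_t/T_t\Bigr)\le (T+T_0)^{C_P}\exp\bigl(C_P\Gamma_T(\delta)/T_0\bigr)
\]
by the prefix upper-tail event.

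Second, the admissibility condition on \(G\) must hold. Since the \(\bar A_K\) are proportional to \(T_K^{-2}\), the ratio \(U_T/\bar A_K\) is at most \((T+T_0)^{C_P+2}e^{C_P\Gamma_T/T_0}\). Its double logarithm is \(O(\log\Lambda+\log\Gamma_T)\le C\Gamma_T(\delta)\), so we may take \(G=\Gamma_T(\delta)+\log 3\lesssim \log(1/\delta_\star)\). The bound \(C_Y\sigma\sqrt{\bar A_K G}\) then matches the threshold in \(\cE^\Pi_{Y_K}(\bar A_K)\) after enlarging \(C_Y\). This yields \(\mathcal L_K(\bar A_K)\) for all \(K\) on the intersection of the two events.

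\textbf{Item 3.} We use Lemma~\ref{lem:Q_up} and Lemma~\ref{lem:alp_term_up} only at \(K=T\), each at confidence \(\delta/6\). The thresholds are stated with \(\log(1/\delta_\star)=\Gamma_T(\delta)\ge\log(6/\delta)\) up to an additive constant absorbed into \(C_\chi\) and \(\cC\). No time-uniformity is needed here because only terminal events are claimed.

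Equation~\eqref{eq:c_del} is then immediate from the choice \(\delta_K^{(\cdot)}=\delta_\star\).

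The main obstacle is the inclusion \(\text{Item 1}\subseteq\mathcal P_T(C_P)\). It needs the elementary bound \(-\log(1-x)\le 2x\) for \(x\le1/2\), together with \(\zeta_t\le1\) and \(T_t\ge T_0\ge 32d\). The loglog admissibility check for \(G\) needs similar care.
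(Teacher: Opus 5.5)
Your proposal is correct and follows essentially the same route as the paper: a union bound over the scan event of Lemma~\ref{lem:cE_rho}, the stitched product-linear event of Lemma~\ref{lem:product_linear_uniform}, and the two terminal events at $K=T$. The product-linear step uses $\mathcal P_T(C_P)$, which you deduce from the prefix scan via $-\log(1-x)\le 2x$, and the double-logarithm admissibility of $G$, which you check through $U_T/\bar A_K\le (T+T_0)^{C_P+2}e^{C_P\Gamma_T(\delta)/T_0}$.

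The only difference is the failure-level bookkeeping. The paper uses $\delta/6,\delta/6$ and runs the terminal events directly at $\delta_\star=\delta/(eJ_T(e+T_0))\le\delta/6$. In your Item~3 no additive-constant absorption is needed, because $\Gamma_T(\delta)\ge\log(6/\delta)$ holds exactly and the events $\cE_{Q_T},\cE_{\alpha_T}$ are monotone in their logarithmic threshold.
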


Lemma~\ref{lem:time_uniform_controls} places the weighted-scan event, the
linear-martingale implication, and the terminal nonnegative controls on a
single event of probability at least \(1-\delta\). The main induction is
carried out on this event.

\begin{remark}[Nonstandard elements of the time-uniform proof]
The two ingredients in Lemma~\ref{lem:time_uniform_controls} that depart from
standard analyses are the one-sided weighted lower scan in
Lemma~\ref{lem:cE_rho} and the product-martingale stitching in
Lemma~\ref{lem:product_linear_uniform}, which invokes
Lemma~\ref{lem:stitched_product_mg} to handle intrinsic variance. The
quadratic-noise and smoothing estimates are applied only at the terminal index
\(T\), while the product-linear variance proxy is verified from the accompanying
prefix angle control. Consequently, the proof avoids a direct
union bound over terminal times \(K=1,\dots,T\); for fixed problem parameters,
the confidence cost remains \(\log(1/\delta)+\log\log(e+T+T_0)\).
\end{remark}

\medskip

We now close the proof of Theorem~\ref{thm:main_1} by induction on the terminal
index.

\begin{proof}[\textbf{Proof of Theorem~\ref{thm:main_1}}]
	Let \(\mathcal G_T\) denote the event supplied by
	Lemma~\ref{lem:time_uniform_controls}, so that
	\(\Pr(\mathcal G_T)\ge1-\delta\). On \(\mathcal G_T\), all weighted-suffix
	and prefix angle controls hold, the product-linear implication
	\(\mathcal L_K(\bar A_K)\) holds for every \(K\le T\), and the terminal
	nonnegative controls \(\cE_{Q_T}\) and \(\cE_{\alpha_T}\) hold. In addition,
	all auxiliary logarithmic factors are bounded by
	\(c_\delta\Lambda=\Gamma_T(\delta)\), and the deterministic estimates in
	Lemma~\ref{lem:c_rho} are available for every \(K\le T\).

	We work deterministically on \(\mathcal G_T\) and proceed by induction on
	\(K\).
	\proofstep{Base case.}
	For \(K=0\), the initial-condition term in the definition of \(\cC\) in
	Eq.~\eqref{eq:cC}, together with \(c_\rho\ge1\), gives
	\[
	\Delta_0 \le \frac{\cC d \Theta_T}{T_0}.
	\]

	\proofstep{Induction hypothesis.}
	Fix \(K\in\{1,\dots,T\}\), and assume that, for every
	\(k=0,\dots,K-1\),
	\begin{equation}
		\Delta_k \le \frac{\cC d \Theta_T}{T_k}.
	\end{equation}

	On \(\mathcal G_T\), the unrolled recursion
	Eq.~\eqref{eq:dec_2} holds. We bound its four contributions in turn.

	\proofstep{Initial term.}
	Equation~\eqref{eq:c_rho} gives
	\(\rho_K\Delta_0\le c_\rho(T_0/T_K)\Delta_0\). Because
	\(\cC \ge 8c_\rho T_0 \Delta_0/(d\Theta_T)\) by Eq.~\eqref{eq:cC},
	\begin{equation}
		\label{eq:init_absorb}
		\rho_K \Delta_0
		\leq
		c_\rho \frac{T_0}{T_K}\Delta_0
		\le \frac{\cC}{8}\frac{d \Theta_T}{T_K}.
	\end{equation}

	\proofstep{Terminal comparison for nonnegative suffix terms.}
	For \(k<K\le T\), the definitions of \(\rho_{K,k}\) and \(\rho_{T,k}\)
	give
	\[
		\rho_{K,k}
		=
		\rho_{T,k}
		\exp\left(\sum_{t=K}^{T-1}\frac1{T_t}\right).
	\]
	Since
	\[
		\sum_{t=K}^{T-1}\frac1{T_t}
		\le
		1+\log\frac{T_T}{T_K},
	\]
	there is a universal constant \(C\) such that
	\begin{equation}\label{eq:terminal_suffix_compare}
		\rho_{K,k}\le C\frac{T_T}{T_K}\rho_{T,k},
		\qquad 0\le k<K\le T,
	\end{equation}
	and hence
	\begin{equation}\label{eq:terminal_suffix_compare_sq}
		\rho_{K,k}^2\le C
		\left(\frac{T_T}{T_K}\right)^2\rho_{T,k}^2,
		\qquad 0\le k<K\le T.
	\end{equation}
	Thus any nonnegative suffix-weighted sum at terminal time \(K\) is bounded
	by the corresponding terminal-\(T\) sum, with the appropriate power of
	\(T_T/T_K\).

	\proofstep{Signed linear product term.}

	The linear term in Lemma~\ref{lem:dec_2} is multiplied by the actual suffix
	product \(\Pi_{K,k}\), not by the deterministic upper bound
	\(\rho_{K,k}\). Since this term is signed, the replacement
	\(\Pi_{K,k}\le\rho_{K,k}\) is not order-preserving at the level of the
	linear sum. Instead, the event in
	Lemma~\ref{lem:time_uniform_controls} includes the bound from
	Lemma~\ref{lem:product_linear_uniform}, which controls the signed
	product-linear sum with the actual product \(\Pi_{K,k}\). It remains to
	verify the variance-proxy condition \(A_K^\Pi\le\bar A_K\).

	Set
	\[
		\bar\Delta_k=\frac{\cC d\Theta_T}{T_k},
		\qquad k=0,\dots,T-1,
	\]
	so that \(\Delta_k\le\bar\Delta_k\) throughout the induction range
	\(k<K\). Convexity and \(L\)-smoothness give the standard gradient bound
	\[
		f\!\left(\bx-\frac1L\nabla f(\bx)\right)
		\le
		f(\bx)-\frac{1}{2L}\|\nabla f(\bx)\|^2,
		\qquad
		\|\nabla f(\bx)\|^2\le2L\bigl(f(\bx)-f(\bx^*)\bigr),
	\]
	where the second inequality uses optimality of \(\bx^*\). Using this bound,
	\(\Pi_{K,k}\le\rho_{K,k}\), and the induction hypothesis,
	\[
	A_K^\Pi
	\le
		2L
		\sum_{k=0}^{K-1}
		\frac{\rho_{K,k}^2}{T_k^2}\bar\Delta_k\zeta_k.
	\]
	The squared terminal comparison in
	Eq.~\eqref{eq:terminal_suffix_compare_sq}, followed by
	\(\rho_{T,k}\le c_\rho T_k/T_T\), gives
	\[
		A_K^\Pi
		\le
		C
		\frac{L\cC c_\rho^2 d\Theta_T}{T_K^2}
		\sum_{k=0}^{T-1}\frac{\zeta_k}{T_k}.
	\]
	By the prefix estimate in Lemma~\ref{lem:time_uniform_controls},
	\[
		d\sum_{k=0}^{T-1}\frac{\zeta_k}{T_k}
		\le
		C\Lambda\left(1+\frac{c_\delta}{T_0}\right).
	\]
	Hence
	\[
		A_K^\Pi
		\le
		C
		\frac{L\cC c_\rho^2\Theta_T\Lambda}{T_K^2}
		\left(1+\frac{c_\delta}{T_0}\right)
		\le
		\bar A_K,
	\]
	after choosing the universal constant \(C_A\) in
	Lemma~\ref{lem:time_uniform_controls} large enough. Hence
	\(\mathcal L_K(\bar A_K)\) reduces to
	\(\cE_{Y_K}^{\Pi}(\bar A_K)\).
	Therefore, on \(\mathcal G_T\),
	\begin{equation}
	\label{eq:Y_up}
	\frac{4d}{\mu}
	\sum_{k=0}^{K-1}
	\Pi_{K,k}
	\frac{
	\nabla f(\bx_k)^\top\bu_k\,\bu_k^\top\be_k
	}{
	T_k\|\bu_k\|^2
	}
	\le
	C\frac{d\sigma c_\rho\Theta_T}{\mu T_K}
	\sqrt{
	L\cC
	\left(1+\frac{c_\delta}{T_0}\right)}
	\stackrel{\eqref{eq:cC}}{\le}
	\frac{\cC d\Theta_T}{8T_K}.
	\end{equation}

	\proofstep{Quadratic-noise term.}

	On \(\mathcal G_T\), the event \(\cE_{Q_T}\) holds. Thus, using
	Eq.~\eqref{eq:c_del}, Lemma~\ref{lem:c_rho}, and Lemma~\ref{lem:T_sum},
	\begin{align*}
	\sum_{k=0}^{T-1} \frac{ \rho_{T,k} }{ T_k^2} \cdot \frac{|\bu_k^\top \be_k|^2}{\norm{\bu_k}^2}
	&\le
	\frac{2\sigma^2}{d}
	\sum_{k=0}^{T-1}\frac{\rho_{T,k}}{T_k^2}
	+
	2\sigma^2
	\log\frac1{\delta_T^{(Q)}}
	\max_{0\le k\le T-1}\frac{\rho_{T,k}}{T_k^2}
	\\
	&\le
	\frac{2\sigma^2 c_\rho\Lambda}{dT_T}
	+
	\frac{2\sigma^2 c_\rho c_\delta\Lambda}{T_TT_0}.
	\end{align*}
	Therefore Eq.~\eqref{eq:terminal_suffix_compare} implies, for the current
	induction terminal time \(K\),
	\begin{equation}
		\label{eq:Q_up}
	\begin{aligned}
		&\frac{32Ld^2}{\mu^2}
		\sum_{k=0}^{K-1}
		\frac{\rho_{K,k}}{T_k^2}
		\cdot
		\frac{|\bu_k^\top \be_k|^2}{\|\bu_k\|^2}
		\leq
		C\frac{Ld\sigma^2c_\rho\Lambda}{\mu^2T_K}
		\left(
		1
		+
		\frac{d c_\delta}{T_0}
		\right)
		\stackrel{\eqref{eq:cC}}{\leq}
		\frac{\cC \cdot d\cdot \Theta_T}{8T_K}.
	\end{aligned}
	\end{equation}

	\proofstep{Smoothing-bias term.}
	Let
	\[
		c_1:=\frac{dL^2}{2\mu},
		\qquad
		c_2:=\frac{4d^2L^3}{\mu^2},
		\qquad
		a_k:=
		\frac{c_1}{T_k}
		+
		\frac{c_2}{T_k^2}.
	\]
	We invoke Lemma~\ref{lem:alp_term_up} only at the terminal index \(T\). On
	\(\mathcal G_T\), write
	\[
		A_T:=\sum_{k=0}^{T-1}\rho_{T,k}a_k,\qquad
		B_T:=\sum_{k=0}^{T-1}\rho_{T,k}^2a_k^2,\qquad
		M_T:=\max_{0\le k\le T-1}\rho_{T,k}a_k.
	\]
	Then Lemma~\ref{lem:alp_term_up} and Eq.~\eqref{eq:c_del} give
	\[
		\sum_{k=0}^{T-1}\rho_{T,k}a_k\|\bu_k\|^2
		\le
		Cd\left(A_T+\sqrt{\Gamma_T(\delta)B_T}
		+\Gamma_T(\delta)M_T\right).
	\]
	Using \(\rho_{T,k}\le c_\rho T_k/T_T\),
	Lemma~\ref{lem:T_sum}, \(T_T\ge T_0\), and
	\(\Gamma_T(\delta)=c_\delta\Lambda\), the three terms satisfy
	\[
	\begin{aligned}
		dA_T
		&\le
		Cc_\rho d
		\left(c_1+\frac{c_2\Lambda}{T_0}\right),
		\\
		d\sqrt{\Gamma_T(\delta)B_T}
		&\le
		Cc_\rho d\sqrt{c_\delta\Lambda}
		\left(\frac{c_1}{\sqrt{T_0}}+\frac{c_2}{T_0^{3/2}}\right),
		\\
		d\Gamma_T(\delta)M_T
		&\le
		Cc_\rho d c_\delta\Lambda
		\frac{1}{T_0}
		\left(c_1+\frac{c_2}{T_0}\right).
	\end{aligned}
	\]
	Substituting \(c_1=dL^2/(2\mu)\), \(c_2=4d^2L^3/\mu^2\), and
	\(T_0=32dL/\mu\), and using \(d,\Lambda\ge1\) and
	\(\sqrt{c_\delta\Lambda}\le \Lambda(1+c_\delta)\), yields
	\[
		\sum_{k=0}^{T-1}\rho_{T,k}a_k\|\bu_k\|^2
		\le
		Cc_\rho d^2\Lambda(1+c_\delta)
		\left(1+\frac{L^2}{\mu}+\frac{L^3}{\mu^2}\right),
	\]
	after increasing the universal numerical constant if necessary. Hence, by
	Eq.~\eqref{eq:terminal_suffix_compare},
	\begin{equation}
	\label{eq:alp_up}
	\alpha^2
	\sum_{k=0}^{K-1}\rho_{K,k}
	\left(
	\frac{dL^2}{2\mu T_k}
	+
	\frac{4d^2L^3}{\mu^2 T_k^2}
	\right)\|\bu_k\|^2
	\leq
	C\alpha^2\frac{T_T}{T_K}
	c_\rho d^2\Lambda(1+c_\delta)
	\left(1+\frac{L^2}{\mu}+\frac{L^3}{\mu^2}\right)
	\leq
	\frac{\cC d \Theta_T}{8 T_K},
	\end{equation}
	where the last inequality uses \(\alpha^2=1/(dT_T)\),
	\(\Theta_T=\Lambda\Gamma_T(\delta)\), and the last term in the definition
	of \(\cC\) in Eq.~\eqref{eq:cC}, after increasing the universal numerical
	constant if necessary.

	\proofstep{Conclusion of the induction step.}

	Combining Eq.~\eqref{eq:dec_2}, \eqref{eq:init_absorb},
	\eqref{eq:Y_up}, \eqref{eq:Q_up},
	and \eqref{eq:alp_up}, we conclude that
	\[
	\Delta_K
	\le
	\left(
	\frac{\cC}{8}
	+
	\frac{\cC}{8}
	+
	\frac{\cC}{8}
	+
	\frac{\cC}{8}
	\right)
	\frac{ d\Theta_T}{T_K}
	\le
	\cC \frac{ d\Theta_T}{T_K}.
	\]
	This proves the induction step.
	Since the induction is deterministic on \(\mathcal G_T\), it holds
	simultaneously for all \(K\le T\). Taking \(K=T\) yields the first inequality
	in Eq.~\eqref{eq:improved_rate_weighted}. To obtain the displayed
	big-\(\mathcal O\) form, multiply each term in the maximum defining \(\cC\)
	by \(d\Theta_T/(T+T_0)\), use
	\(\Theta_T=\Lambda\Gamma_T(\delta)\),
	\(T_0=32dL/\mu\), and \(c_\rho\ge1\), and absorb fixed problem parameters
	and universal numerical factors into the implicit constant.
\end{proof}

\medskip
\begin{proof}[Proof of Corollary~\ref{cor:main_1}]
	By Eq.~\eqref{eq:improved_rate_weighted}, with probability at least
	\(1-\delta\),
	\[
		f(\bx_T)-f(\bx^*)
		\le
		C
		\left(
		\frac{c_\rho T_0\Delta_0}{T+T_0}
		+
		c_\rho^2
		\frac{d\Lambda\Gamma_T(\delta)}{T+T_0}
		\left(1+\frac{\Gamma_T(\delta)}{T_0}\right)
		\right).
	\]
	By the definition of \(c_\rho\), the assumption
	\(T_0\gtrsim\Gamma_T(\delta)\) gives \(c_\rho=\mathcal O(1)\). Absorbing
	this numerical bound into the constant yields
	Eq.~\eqref{eq:simplified_rate}. The final
	\(\widetilde{\mathcal O}(d/T)\) display is obtained from the same estimate
	after fixed problem parameters and logarithmic factors are suppressed.
\end{proof}

\section{A General Pathwise Framework}
\label{sec:pathwise_template}

The proof above should not be read as a concentration inequality tailored only
to one Gaussian finite-difference theorem, or even only to zeroth-order
algorithms. The main contribution is a pathwise framework for stochastic
recursions with random contraction and signed perturbations. Whenever an error
process admits the recursion below and the associated scan, martingale, variance,
and terminal controls can be verified, the same proof modules apply. Zeroth-order
direction laws and oracle perturbation models are important instances of this
general structure.

\subsection{The Framework Recursion}

The basic object is the following one-step inequality:
\begin{equation}
	\label{eq:generic_pathwise_recursion}
	\Delta_{t+1}
	\le
	(1-\gamma_tZ_t)\Delta_t+\ell_t+Q_t+B_t.
\end{equation}
Here \(Z_t\) is the realized random contraction strength, \(\ell_t\) is a
signed perturbation term, and \(Q_t\) and \(B_t\) are nonnegative residual terms.
In zeroth-order applications, these objects correspond respectively to the
directional alignment, the signed oracle-noise contribution, the quadratic-noise
term, and the finite-difference or smoothing bias. The purpose of the analysis
is not merely to bound each one-step term, but to control the entire path
generated by
Eq.~\eqref{eq:generic_pathwise_recursion}. For the algebraic unrolling of the
upper bound, the realized factors
\[
	q_t:=1-\gamma_tZ_t
\]
must be nonnegative. For the product-martingale transformation based on
\(P_K/P_{k+1}\), one normally assumes the stronger stability condition
\[
	0<\underline q\le q_t\le1
\]
almost surely, so that the prefix products and their reciprocals are globally
defined. If stability is available only on a high-probability event, the same
argument should be applied to the stopped process with
\(\tau=\inf\{t:q_t\notin[\underline q,1]\}\); on the stability event the
stopping time is never reached. In the main theorem the condition is
deterministic, following from \(T_0=32dL/\mu\), \(L\ge\mu\), and
\(0\le \zeta_t\le1\):
\[
	\gamma_tZ_t
	=
	\frac{2d}{T_t}\zeta_t
	\le
	\frac{2d}{T_0}
	=
	\frac{\mu}{16L}
	\le
	\frac1{16},
	\qquad
	q_t\in[15/16,1].
\]

In the Gaussian same-sample model studied in the main theorem,
\[
	\gamma_t=\frac{2d}{T_t},
	\qquad
	Z_t=\zeta_t
	=
	\frac{(\bu_t^\top\nabla f(\bx_t))^2}
	{\|\bu_t\|^2\|\nabla f(\bx_t)\|^2},
\]
and
\[
	\ell_t
	=
	\frac{4d}{\mu T_t}
	\frac{\nabla f(\bx_t)^\top\bu_t\,\bu_t^\top\be_t}{\|\bu_t\|^2}.
\]
The terms \(Q_t\) and \(B_t\) are respectively the quadratic
stochastic-gradient error and the smoothing-bias contribution displayed in
Eq.~\eqref{eq:dec_2}. Other models enter the framework by changing the law of
\(Z_t\), the scale and structure of \(\ell_t\), or the nonnegative terms
\(Q_t\) and \(B_t\).

\subsection{Unrolling and Proof Modules}

Unrolling Eq.~\eqref{eq:generic_pathwise_recursion} gives, for every terminal
time \(K\),
\begin{equation}
	\label{eq:generic_unrolling}
	\Delta_K
	\le
	\Pi^{\rm gen}_{K,-1}\Delta_0
	+
	\sum_{k=0}^{K-1}\Pi^{\rm gen}_{K,k}\ell_k
	+
	\sum_{k=0}^{K-1}\Pi^{\rm gen}_{K,k}Q_k
	+
	\sum_{k=0}^{K-1}\Pi^{\rm gen}_{K,k}B_k,
\end{equation}
where
\[
	\Pi^{\rm gen}_{K,k}
	:=
	\prod_{t=k+1}^{K-1}(1-\gamma_tZ_t),
	\qquad
	\Pi^{\rm gen}_{K,-1}
	:=
	\prod_{t=0}^{K-1}(1-\gamma_tZ_t).
\]
Empty products are equal to one.
Thus the proof decomposes into four reusable modules.

\begin{enumerate}[(i)]
	\item \emph{Random contraction module.} Prove the weighted scan estimates
	for the variables \(Z_t\). A lower suffix scan converts many weak random
	directional contractions into uniform control of the suffix products
	\(\Pi^{\rm gen}_{K,k}\). An upper prefix or energy scan is also needed to
	control inverse-product envelopes and the intrinsic variance in the signed
	product-linear module.

	\item \emph{Signed product-linear module.} Control
	\(\sum_k\Pi^{\rm gen}_{K,k}\ell_k\) while retaining the realized product
	weights until martingale cancellation has been used. Replacing
	\(\Pi^{\rm gen}_{K,k}\) by deterministic envelopes, or taking absolute
	values too early, destroys the cancellation that is needed for the sharp
	last-iterate scale. For Gaussian or spherical directions, one may use the
	angle-enlarged filtration justified by
	Lemma~\ref{lem:future_angle_innovations}. For direction laws whose
	conditional angle distribution depends on the current gradient orientation,
	such as Rademacher or coordinate directions, the natural-filtration product
	transformation described below is the safer formulation. In every case, the
	required conditional centering is post-direction and pre-noise: conditioning
	only before the current direction is drawn is generally not enough, because
	the product denominator depends on that direction.

	\item \emph{Nonnegative terminal module.} Control the \(Q_t\) and \(B_t\)
	sums by terminal nonnegative concentration estimates. Here deterministic
	product envelopes can be used safely because the terms are nonnegative.

	\item \emph{Induction module.} Combine the preceding bounds on a common
	high-probability event and close the last-iterate induction in \(K\).
\end{enumerate}

The natural-filtration form of the signed module is worth making explicit.
Set
\[
	P_0:=1,
	\qquad
	P_K:=\prod_{t=0}^{K-1}q_t,
	\qquad
	q_t:=1-\gamma_tZ_t.
\]
Then
\[
	\Pi^{\rm gen}_{K,k}=\frac{P_K}{P_{k+1}},
	\qquad
	\sum_{k=0}^{K-1}\Pi^{\rm gen}_{K,k}\ell_k
	=
	P_K\sum_{k=0}^{K-1}\frac{\ell_k}{P_{k+1}}.
\]
Let \(\mathcal G_k\) denote the sigma-field generated by the information
available after the current direction is sampled and before the current oracle
noise is sampled. The
natural-filtration transformation requires
\[
	q_k,\ P_{k+1}\ \text{are }\mathcal G_k\text{-measurable},
	\qquad
	\EE[\ell_k\mid\mathcal G_k]=0.
\]
Then
\[
	\EE\left[
	\frac{\ell_k}{P_{k+1}}
	\middle|\mathcal G_k
	\right]=0.
\]
Equivalently, with the interlaced filtration that records \(\mathcal G_k\)
before the \(k\)-th noise draw and the post-noise history after it, the
variables \(\ell_k/P_{k+1}\) are martingale differences. If, for some
\(\mathcal G_k\)-measurable variance proxy \(\mathfrak v_k\),
\[
	\EE\!\left[
	\exp(\lambda\ell_k)
	\middle|\mathcal G_k
	\right]
	\le
	\exp(C\lambda^2\mathfrak v_k),
\]
then \(P_{k+1}\) is fixed under \(\mathcal G_k\), and the transformed
increment has variance proxy \(\mathfrak v_k/P_{k+1}^2\). This is the
product-martingale structure used by the signed module.
This formulation avoids revealing future contractions and is the appropriate
route for non-rotational direction laws.
The Gaussian proof in this paper uses an angle-enlarged version because the
future beta angles have a conditional law independent of the current oracle
noise, but that independence is a special feature of rotationally invariant
directions.

In the present paper, Lemma~\ref{lem:weighted_scan_bounds} supplies the
random contraction module, Lemma~\ref{lem:product_linear_uniform} supplies the
signed product-linear module, the quadratic-noise and smoothing estimates
supply the terminal module, and the proof of Theorem~\ref{thm:main_1} closes
the induction. The same decomposition is the reason the argument can be
transferred to other random-contraction settings.

\subsection{Changing the Direction Law}

The first natural extension changes the direction distribution. The pathwise
architecture is reusable, but the scan, normalized-projection concentration,
and direction-norm bookkeeping must be verified for the new direction law.
For the contraction scan, define the predictable unit vector
\[
	\bv_t
	:=
	\begin{cases}
	\nabla f(\bx_t)/\|\nabla f(\bx_t)\|, & \nabla f(\bx_t)\ne0,\\
	\be_1, & \nabla f(\bx_t)=0.
	\end{cases}
\]
Here and below, \(\be_i\) with an integer subscript denotes the \(i\)-th
standard basis vector, not the oracle-noise vector \(\be_t\).
This convention is harmless: if \(\nabla f(\bx_t)=0\), then strong convexity
implies \(\Delta_t=0\), so the contraction term multiplying \(\Delta_t\) is
irrelevant at that step. The convention is useful for discrete direction laws,
where the continuous nonvanishing-gradient argument used for Gaussian
directions need not apply. For a general randomized direction mechanism, assume
\(\|\bu_t\|>0\) almost surely, or define \(Z_t\) on the exceptional
zero-direction event by a harmless convention. Then write
\[
	Z_t
	:=
	\frac{\langle \bv_t,\bu_t\rangle^2}{\|\bu_t\|^2}.
\]
The contraction part of the framework asks for two complementary scan
estimates, stated for constants \(c,C>0\) and nonnegative weights
\(w_t,\omega_t\) that are \(\mathcal F_t^-\)-measurable. The first is a
time-uniform weighted lower suffix scan. The nonnegative quantities
\(\mathrm{Fluc}_{j,K}(\delta)\) and \(\mathrm{Fluc}^{+}_{K}(\delta)\) below are
fluctuation budgets supplied by the direction-specific scan lemma; their
dependence on the chosen weights, horizon, and direction law is suppressed in
the notation:
\begin{equation}
	\label{eq:generic_direction_scan}
	\sum_{t=j}^{K-1}w_tZ_t
	\ge
	\frac{c}{d}\sum_{t=j}^{K-1}w_t
	-
	\mathrm{Fluc}_{j,K}(\delta),
	\qquad 0\le j<K\le T.
\end{equation}
For suffix-product comparison, the relevant weights are the contraction
weights \(w_t=\gamma_t\), up to harmless deterministic constants; one takes
\(j=k+1\) to control \(\Pi^{\rm gen}_{K,k}\) and \(j=0\) for the initial
product. The second is an upper prefix or energy scan, for example
\begin{equation}
	\label{eq:generic_direction_upper_scan}
	\sum_{t=0}^{K-1}\omega_tZ_t
	\le
	\frac{C}{d}\sum_{t=0}^{K-1}\omega_t
	+
	\mathrm{Fluc}^{+}_{K}(\delta),
	\qquad 1\le K\le T,
\end{equation}
with weights \(\omega_t\) dictated by the inverse-product envelope or by the
intrinsic variance calculation. In the present proof these upper controls are
used, for example, to bound \(P_K^{-2}\) and to verify
\(A_K^\Pi\le\bar A_K\). Thus Eq.~\eqref{eq:generic_direction_scan} alone is
not enough for the framework.

This is not the only direction-dependent verification. The quadratic-noise
module also uses a normalized-projection estimate, which in angularly isotropic
cases is based on
\[
	\EE\left[
	\frac{\bu_t\bu_t^\top}{\|\bu_t\|^2}
	\middle|\mathcal F_t^-
	\right]
	=
	\frac{I_d}{d},
\]
or an appropriate analogue. The smoothing-bias module depends on the moments
or bounds of \(\|\bu_t\|^2\). Thus changing the sketch law changes the scan
lemma and may also change the projection and norm estimates, even though the
suffix-product architecture remains the same.

\paragraph{Spherical directions.}
If \(\bu_t\) is uniform on the sphere, then \(Z_t\) has the same
\(\mathrm{Beta}(1/2,(d-1)/2)\) law as the normalized Gaussian direction. Since
the norm-normalized stepsize already removes the Gaussian radius from the main
contraction factor, the spherical version is essentially a direct replacement
with simpler norm bookkeeping. The same rotational invariance also preserves
the future-angle independence used by the angle-enlarged product martingale.

\paragraph{Rademacher directions.}
For \(\bu_t\in\{\pm1\}^d\) with independent signs,
\[
	\EE[Z_t\mid\mathcal F_t^-]=\frac1d.
\]
The exact beta law and rotational invariance are no longer available, but
Khintchine-type bounds control the conditional moments of
\(\langle \bv_t,\bu_t\rangle\), and hence the weighted differences
\(Z_t-1/d\). More explicitly, since \(\|\bu_t\|^2=d\),
\[
	\EE[Z_t^2\mid\mathcal F_t^-]
	=
	\frac{3-2\sum_{i=1}^d v_{t,i}^4}{d^2}
	\le
	\frac3{d^2},
\]
and
\[
	\|Z_t-1/d\|_{\psi_1\mid\mathcal F_t^-}
	\lesssim
	\frac1d.
\]
This \(1/d\)-scale sub-exponential control is the relevant Bernstein scale for
the prefix and suffix scans; a bounded-increment argument using only
\(0\le Z_t\le1\) can lose the desired dimension dependence. This changes
constants and the scan proof, but not the suffix-product unrolling or the
signed product-martingale module. However, the conditional law of \(Z_t\) now
depends on the orientation \(\bv_t\). Therefore the future-angle independence
argument used for Gaussian directions should not be assumed; the
natural-filtration product transformation is the appropriate replacement.

\paragraph{Random coordinate directions.}
For \(\bu_t=\pm \be_{I_t}\), with \(I_t\) uniform on \(\{1,\dots,d\}\),
\[
	Z_t=v_{t,I_t}^2,
	\qquad
	\EE[Z_t\mid\mathcal F_t^-]=\frac1d.
\]
The mean contraction is still \(1/d\), but the fluctuations depend on the
coordinate coherence of \(\bv_t\):
\[
	\EE[Z_t^2\mid\mathcal F_t^-]
	=
	\frac1d\sum_{i=1}^d v_{t,i}^4,
	\qquad
	0\le Z_t\le \|\bv_t\|_\infty^2.
\]
The correct scan is therefore coherence-sensitive, with variance and
maximum-increment terms depending on quantities such as
\(\sum_i v_{t,i}^4\) and \(\|\bv_t\|_\infty^2\). When the descent direction is
spread across coordinates, the resulting scan resembles the Gaussian or
Rademacher case. When the direction is highly coherent, contraction arrives in
coordinate-dependent bursts, and the high-probability constants must reflect
that geometry.

This is the sketching interpretation of the framework. The current paper
proves the scan for Gaussian sketches because the beta law gives a clean,
dimension-explicit route. Other sketches should be analyzed by proving their
own versions of Eqs.~\eqref{eq:generic_direction_scan} and
\eqref{eq:generic_direction_upper_scan}; the later modules can then be reused,
preferably through the natural-filtration product transformation above.

\subsection{Additive Function-Value Noise as a Perturbation Module}

The same framework also accommodates changes in the oracle perturbation module:
one rederives the signed term, the nonnegative terminal terms, and the
corresponding intrinsic variance proxy, while keeping the random-contraction
unrolling intact. Additive function-value noise is a representative example.
Suppose, schematically, that
\[
	Y_t^\pm=f(\bx_t\pm\alpha_t\bu_t)+\upsilon_t^\pm,
\]
where, conditionally on the current history and on \(\bu_t\), the noises
\(\upsilon_t^+\) and \(\upsilon_t^-\) are mean-zero sub-Gaussian random
variables with common scale \(\tau_{\upsilon}\). For simplicity, assume that
the difference \(\upsilon_t^+-\upsilon_t^-\) is conditionally
sub-Gaussian at the corresponding scale; this is the case, for example, when
the two additive noises are conditionally independent. Assume also that
\(\alpha_t\) is \(\mathcal F_t^u\)-measurable, \(\alpha_t>0\) a.s., and that
the current additive noise is independent of future directions. Define
\[
	\nu_t
	:=
	\frac{\upsilon_t^+-\upsilon_t^-}{2\alpha_t}.
\]
The noise assumption can be stated directly as
\[
	\EE\left[
	\exp(\lambda\nu_t)
	\middle|\mathcal F_t^u
	\right]
	\le
	\exp\left(
	C\lambda^2\frac{\tau_{\upsilon}^2}{\alpha_t^2}
	\right),
	\qquad \lambda\in\mathbb R.
\]
Equivalently,
\[
	\|\nu_t\|_{\psi_2\mid\mathcal F_t^u}
	\lesssim
	\frac{\tau_{\upsilon}}{\alpha_t},
	\qquad
	\left\|
	\nu_t^2-\EE[\nu_t^2\mid\mathcal F_t^u]
	\right\|_{\psi_1\mid\mathcal F_t^u}
	\lesssim
	\frac{\tau_{\upsilon}^2}{\alpha_t^2}.
\]
Consequently,
\begin{equation}
	\label{eq:additive_noise_vector_second_moment}
	\EE\left[
	\left\|
	\frac{\upsilon_t^+-\upsilon_t^-}{2\alpha_t}\bu_t
	\right\|^2
	\middle|\,\mathcal F_t^u
	\right]
	\le
	C\frac{\tau_{\upsilon}^2\|\bu_t\|^2}{\alpha_t^2}.
\end{equation}
Moreover, conditional sub-Gaussianity of \(\nu_t\) implies the conditional
sub-exponential tail estimates for \(\nu_t^2\) needed by the nonnegative
quadratic-noise module; a second-moment estimate such as
Eq.~\eqref{eq:additive_noise_vector_second_moment} is not by itself sufficient
for the high-probability \(Q_t\) bound.
Let
\[
	s_t:=\langle\nabla f(\bx_t),\bu_t\rangle,
	\qquad
	r_t^2:=\|\bu_t\|^2.
\]
The finite-difference estimator can be written as
\[
	\widehat{\bg}_t
	=
	\bigl(
	s_t+b_t+\nu_t
	\bigr)\bu_t,
	\qquad
	|b_t|\le \frac{L\alpha_t}{2}r_t^2.
\]
Here \(b_t\) is the scalar finite-difference bias term.
By smoothness,
\[
\begin{aligned}
	\Delta_{t+1}
	\le\;&
	\Delta_t
	-\eta_t s_t(s_t+b_t+\nu_t)
	+
	\frac{L\eta_t^2r_t^2}{2}(s_t+b_t+\nu_t)^2.
\end{aligned}
\]
Using
\[
	(s+b+\nu)^2\le4s^2+2b^2+4\nu^2,
	\qquad
	-sb\le\frac{s^2+b^2}{2},
\]
and assuming \(\eta_t\le1/(8Lr_t^2)\), we get
\[
\begin{aligned}
	\Delta_{t+1}
	\le\;&
	\Delta_t
	-\frac{\eta_t}{4}s_t^2
	-\eta_t\nu_t s_t
	+
	2L\eta_t^2r_t^2\nu_t^2
	+
	\left(\frac{\eta_t}{2}+L\eta_t^2r_t^2\right)b_t^2.
\end{aligned}
\]
For the norm-normalized choice used below, this stepsize condition follows from
\[
	T_t=t+T_0,\qquad T_0\ge\frac{32dL}{\mu},
	\qquad
	L\eta_t\|\bu_t\|^2=\frac{4dL}{\mu T_t}\le\frac18.
\]
With the convention for \(\bv_t\) above, the identity
\(s_t^2=\|\nabla f(\bx_t)\|^2\|\bu_t\|^2Z_t\) holds whenever
\(\nabla f(\bx_t)\ne0\). If \(\nabla f(\bx_t)=0\), then strong convexity gives
\(\Delta_t=0\) and \(s_t=0\), so the same contraction inequality is trivial.
The Polyak--Lojasiewicz consequence of strong convexity,
\(\|\nabla f(\bx_t)\|^2\ge2\mu\Delta_t\), therefore gives
\[
	\Delta_{t+1}
	\le
	\left(
	1-\frac{\mu\eta_t}{2}\|\bu_t\|^2Z_t
	\right)\Delta_t
	-\eta_t\nu_t s_t
	+Q_t+B_t,
\]
where one may define
\[
	Q_t:=2L\eta_t^2\|\bu_t\|^2\nu_t^2.
\]
With the norm-normalized stepsize
\(\eta_t=4d/[\mu T_t\|\bu_t\|^2]\),
\[
	\Delta_{t+1}
	\le
	\left(1-\frac{2d}{T_t}Z_t\right)\Delta_t
	-\eta_t\nu_t s_t
	+Q_t+B_t,
	\qquad
	Q_t
	=
	\frac{32Ld^2}{\mu^2T_t^2}
	\frac{\nu_t^2}{\|\bu_t\|^2}.
\]
Moreover, the bias bound gives
\[
	B_t
	:=
	\left(\frac{\eta_t}{2}+L\eta_t^2\|\bu_t\|^2\right)b_t^2
	\le
	\frac{dL^2\alpha_t^2\|\bu_t\|^2}{2\mu T_t}
	+
	\frac{4d^2L^3\alpha_t^2\|\bu_t\|^2}{\mu^2T_t^2}.
\]
After unrolling, the additive-noise product-linear term becomes
\begin{equation}
	\label{eq:additive_product_linear_term}
	\sum_{k=0}^{K-1}
	\Pi^{\rm gen}_{K,k}
	\bigl(
	-\eta_k\nu_k s_k
	\bigr).
\end{equation}
This term has the same structural features as the signed suffix-product term
in the main proof: it is signed, it is coupled with the current random
direction, it is weighted by future random contractions, and it cannot be
safely bounded term-by-term in absolute value.
For non-rotational direction laws, independence of the future raw directions
from the current additive noise is not enough to justify revealing future
contractions, because their conditional laws may depend on future gradient
orientations. In that case the natural-filtration product transformation above
should be used.

The product-martingale module still applies, but with a different intrinsic
variance proxy. Up to universal sub-Gaussian constants, a valid proxy is
bounded by
\[
	A^{\Pi}_{K,\mathrm{add}}
	\lesssim
	\sum_{k=0}^{K-1}
	(\Pi^{\rm gen}_{K,k})^2
	\eta_k^2
	\frac{\tau_{\upsilon}^2}{\alpha_k^2}
	\bigl(\nabla f(\bx_k)^\top\bu_k\bigr)^2.
\]
For the norm-normalized stepsize used in this paper, this yields
\[
	A^{\Pi}_{K,\mathrm{add}}
	\lesssim
	\frac{d^2\tau_{\upsilon}^2}{\mu^2}
	\sum_{k=0}^{K-1}
	\frac{(\Pi^{\rm gen}_{K,k})^2}{T_k^2\alpha_k^2}
	\frac{\|\nabla f(\bx_k)\|^2Z_k}{\|\bu_k\|^2}.
\]
The angle scan alone does not control the whole variance proxy when the
smoothing schedule or inverse radii vary with the current direction. It must be
combined with a separate inverse-radius and smoothing-schedule control. What
changes is the scale: additive function-value noise introduces the expected
\(\alpha_k^{-2}\) amplification, and exact norm normalization introduces an
additional \(1/\|\bu_k\|^2\) factor in both this variance proxy and the
quadratic term \(Q_t\). For Gaussian directions this factor is a real
technical issue:
\[
	\EE\|\bu_t\|^{-2}=\frac1{d-2},
	\qquad d>2,
\]
and the expectation is infinite at \(d=2\). Moreover, the inverse radius has
polynomial rather than exponential upper tails, so a lower-tail event alone can
introduce polynomial dependence on \(T/\delta\). A formal additive-noise theorem
should therefore handle this factor by a separate lower-tail control for
\(\|\bu_t\|\), by a clipped or regularized normalization, or by a stepsize rule
whose denominator is bounded away from zero. The statistical rate and optimal
smoothing schedule may therefore change, but the pathwise proof architecture
does not.

\subsection{Summary of the Framework}

The general workflow is therefore: derive Eq.~\eqref{eq:generic_pathwise_recursion}
with stable nonnegative contraction factors, prove the lower and upper weighted
scan estimates appropriate to the direction law, verify the filtration
compatibility for the signed noise, control the signed product-linear term
with a product martingale using the correct intrinsic variance, bound the
nonnegative terminal terms, and close the last-iterate induction. In this
sense, the Gaussian same-sample theorem is one instance of a broader framework
for random contraction recursions rather than an isolated concentration
argument.

\section{Conclusion}

We established a direct high-probability last-iterate guarantee for the
standard same-sample two-point Gaussian method with a norm-normalized stepsize.
Under conditional sub-Gaussian stochastic-gradient noise, the theorem applies
for \(d\ge2\) and keeps the product-weight factor
\(c_\rho=\exp\{O(\Gamma_T(\delta)/T_0)\}\) explicit. In the clean-rate regime
\(T_0\gtrsim\Gamma_T(\delta)\), the last iterate satisfies
\[
	f(\bx_T)-f(\bx^*)
	=
	\widetilde{\mathcal O}\!\left(\frac{d}{T}\right)
\]
with probability at least \(1-\delta\), up to the explicit problem parameters
and logarithmic factors in Theorem~\ref{thm:main_1} and
Corollary~\ref{cor:main_1}. The confidence dependence is logarithmic, and the
argument avoids both bounded-noise truncation and expectation-to-confidence
conversion.

The proof also gives a reusable pathwise decomposition. Weighted lower and upper
scans convert random one-direction progress into aggregate contraction and
provide the prefix or energy bounds needed by variance envelopes. A
product-martingale argument controls the signed linear error without discarding
cancellation, while terminal nonnegative estimates handle the quadratic-noise
and smoothing-bias contributions. Section~\ref{sec:pathwise_template} records
this decomposition as a general pathwise framework for stochastic recursions with
random contraction and signed perturbations: it makes explicit the stability,
predictability, filtration, variance, and terminal-control conditions under
which the same proof strategy applies.

This framework separates the Gaussian and zeroth-order-specific ingredients from
the parts of the analysis that are structural. For a new model, the main tasks
are to derive a recursion of the same form, prove the appropriate lower and
upper weighted scans for the realized contraction variables, verify the
martingale structure of the signed perturbation, and control the nonnegative
terminal terms and intrinsic variance. In zeroth-order applications, changing
the direction law requires replacing the Gaussian beta scan by suitable
direction-specific estimates, while additive function-value noise illustrates
how a different oracle perturbation changes the signed linear term, the
nonnegative quadratic term, and the intrinsic variance. These observations
identify the main technical work needed for future high-probability guarantees
for any stochastic recursion that fits the random-contraction template.

\pagebreak
\bibliography{ref}
\bibliographystyle{apalike2}

\appendix

\section{Useful Lemmas}

This appendix collects the concentration tools used in the proof. We state the
standard inequalities in the form required above and include the necessary
short derivations.

\begin{lemma}[Laurent--Massart bound~\citep{Laurent2000Adaptive}]
\label{lem:chi_all}
	Let \(X_1,\dots,X_K\stackrel{\mathrm{i.i.d.}}{\sim}\mathcal N(0,1)\), and
	let \(w_1,\dots,w_K\ge0\) be fixed weights. With probability at least
	\(1-\delta\),
	\begin{equation*}
		\sum_{k=1}^K w_kX_k^2
		\le
		\sum_{k=1}^K w_k
		+2\sqrt{
		\left(\sum_{k=1}^K w_k^2\right)
		\log\left(\frac1\delta\right)}
		+2\max_{1\le k\le K}w_k\log\left(\frac1\delta\right).
	\end{equation*}
	In particular, if \(X\sim\chi^2(k)\) with \(k>0\), then, for every
	\(\tau>0\),
	\begin{align*}
		\Pr\left(X\le k-2\sqrt{k\tau}\right)\le e^{-\tau}.
	\end{align*}
\end{lemma}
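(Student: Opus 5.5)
The statement to prove is the Laurent--Massart bound, Lemma~\ref{lem:chi_all}. I would prove it by the Chernoff method applied to the log-moment generating function of a weighted sum of squared Gaussians.

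\emph{Upper tail.} Set \(S=\sum_k w_k(X_k^2-1)\). For \(0\le\lambda<1/(2\max_k w_k)\), independence and the explicit formula \(\EE e^{sX^2}=(1-2s)^{-1/2}\) give
\[
\log\EE e^{\lambda S}=\sum_k\Bigl(-\lambda w_k-\tfrac12\log(1-2\lambda w_k)\Bigr).
\]
The key scalar inequality is \(-u-\tfrac12\log(1-2u)\le u^2/(1-2u)\) for \(0\le u<1/2\). It follows by comparing power series: the left side equals \(\sum_{j\ge2}(2u)^j/(2j)\), and each coefficient is dominated by the corresponding coefficient of \(u^2\sum_{j\ge0}(2u)^j\).

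Writing \(a^2=\sum_k w_k^2\) and \(b=\max_k w_k\), this yields \(\log\EE e^{\lambda S}\le \lambda^2a^2/(1-2\lambda b)\). This is a sub-gamma bound with variance factor \(2a^2\) and scale \(2b\). The standard Cramér--Chernoff computation for sub-gamma variables (optimize over \(\lambda\), or invert the Legendre transform) then gives
\[
\Pr\bigl(S\ge 2a\sqrt{x}+2bx\bigr)\le e^{-x}.
\]
Setting \(x=\log(1/\delta)\) gives the first display.

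\emph{Lower tail.} Apply the same argument to \(-S\) with \(\lambda>0\) and unit weights. Now \(\log\EE e^{-\lambda(X^2-1)}=\lambda-\tfrac12\log(1+2\lambda)\le\lambda^2\), using the series bound \(\log(1+2\lambda)\ge 2\lambda-2\lambda^2\). Hence \(\Pr(\sum_k(1-X_k^2)\ge t)\le\exp(-t^2/(4k))\). Choosing \(t=2\sqrt{k\tau}\) gives the \(\chi^2(k)\) lower-tail claim. For non-integer \(k>0\), I would use the Gamma moment generating function \((1+2\lambda)^{-k/2}\) directly, which gives the same bound.

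\emph{Main obstacle.} The only delicate step is the sub-gamma inversion, that is, verifying that \(\inf_\lambda\bigl[\lambda^2a^2/(1-2\lambda b)-\lambda t\bigr]\le -x\) at \(t=2a\sqrt x+2bx\). I would handle it by the explicit choice
\[
\lambda=\frac{\sqrt{x}}{a+2b\sqrt{x}},\qquad\text{so that}\qquad 1-2\lambda b=\frac{a}{a+2b\sqrt x}.
\]
Substituting, the exponent becomes \(\tfrac{a x}{a+2b\sqrt x}-\sqrt x\,\tfrac{2a\sqrt x+2bx}{a+2b\sqrt x}=-x\).
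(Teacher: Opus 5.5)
Your argument is correct and is essentially the original Laurent--Massart proof, which the paper cites instead of proving. It uses the log-MGF bound $-u-\tfrac12\log(1-2u)\le u^2/(1-2u)$ to get a sub-gamma bound with variance factor $2\sum_k w_k^2$ and scale $2\max_k w_k$, inverts it by an explicit choice of $\lambda$, and handles the lower tail through $\lambda-\tfrac12\log(1+2\lambda)\le\lambda^2$. You are also right to prove the $\chi^2$ lower tail as its own Chernoff estimate, valid for non-integer $k$ via the Gamma MGF, since despite the phrase ``in particular'' it does not follow from the first display.
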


\begin{lemma}[Ville's inequality]
\label{lem:ville}
Let \(\{Z_n\}_{n=0}^{N}\) be a nonnegative supermartingale. Then, for every
\(y>0\),
\[
\mathbb P\left(\max_{0\le n\le N}Z_n\ge y\right)
\le
\frac{\mathbb E[Z_0]}{y}.
\]
In particular, if \(Z_0=1\) almost surely, the right-hand side equals \(1/y\).
\end{lemma}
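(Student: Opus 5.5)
The statement to prove is Ville's inequality for a nonnegative supermartingale \(\{Z_n\}_{n=0}^N\) on a finite horizon. I would prove it by the standard stopping-time argument, which reduces the maximal bound to Markov's inequality at a bounded stopping time.

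Fix \(y>0\) and define the first-passage time
\[
\tau:=\min\{n\le N: Z_n\ge y\},
\]
with \(\tau:=N\) if no such \(n\) exists. This is a stopping time bounded by \(N\). On the event \(\{\max_{0\le n\le N}Z_n\ge y\}\) we have \(Z_\tau\ge y\). Because \(Z_\tau\ge0\) everywhere, this gives
\[
y\,\mathbf 1\{\max_{n\le N}Z_n\ge y\}\le Z_\tau .
\]
Taking expectations yields \(y\,\mathbb P(\max_n Z_n\ge y)\le \EE[Z_\tau]\).

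It then remains to show \(\EE[Z_\tau]\le\EE[Z_0]\). This is the optional stopping theorem for supermartingales at a bounded stopping time. To keep the argument self-contained, I would verify it directly through the stopped process \(Z_{n\wedge\tau}\). Its increments are
\[
Z_{(n+1)\wedge\tau}-Z_{n\wedge\tau}=\mathbf 1\{\tau>n\}(Z_{n+1}-Z_n).
\]
The indicator \(\mathbf 1\{\tau>n\}\) is \(\mathcal F_n\)-measurable, so the supermartingale property gives
\[
\EE\bigl[\mathbf 1\{\tau>n\}(Z_{n+1}-Z_n)\mid\mathcal F_n\bigr]\le0.
\]
Integrability holds because each \(Z_n\) is integrable and there are finitely many terms. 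Summing over \(n=0,\dots,N-1\) gives \(\EE[Z_\tau]=\EE[Z_{N\wedge\tau}]\le\EE[Z_0]\).

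Combining the two displays gives \(\mathbb P(\max_n Z_n\ge y)\le \EE[Z_0]/y\). The special case \(Z_0=1\) a.s. then reads \(1/y\). No step is a real obstacle. The only points needing care are that the maximum is attained at \(\tau\) on the event in question, which is ensured by defining \(\tau\) as the first index with \(Z_n\ge y\), and that nonnegativity is used to drop the complementary event.
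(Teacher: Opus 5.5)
Your argument is correct and complete. The key steps are the first-passage time \(\tau\), the pointwise bound \(y\,\mathbf 1\{\max_{n\le N} Z_n\ge y\}\le Z_\tau\) obtained from nonnegativity, and optional stopping at the bounded time \(\tau\) via the increments \(\mathbf 1\{\tau>n\}(Z_{n+1}-Z_n)\); this is the standard proof. The paper states this lemma without proof as a classical tool, so there is nothing to compare against: your derivation supplies the textbook argument the paper relies on. The finite-horizon form you prove is exactly what is invoked in Lemmas~\ref{lem:stitched_product_mg} and~\ref{lem:weighted_scan_bounds}.
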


We also use the following standard decomposition of the two-point stochastic
zeroth-order estimator.
\begin{lemma}
	\label{lem:g_decom}
	Suppose that \(f(\cdot;\xi)\) is \(L\)-smooth. Then the stochastic
	zeroth-order estimator \(\bg(\bx;\xi)\) defined in Eq.~\eqref{eq:sg}
	satisfies
	\begin{equation}\label{eq:sg1}
		\bg(\bx;\xi)
		=
		\bu\bu^\top\nabla f(\bx;\xi)
		+ \beta \cdot \bu,
	\end{equation}
	where \(|\beta|\le(L\alpha/2)\norm{\bu}^2\).
\end{lemma}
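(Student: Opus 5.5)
The plan is a direct second-order Taylor bound along the direction $\bu$, applied separately at $\bx+\alpha\bu$ and $\bx-\alpha\bu$, using only $L$-smoothness of $\phi:=f(\cdot;\xi)$. Set $h(s):=\phi(\bx+s\bu)$ for $s\in\RR$. Since $\nabla\phi$ is $L$-Lipschitz, $h'(s)=\bu^\top\nabla\phi(\bx+s\bu)$ is Lipschitz with constant $L\|\bu\|^2$. The standard descent-lemma argument (integrating $h'$ over $[0,s]$) then gives
\[
\bigl|h(s)-h(0)-s\,h'(0)\bigr|\le\frac{L\|\bu\|^2}{2}s^2 .
\]
Applying this at $s=\alpha$ and $s=-\alpha$ yields
\[
\phi(\bx\pm\alpha\bu)=\phi(\bx)\pm\alpha\,\bu^\top\nabla\phi(\bx)+r_\pm,
\qquad
|r_\pm|\le\frac{L\alpha^2}{2}\|\bu\|^2 .
\]

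Next I subtract the two expansions. The zeroth-order terms $\phi(\bx)$ cancel, and the linear terms add to $2\alpha\,\bu^\top\nabla\phi(\bx)$. Dividing by $2\alpha$ gives
\[
\frac{\phi(\bx+\alpha\bu)-\phi(\bx-\alpha\bu)}{2\alpha}
=\bu^\top\nabla\phi(\bx)+\beta,
\qquad
\beta:=\frac{r_+-r_-}{2\alpha}.
\]
Multiplying by $\bu$ recovers Eq.~\eqref{eq:sg1}, because $(\bu^\top\nabla\phi(\bx))\bu=\bu\bu^\top\nabla\phi(\bx)$. The triangle inequality gives
\[
|\beta|\le\frac{|r_+|+|r_-|}{2\alpha}\le\frac{L\alpha}{2}\|\bu\|^2 ,
\]
which is exactly the claimed constant.

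The argument is essentially routine. The only point needing care is the constant: bounding each remainder separately by $L\alpha^2\|\bu\|^2/2$ and then averaging gives exactly $L\alpha/2$, with no loss. A subtler cancellation of the second-order terms is unnecessary, since $\phi$ is only assumed $C^{1,1}$. Smoothness of $\phi$ is needed for almost every $\xi$, as in Assumption~\ref{ass:est}. The identity then holds pointwise for each such $\xi$, which is all that Lemma~\ref{lem:dec_1} uses, where it is applied with $\beta_t$ defined as above.
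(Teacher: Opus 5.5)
Your proof is correct and is essentially the paper's argument. The paper writes the central difference as \(\frac12\int_{-1}^{1}\bu^\top\nabla f(\bx+s\alpha\bu;\xi)\,ds\) and bounds \(\beta\) by \(\frac12\int_{-1}^{1}L|s|\alpha\|\bu\|^2\,ds=\frac{L\alpha}{2}\|\bu\|^2\). Splitting that integral at \(s=0\) gives exactly your one-sided remainders \(r_+/(2\alpha)\) and \(-r_-/(2\alpha)\), with the same constant.
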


\begin{proof}
	By the fundamental theorem of calculus,
	\[
	\frac{f(\bx+\alpha\bu;\xi)-f(\bx-\alpha\bu;\xi)}{2\alpha}
	=
	\frac12\int_{-1}^{1}
	\bu^\top\nabla f(\bx+s\alpha\bu;\xi)\,ds.
	\]
	Hence
	\[
	\frac{f(\bx+\alpha\bu;\xi)-f(\bx-\alpha\bu;\xi)}{2\alpha}
	=
	\bu^\top\nabla f(\bx;\xi)+\beta,
	\]
	where
	\[
	\beta
	:=
	\frac12\int_{-1}^{1}
	\bu^\top\bigl(\nabla f(\bx+s\alpha\bu;\xi)-\nabla f(\bx;\xi)\bigr)\,ds.
	\]
	By \(L\)-smoothness,
	\[
	|\beta|
	\le
	\frac12\int_{-1}^{1}
	L|s|\alpha\|\bu\|^2\,ds
	=
	\frac{L\alpha}{2}\|\bu\|^2.
	\]
	Multiplying the scalar central difference by \(\bu\) gives
	Eq.~\eqref{eq:sg1}.
\end{proof}

\begin{lemma}
\label{lem:T_sum}
	Let \(T_0>1\), let \(T_k=k+T_0\) for \(k\ge0\), and fix integers
	\(0\le k<K\). Then
	\begin{equation}
		\log\frac{T_K}{T_k}
		\leq
		\sum_{t=k}^{K-1}\frac{1}{t+T_0}
		\le
		1+\log\frac{T_K}{T_k}
		\le
		1+\log T_K	,
	\end{equation}
	and
	\begin{equation}
		\sum_{t=k}^{K-1}\frac{1}{(t+T_0)^2}
		\le
		\frac{2}{T_k}.
	\end{equation}
\end{lemma}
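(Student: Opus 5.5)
The plan is an integral comparison for the decreasing function \(x\mapsto 1/x\) on \([T_0,\infty)\), followed by elementary bookkeeping. Since \(1/(t+T_0)\) is decreasing in \(t\), each summand satisfies
\[
	\int_{t+T_0}^{t+1+T_0}\frac{dx}{x}\le\frac1{t+T_0}\le\int_{t-1+T_0}^{t+T_0}\frac{dx}{x}.
\]
The second inequality is valid for \(t\ge k+1\), and also for \(t=k\) when \(T_k-1>0\). That condition holds because \(T_k\ge T_0>1\).

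For the lower bound, sum the left inequality over \(t=k,\dots,K-1\). This telescopes to \(\int_{T_k}^{T_K}dx/x=\log(T_K/T_k)\).

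For the upper bound, separate the first term \(1/T_k\le 1\) (using \(T_k\ge T_0>1\)). Bound the remaining terms \(t=k+1,\dots,K-1\) by \(\int_{T_k}^{T_{K-1}}dx/x\le\log(T_K/T_k)\). The last inequality in the chain, \(1+\log(T_K/T_k)\le 1+\log T_K\), follows from \(\log T_k\ge\log T_0>0\).

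For the squared sum, use the same device with \(1/x^2\). Bound the first term by \(1/T_k^2\le 1/T_k\), since \(T_k>1\). Bound the rest by \(\int_{T_k}^{\infty}dx/x^2=1/T_k\). Together these give at most \(2/T_k\).

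An alternative for this part is the telescoping bound \(1/(t+T_0)^2\le 1/(t+T_0-1)-1/(t+T_0)\) for \(t\ge k+1\), combined with the same treatment of the first term. Either route works.

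There is no real obstacle here. The only point needing care is the boundary term \(t=k\): the upper integral comparison would otherwise require \(T_k-1>0\), and we handle it either through \(T_0>1\) or by isolating that term, which is exactly where the additive constant \(1\) in the first bound and the factor \(2\) in the second come from.
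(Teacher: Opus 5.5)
Your proof is correct and follows essentially the same route as the paper: a lower integral comparison for the decreasing function $1/x$ (which the paper writes as $\int_k^K dx/(x+T_0)$), an upper comparison after isolating the first term $1/T_k\le 1$ (respectively $1/T_k^2\le 1/T_k$), and the use of $T_k>1$, exactly as you do. You also state explicitly that $\log T_k\ge 0$, which justifies the last inequality $1+\log(T_K/T_k)\le 1+\log T_K$; the paper leaves this step implicit.
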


\begin{proof}
	The function \(x\mapsto 1/(x+T_0)\) is positive and decreasing. Therefore
\[
\sum_{t=k}^{K-1}\frac1{t+T_0}
\ge
\int_k^K\frac{dx}{x+T_0}
=
\log\frac{T_K}{T_k}.
\]
For the upper bound,
\[
\sum_{t=k}^{K-1}\frac1{t+T_0}
\le
\frac1{T_k}
+
\int_k^K\frac{dx}{x+T_0}
\le
1+\log\frac{T_K}{T_k}
\le
1+\log T_K.
\]
Similarly,
\[
\sum_{t=k}^{K-1}\frac1{(t+T_0)^2}
\le
\frac1{T_k^2}
+
\int_k^\infty\frac{dx}{(x+T_0)^2}
=
\frac1{T_k^2}+\frac1{T_k}
\le
\frac2{T_k},
\]
because \(T_k>1\).
\end{proof}

\section{Properties of the Beta Distribution}

\begin{lemma}\label{lem:beta_projection}
	Let \(\bx\sim\mathcal N(0,\bm I_d)\), and let
	\(\ba\in\mathbb R^d\) be a fixed unit vector. Define
	\begin{equation*}
		Y:=\frac{\langle\bx,\ba\rangle^2}{\|\bx\|_2^2}.
	\end{equation*}
	For \(d\ge2\),
	\(Y\sim\mathrm{Beta}(1/2,(d-1)/2)\); for \(d=1\), \(Y=1\)
	almost surely.
\end{lemma}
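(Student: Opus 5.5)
We use the polar decomposition of the standard Gaussian together with rotational invariance. By rotational invariance of \(\mathcal N(0,\bm I_d)\), for any orthogonal matrix \(Q\) with \(Q\ba=\be_1\) (here \(\be_1\) is the first standard basis vector), the vector \(Q\bx\) is again standard Gaussian. Moreover,
\[
\frac{\langle\bx,\ba\rangle^2}{\|\bx\|^2}=\frac{\langle Q\bx,\be_1\rangle^2}{\|Q\bx\|^2}.
\]
Hence \(Y\) has the same law as \(X_1^2/\sum_{i=1}^d X_i^2\) with \(X_1,\dots,X_d\) i.i.d.\ \(\mathcal N(0,1\)). We may therefore assume \(\ba=\be_1\) throughout. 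The case \(d=1\) is then immediate: \(Y=X_1^2/X_1^2=1\) on the probability-one event \(X_1\ne0\).

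For \(d\ge2\), write \(A:=X_1^2\) and \(B:=\sum_{i=2}^d X_i^2\). Then \(A\sim\chi^2(1)=\mathrm{Gamma}(1/2,2)\) and \(B\sim\chi^2(d-1)=\mathrm{Gamma}((d-1)/2,2)\), and \(A\) and \(B\) are independent because they are functions of disjoint sets of independent coordinates. Also \(A+B>0\) almost surely. The classical beta--gamma relation says that if \(A\sim\mathrm{Gamma}(a,\theta)\) and \(B\sim\mathrm{Gamma}(b,\theta)\) are independent, then \(A/(A+B)\sim\mathrm{Beta}(a,b)\). Applying it with \(a=1/2\) and \(b=(d-1)/2\) gives \(Y\sim\mathrm{Beta}(1/2,(d-1)/2)\).

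To keep the argument self-contained, I would prove this relation directly. Take the joint density proportional to \(a^{1/2-1}b^{(d-1)/2-1}e^{-(a+b)/2}\) on \((0,\infty)^2\) and change variables to \(s=a+b\) and \(y=a/(a+b)\), so that \(a=sy\) and \(b=s(1-y)\), with Jacobian \(s\). The joint density of \((s,y)\) then factors as
\[
s^{d/2-1}e^{-s/2}\cdot y^{-1/2}(1-y)^{(d-3)/2}.
\]
Integrating out \(s\) leaves a density for \(y\) proportional to \(y^{-1/2}(1-y)^{(d-3)/2}\) on \((0,1)\), which is the \(\mathrm{Beta}(1/2,(d-1)/2)\) density. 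The factorization also shows that \(Y\) is independent of \(\|\bx\|^2\); this fact is not needed here.

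There is no serious obstacle. The only points that need care are the justification of the rotation reduction, namely that the law of \(\bx\) is invariant under orthogonal maps, and recording that the exceptional null events \(\{\bx=0\}\) and, for \(d=1\), \(\{X_1=0\}\) do not affect the distributional statement.
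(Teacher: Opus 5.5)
Your proposal is correct and follows the paper's proof. You reduce to \(\ba=\be_1\) by rotational invariance, then write \(Y=A/(A+B)\) with independent gamma variables of shapes \(1/2\) and \((d-1)/2\) and a common scale. The only addition is that you check the beta--gamma relation by an explicit change of variables, which the paper simply cites as standard.
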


\begin{proof}
	By rotational invariance of the standard Gaussian distribution, we may assume
\(\ba=\be_1\). Then
\[
Y=\frac{X_1^2}{X_1^2+\cdots+X_d^2}.
\]
For \(d=1\), this ratio is one almost surely. For \(d\ge2\),
\(X_1^2\sim\chi^2_1\) and \(\sum_{i=2}^dX_i^2\sim\chi^2_{d-1}\) are
independent. Equivalently, these two variables are independent gamma random
variables with common scale parameter and shapes \(1/2\) and \((d-1)/2\).
The ratio of the first gamma variable to their sum therefore has the
\(\mathrm{Beta}(1/2,(d-1)/2)\) distribution.
\end{proof}

\begin{lemma}[Moments of the Beta distribution]
	\label{lem:beta}
	Let \(X\sim\mathrm{Beta}(\alpha,\beta)\), where
	\(\alpha,\beta>0\). Its mean and variance are
	\[
	\mathbb E[X]=\frac{\alpha}{\alpha+\beta},
	\qquad
	\mathrm{Var}(X)
	=
	\frac{\alpha\beta}{(\alpha+\beta)^2(\alpha+\beta+1)}.
	\]
	Moreover, for every integer \(m\ge1\), its \(m\)-th raw moment is
	\[
	\mathbb E[X^m]
	=
	\prod_{k=0}^{m-1}\frac{\alpha+k}{\alpha+\beta+k}.
	\]
\end{lemma}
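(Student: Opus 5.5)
The plan is to derive the moment formula first and then read off the mean and variance from it. I would start from the density
\[
p(x)=\frac{x^{\alpha-1}(1-x)^{\beta-1}}{B(\alpha,\beta)},\qquad 0<x<1,
\]
with \(B(\alpha,\beta)=\Gamma(\alpha)\Gamma(\beta)/\Gamma(\alpha+\beta)\). For an integer \(m\ge1\), multiplying by \(x^m\) and integrating gives
\[
\mathbb E[X^m]=\frac{B(\alpha+m,\beta)}{B(\alpha,\beta)}
=\frac{\Gamma(\alpha+m)\,\Gamma(\alpha+\beta)}{\Gamma(\alpha)\,\Gamma(\alpha+\beta+m)}.
\]
Next I would apply the recursion \(\Gamma(z+1)=z\Gamma(z)\) a total of \(m\) times, to both \(\Gamma(\alpha+m)/\Gamma(\alpha)\) and \(\Gamma(\alpha+\beta+m)/\Gamma(\alpha+\beta)\). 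This turns each ratio into a finite rising product, and dividing them yields
\[
\mathbb E[X^m]=\prod_{k=0}^{m-1}\frac{\alpha+k}{\alpha+\beta+k}.
\]

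Setting \(m=1\) immediately gives the mean \(\alpha/(\alpha+\beta)\). Setting \(m=2\) gives
\[
\mathbb E[X^2]=\frac{\alpha(\alpha+1)}{(\alpha+\beta)(\alpha+\beta+1)}.
\]
For the variance I would subtract the squared mean and put everything over the common denominator \((\alpha+\beta)^2(\alpha+\beta+1)\). The numerator is then
\[
\alpha(\alpha+1)(\alpha+\beta)-\alpha^2(\alpha+\beta+1)=\alpha\beta,
\]
which is the stated formula.

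No step here is a real obstacle. The only points needing care are to justify the integral identity \(\int_0^1 x^{a-1}(1-x)^{b-1}\,dx=B(a,b)\) for \(a,b>0\), which is standard, and to get the algebra in the variance simplification right.
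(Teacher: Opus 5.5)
Your argument is correct and complete. Writing \(\mathbb{E}[X^m]=B(\alpha+m,\beta)/B(\alpha,\beta)\) and unwinding with \(\Gamma(z+1)=z\Gamma(z)\) gives the rising-product formula, and the variance numerator \(\alpha(\alpha+1)(\alpha+\beta)-\alpha^2(\alpha+\beta+1)=\alpha\beta\) checks out. The paper states this lemma without proof as a standard fact, so your derivation is the textbook argument it implicitly relies on, for instance when it uses \(\mathbb{E}[\zeta_t^p\mid\mathcal{F}_{t-1}]=\prod_{i=0}^{p-1}(i+1/2)/(i+d/2)\) with \(\alpha=1/2\), \(\beta=(d-1)/2\) in the weighted scan lemma.
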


\section{Product Martingales and Gaussian Angles}

\begin{lemma}[Stitched sub-Gaussian product boundary]
	\label{lem:stitched_product_mg}
	Let \((\mathcal H_k)_{k=0}^{T}\) be a filtration. Let
	\(q_k\in(0,1]\) and \(v_k\ge0\) be \(\mathcal H_k\)-measurable, let
	\(\vartheta_k\) be \(\mathcal H_{k+1}\)-measurable, \(P_0=1\), and
	\(P_K=\prod_{t=0}^{K-1}q_t\). Let
	\[
	\widetilde M_K
	=
	\sum_{k=0}^{K-1}\frac{\vartheta_k}{P_{k+1}},
	\qquad
	S_K=P_K\widetilde M_K,
	\qquad
	A_K=P_K^2\sum_{k=0}^{K-1}\frac{v_k}{P_{k+1}^2},
	\]
	where \(\vartheta_k\) is a martingale difference satisfying, conditionally on
	\(\mathcal H_k\),
	\[
	\EE[\vartheta_k\mid\mathcal H_k]=0,
	\qquad
	\EE[\exp(\lambda\vartheta_k)\mid\mathcal H_k]
	\le
	\exp(\lambda^2\sigma^2v_k),
	\qquad \lambda\in\mathbb R.
	\]
	Assume further that, for a deterministic number \(B_T\ge1\), on an event
	\(\mathcal P_T\),
	\[
	P_K^{-2}\le B_T,\qquad K=1,\dots,T,
	\]
	and let \(\bar A_K>0\) be deterministic variance envelopes. Set
	\[
	U_\star:=B_T\max_{K\le T}\bar A_K
	\]
	and assume that, for some \(0<\delta<1\) and confidence factor \(G\),
	\[
	G\ge 1+\log\frac1\delta,
	\qquad
	\max_{K\le T}
	\log\left(1+\log\left(e+\frac{U_\star}{\bar A_K}\right)\right)
	\le C\,G.
	\]
	Then, for a universal numerical constant \(C_{\mathrm{st}}\),
	\[
	\Pr\left(
	\mathcal P_T
	\cap
	\left\{
	\exists K\le T:
	A_K\le\bar A_K,\
	S_K>
	C_{\mathrm{st}}\sigma\sqrt{\bar A_K G}
	\right\}
	\right)
	\le\delta.
	\]
\end{lemma}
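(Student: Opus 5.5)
The plan is to reduce the statement to a time-uniform self-normalized bound for the martingale \(\widetilde M_K\), with intrinsic variance \(V_K:=\sum_{k<K}v_k/P_{k+1}^2\). The starting identity is \(S_K=P_K\widetilde M_K\) and \(A_K=P_K^2V_K\). On \(\{A_K\le\bar A_K\}\) this gives \(V_K\le\bar A_K/P_K^2\). Hence \(S_K>C_{\mathrm{st}}\sigma\sqrt{\bar A_KG}\) forces
\[
\widetilde M_K>C_{\mathrm{st}}\sigma\sqrt{G\,\bar A_K/P_K^2}\ge C_{\mathrm{st}}\sigma\sqrt{G\,\max(V_K,\bar A_K)}.
\]
Two measurability facts make \(\widetilde M\) usable. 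First, \(P_{k+1}=P_kq_k\) is \(\mathcal H_k\)-measurable. Second, conditionally on \(\mathcal H_k\), the increment \(\vartheta_k/P_{k+1}\) is centered with sub-Gaussian parameter \(\sigma^2v_k/P_{k+1}^2\). So for each fixed \(\lambda\), the process \(\exp(\lambda\widetilde M_K-\lambda^2\sigma^2V_K)\) is a nonnegative supermartingale starting at \(1\), and Ville's inequality (Lemma~\ref{lem:ville}) makes it hold uniformly over all \(K\le T\).

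Next, I localize the variance. On \(\mathcal P_T\) we have \(P_K^{-2}\le B_T\), so the relevant scale \(s_K:=\bar A_K/P_K^2\) lies in \([\bar A_K,B_T\bar A_K]\subseteq[\bar A_K,U_\star]\). Moreover \(V_K\le s_K\) on the event in question. I cover the range \([\min_K\bar A_K,U_\star]\) by dyadic levels \(a_j=2^j a_0\). The count matters: it suffices to cover, for each \(K\), the range \([\bar A_K,U_\star]\), which has \(\log_2(U_\star/\bar A_K)\) levels. For level \(j\) I take \(\lambda_j=\sqrt{2\log(1/\delta_j)/(\sigma^2a_j)}\) and \(\delta_j=\delta/(2(j+1)^2)\), so that \(\sum_j\delta_j\le\delta\).

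Ville's inequality then gives, simultaneously for all \(K\) and all \(j\),
\[
\widetilde M_K\le \frac{\log(1/\delta_j)}{\lambda_j}+\lambda_j\sigma^2V_K.
\]
Take \(j\) such that \(a_{j-1}<s_K\le a_j\). Since \(V_K\le s_K\le a_j\), the bound becomes \(\widetilde M_K\lesssim\sigma\sqrt{a_j\log(1/\delta_j)}\lesssim\sigma\sqrt{s_K\log(1/\delta_j)}\). Multiplying back by \(P_K\) yields \(S_K\lesssim\sigma\sqrt{\bar A_K\log(1/\delta_j)}\).

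It remains to bound the confidence cost, \(\log(1/\delta_j)\le\log(1/\delta)+2\log(j+1)+\log 2\). With levels anchored per \(K\) at \(\bar A_K\), the index \(j\) is at most about \(\log_2(U_\star/\bar A_K)\), so \(\log(j+1)\lesssim\log(1+\log(e+U_\star/\bar A_K))\le CG\) by hypothesis. Combined with \(G\ge1+\log(1/\delta)\), this gives \(\log(1/\delta_j)\lesssim G\), and adjusting \(C_{\mathrm{st}}\) finishes the proof.

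The main obstacle is the bookkeeping of the grid, because the \(\bar A_K\) vary with \(K\) while the union must be over one countable family of supermartingales. I would fix a single global grid \(a_j=2^j\min_K\bar A_K\) and weight it as \(\delta_{j}\propto\delta/(j+1)^2\). The log-log penalty at the selected level is \(\log(j+1)\), where \(j\le\log_2(U_\star/\min_K\bar A_K)\), and I need this to be \(\lesssim\log(1+\log(e+U_\star/\bar A_K))\) uniformly in \(K\). If the \(\bar A_K\) spread makes this fail, the fallback is a two-index union over pairs \((i,j)\): the anchor level \(i\) of \(\bar A_K\) and the offset \(j\) above it, weighted by \(\delta/((i+1)^2(j+1)^2)\). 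This only adds a \(\log(i+1)\) term, and \(i\le\log_2(U_\star/\min_K\bar A_K)\) is itself absorbed by the stated hypothesis with \(K\) achieving \(\min_K\bar A_K\). The remaining step to check is that \(P_K\) cancels exactly, i.e.\ that \(\sqrt{s_K}\,P_K=\sqrt{\bar A_K}\), which holds by the definition of \(s_K\).
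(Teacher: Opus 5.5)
Your proof is correct, but it peels over a different quantity than the paper does.

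The paper peels over the realized intrinsic variance $\widetilde V_K$, with epochs $(2^{-j-1}U_\star,2^{-j}U_\star]$ descending to $0$. It first proves the self-normalized boundary $\widetilde M_K\le C\sigma\sqrt{\widetilde V_K\bigl(\log(1/\delta)+\log(1+\log(e+U_\star/\widetilde V_K))\bigr)}$ for all $K$ with $0<\widetilde V_K\le U_\star$. After multiplying by $P_K$, it has two further steps. It must treat $\widetilde V_K=0$ separately. It also invokes the deterministic transfer inequality $a\log(1+\log(e+U/a))\le Cb\bigl[1+\log(1+\log(e+U/b))\bigr]$ for $0<a\le b$, to replace the random $A_K$ by $\bar A_K$; this is needed because the log-log penalty degenerates as $A_K\to0$.

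You instead peel over the threshold scale $s_K=\bar A_K/P_K^2$ and use only $V_K\le s_K$ on $\{A_K\le\bar A_K\}$. On $\mathcal P_T$ you have $s_K\in[\min_K\bar A_K,U_\star]$, so the selected level is bounded. The identity $P_K\sqrt{s_K}=\sqrt{\bar A_K}$ cancels exactly. Neither the zero-variance case nor the transfer inequality is needed.

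What each route buys: the paper's gives a stronger, variance-adaptive intermediate bound, with $S_K$ controlled by $A_K$ itself up to log-log factors. That is more than the lemma requires. Yours is shorter and tailored to exactly the event in the statement.

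Your fallback two-index union is unnecessary. With the single global grid $a_j=2^j\min_K\bar A_K$, you only need $\log(j+1)\lesssim G$, not a per-$K$ comparison. The hypothesis is a maximum over $K$, so it applies at the minimizing index and gives $\log\bigl(1+\log(e+U_\star/\min_K\bar A_K)\bigr)\le CG$. That is exactly the observation you already make in the fallback.
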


\begin{proof}
	\proofstep{Exponential supermartingale.}
	The variable \(P_{k+1}\) is \(\mathcal H_k\)-measurable, whereas
	\(\vartheta_k\) is \(\mathcal H_{k+1}\)-measurable. Hence the transformed
	increments \(\vartheta_k/P_{k+1}\) are adapted martingale differences. The
	conditional mgf assumption gives the same sub-Gaussian bound with variance
	proxy \(v_k/P_{k+1}^2\). Thus \(\widetilde M_K\) is a sub-Gaussian
	martingale with variance proxy
	\[
	\widetilde V_K:=\sum_{k=0}^{K-1}\frac{v_k}{P_{k+1}^2}.
	\]
	
	For every \(\lambda>0\),
	\[
	L_K(\lambda)
	:=
	\exp\left(
	\lambda\widetilde M_K
	-\lambda^2\sigma^2\widetilde V_K
	\right),
	\qquad K=0,\dots,T,
	\]
	is a nonnegative supermartingale. Indeed, after conditioning on
	\(\mathcal H_k\), the one-step multiplicative factor has expectation at most
	one by the preceding conditional mgf bound. Lemma~\ref{lem:ville} therefore
	gives, for every \(r>0\),
	\[
	\Pr\left(
	\exists K\le T:
	\lambda\widetilde M_K-\lambda^2\sigma^2\widetilde V_K\ge r
	\right)
	\le e^{-r}.
	\]
	
	\proofstep{Peeling over the intrinsic variance.}
	Fix a deterministic \(U>0\). For
	\(j=0,1,2,\dots\), set
	\[
	I_j:=\left(2^{-j-1}U,\,2^{-j}U\right],
	\qquad
	\delta_j:=\frac{6\delta}{\pi^2(j+1)^2},
	\qquad
	r_j:=\log\frac1{\delta_j},
	\]
	and choose
	\[
	\lambda_j:=
	\sqrt{\frac{r_j}{\sigma^2\,2^{-j}U}}.
	\]
	The intervals \(I_j\) partition \((0,U]\). On the
	complement of the Ville event for epoch \(j\), every \(K\le T\) with
	\(\widetilde V_K\in I_j\) satisfies
	\[
	\widetilde M_K
	\le
	\lambda_j\sigma^2\widetilde V_K+\frac{r_j}{\lambda_j}
	=
	\sigma\sqrt{r_j}
	\left(
	\frac{\widetilde V_K}{\sqrt{2^{-j}U}}
	+\sqrt{2^{-j}U}
	\right)
	\le
	3\sigma\sqrt{\widetilde V_K\,r_j}.
	\]
	The last inequality uses
	\(\widetilde V_K\le2^{-j}U\le2\widetilde V_K\) on \(I_j\). Moreover, on
	this epoch,
	\[
	r_j
	\le
	C\left\{
	\log\frac1\delta
	+
	\log\left(1+\log\left(e+\frac{U}{\widetilde V_K}\right)\right)
	\right\}.
	\]
	Since \(\sum_{j\ge0}\delta_j\le\delta\), a union bound over the
	variance epochs shows that, with probability at least \(1-\delta\),
	the following bound holds simultaneously for all \(K\le T\) with
	\(0<\widetilde V_K\le U\):
	\begin{equation}
		\label{eq:stitched_product_mg_raw_boundary}
		\widetilde M_K
		\le
		C\sigma\sqrt{\widetilde V_K
			\left(
			\log\frac1\delta+
			\log\left(1+\log\left(e+\frac{U}{\widetilde V_K}\right)\right)
			\right)}.
	\end{equation}
	The estimate is already uniform in the terminal time \(K\); Lemma~\ref{lem:ville}
	handled the maximum over \(K\) inside each epoch. If \(\widetilde V_K=0\),
	then all variance proxies up to time \(K\) are zero, and the conditional mgf
	bound with zero proxy forces the corresponding martingale increments to vanish
	almost surely; hence \(\widetilde M_K=0\).
	
	\proofstep{Transfer to deterministic variance envelopes.}
	Apply Eq.~\eqref{eq:stitched_product_mg_raw_boundary} with \(U=U_\star\).
	For every \(K\) that can contribute to the failure event---that is, every
	\(K\) satisfying \(A_K\le\bar A_K\)---the event \(\mathcal P_T\) implies
	\[
	\widetilde V_K=P_K^{-2}A_K
	\le
	B_T\bar A_K,
	\]
	and hence \(\widetilde V_K\le U_\star\). Multiplying
	Eq.~\eqref{eq:stitched_product_mg_raw_boundary} by \(P_K\) therefore gives
	\[
	S_K
	\le
	C\sigma
	\sqrt{
		A_K
		\left(
		\log\frac1\delta+
		\log\left(1+\log\left(e+\frac{U_\star}{\widetilde V_K}\right)\right)
		\right)}.
	\]
	It remains only to replace the random variance \(A_K\) by the deterministic
	envelope \(\bar A_K\). If \(A_K=0\), then \(\widetilde V_K=0\) and hence
	\(S_K=0\), so such a \(K\) cannot contribute to the failure event. We may
	therefore consider \(0<A_K\le\bar A_K\). Since \(q_t\le1\), \(P_K\le1\), so
	\(\widetilde V_K=P_K^{-2}A_K\ge A_K\); hence the logarithmic factor with
	\(U_\star/\widetilde V_K\) is no larger than the same factor with
	\(U_\star/A_K\). For \(0<a\le b\) and \(U>0\), we use the deterministic
	inequality
	\begin{equation}
	\label{eq:variance_envelope_loglog_transfer}
	a\log\left(1+\log\left(e+\frac{U}{a}\right)\right)
	\le
	Cb\left[
	1+\log\left(1+\log\left(e+\frac{U}{b}\right)\right)
	\right].
	\end{equation}
	To see this, write \(s=b/a\ge1\) and \(x=U/b\). Since
	\(e+sx\le s(e+x)\), we have
	\[
		\log\left(1+\log(e+sx)\right)
		\le
		C\left[
		\log\left(1+\log(e+x)\right)
		+
		\log\left(1+\log s\right)
		\right].
	\]
	Multiplying by \(a=b/s\) and using
	\(\sup_{s\ge1}s^{-1}\log(1+\log s)<\infty\) proves the displayed
	inequality, after increasing the universal constant \(C\). Applying
	Eq.~\eqref{eq:variance_envelope_loglog_transfer} with
	\(a=A_K\), \(b=\bar A_K\), and \(U=U_\star\), and also using
	\(A_K\log(1/\delta)\le\bar A_K\log(1/\delta)\), yields
	\[
	S_K
	\le
	C_{\mathrm{st}}\sigma\sqrt{\bar A_K\,G}
	\]
	for every \(K\le T\) with \(A_K\le\bar A_K\), where the assumed deterministic
	bound on
	\(\max_K\log(1+\log(e+U_\star/\bar A_K))\) and
	\(G\ge1+\log(1/\delta)\) were used in the final step. Thus the
	displayed failure event is contained in the complement of the stitched event
	constructed above, whose probability is at most \(\delta\).
\end{proof}


\begin{lemma}[Uniform weighted scan bounds]
\label{lem:weighted_scan_bounds}
	Let \(T\ge1\), \(d\ge1\), \(T_0\ge1\), and define
	\[
	w_t:=\frac{1}{t+T_0},
	\qquad
	\zeta_t:=\frac{(\bu_t^\top\ba_t)^2}{\|\bu_t\|^2},
	\qquad t=0,\dots,T-1,
	\]
	where \(\ba_t\) is an \(\cF_{t-1}\)-measurable unit vector.
	Assume that \((\cF_t)_{t\ge -1}\) is a filtration and that, for each
	\(t\), \(\bu_t\) is \(\cF_t\)-measurable while, conditionally on
	\(\cF_{t-1}\), \(\bu_t\sim\mathcal N(0,I_d)\) is independent of the
	past. Let
	\[
	J_T
	:=
	1+
	\left\lceil
	\log_2
	\left(
	\frac{T(T+T_0)}{T_0}
	\right)
	\right\rceil,
	\qquad
	\Gamma_T^{\rm scan}(\delta)
	:=
	1+\log\frac1\delta+\log J_T+\log(e+T_0).
	\]
	Then there is a universal numerical constant \(C>0\) such that, with
	probability at least \(1-\delta\), the following estimates hold
	simultaneously.
	First, for every interval \(0\le a\le b\le T-1\),
	\begin{equation}
	\label{eq:weighted_lower_scan_loglog}
		\sum_{t=a}^{b}w_t\zeta_t
		\ge
		\frac{1}{2d}\sum_{t=a}^{b}w_t
		-
		\frac{C}{dT_0}\Gamma_T^{\rm scan}(\delta).
	\end{equation}
	Second, for every \(1\le K\le T\),
	\begin{equation}
	\label{eq:weighted_prefix_upper_loglog}
		d\sum_{t=0}^{K-1}w_t\zeta_t
		\le
		C\left(1+\log(T+T_0)\right)
		+
		\frac{C}{T_0}\Gamma_T^{\rm scan}(\delta).
	\end{equation}
	Consequently, for every \(1\le K\le T\) and every
	\(k=-1,0,\dots,K-2\), with
	\[
	W_{K,k}:=\sum_{t=k+1}^{K-1}\frac1{t+T_0},
	\]
	one has
	\begin{equation}
	\label{eq:weighted_low_suffix_sum_corrected}
		\sum_{t=k+1}^{K-1}\frac{\zeta_t}{t+T_0}
		\ge
		\frac{W_{K,k}}{2d}
		-
		\frac{C}{dT_0}\Gamma_T^{\rm scan}(\delta).
	\end{equation}
\end{lemma}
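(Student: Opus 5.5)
The plan is to build two exponential supermartingales, one for the lower tail and one for the upper tail of the weighted angle sums. I would then use Ville's inequality (Lemma~\ref{lem:ville}) together with a dyadic peeling over start times to make both estimates uniform over all intervals. The only distributional input is that, conditionally on \(\cF_{t-1}\), \(\zeta_t\sim\mathrm{Beta}(1/2,(d-1)/2)\). This follows from Lemma~\ref{lem:beta_projection} and the \(\cF_{t-1}\)-measurability of \(\ba_t\); for \(d=1\), \(\zeta_t=1\) and both bounds are trivial. From Lemma~\ref{lem:beta} we have \(\EE[\zeta_t\mid\cF_{t-1}]=1/d\), and \(\zeta_t\) is sub-exponential at scale \(1/d\). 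The cleanest way to see this is to write \(\zeta_t=X_1^2/\|X\|^2\) and compare with \(X_1^2/d\) on a chi-square lower-tail event. I prefer instead the direct mgf bound \(\EE[e^{\lambda\zeta_t}\mid\cF_{t-1}]\le \exp(\lambda/d+C\lambda^2/d^2)\) for \(0\le\lambda\le cd\), which holds because the Beta moments satisfy \(\EE\zeta^m\le (Cm/d)^m\). On the negative side, \(\zeta_t\ge0\) gives \(\EE[e^{-\lambda\zeta_t}]\le \exp(-\lambda/d+\lambda^2\EE\zeta^2/2)\le\exp(-\lambda/d+3\lambda^2/(2d^2))\) for every \(\lambda\ge0\), using \(e^{-x}\le1-x+x^2/2\).

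For the lower scan \eqref{eq:weighted_lower_scan_loglog}, take \(\lambda_t=\lambda w_t\) with a common \(\lambda\) and fixed left endpoint \(a\). Then \(\exp\bigl(\sum_{t=a}^{b}(-\lambda w_t\zeta_t+\lambda w_t/d-3\lambda^2w_t^2/(2d^2))\bigr)\) is a supermartingale in \(b\). Ville's inequality gives, uniformly in \(b\),
\[
\sum_{t=a}^{b}w_t\zeta_t\ge \frac1d\sum_{t=a}^b w_t-\frac{3\lambda}{2d^2}\sum_{t=a}^b w_t^2-\frac{r}{\lambda}.
\]
Choosing \(\lambda=dT_a/3\) and using \(w_t\le 1/T_a\) gives \(\frac{3\lambda}{2d^2}\sum w_t^2\le\frac1{2d}\sum w_t\), which leaves the error \(3r/(dT_a)\le 3r/(dT_0)\). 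Notably, the \(1/2\) loss in the leading coefficient absorbs the quadratic term for all \(b\) at once, so no peeling over the length is required. The remaining issue is uniformity over the start point \(a\). I would group starting points into dyadic blocks \(a\in[2^j T_0-T_0,2^{j+1}T_0-T_0)\). On each block I run one supermartingale started at the block's left end \(a_j\), with \(\lambda_j=dT_{a_j}/3\). An interval starting inside the block is a difference of two prefix sums from \(a_j\), each controlled by the time-uniform Ville bound. The difference of the two bounds has the same form, at the cost of a factor \(2\) in the error \(r/\lambda_j\), and \(T_{a_j}\ge T_0\) keeps this error of order \(r/(dT_0)\). To justify subtracting, I would run a second supermartingale for the upper direction on the subtracted piece, so that both directions are controlled. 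There are at most \(J_T\) blocks, so a union bound with \(r=\log(2J_T/\delta)\le \Gamma_T^{\rm scan}(\delta)\) suffices; the \(\log(e+T_0)\) slack covers the \(T_0\)-scale grouping near the start. I expect this bookkeeping to be the main obstacle: the subtraction needs an upper bound on the prefix from \(a_j\) to \(a-1\), and that upper bound must be shown to cost only \(O(\Gamma/(dT_0))\) extra, which is plausible because \(\sum_{a_j\le t<a}w_t\le\log 2\). Accordingly the block-internal prefix contributes at most an \(O(1/d)\) mean term, and this must also be absorbed. If that absorption fails, I would instead shrink the blocks to ratio \(1+1/(2d)\)-type geometric cells, at the cost of only \(\log d\) more in \(J_T\).

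The upper prefix bound \eqref{eq:weighted_prefix_upper_loglog} is easier. A single supermartingale from \(t=0\) with \(\lambda=cdT_0\), which is admissible since \(\lambda w_t\le cd\), gives \(d\sum_{t<K}w_t\zeta_t\le \sum_{t<K}w_t+C\sum w_t^2 T_0+Cr/T_0\). Lemma~\ref{lem:T_sum} then bounds \(\sum w_t\le 1+\log(T+T_0)\) and \(T_0\sum w_t^2\le2\), and this holds uniformly in \(K\) by Ville. Finally, \eqref{eq:weighted_low_suffix_sum_corrected} is the lower scan specialized to \(a=k+1\), \(b=K-1\); the empty case \(k=K-2\) with no terms is trivial. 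The total failure probability is \(\delta\) after splitting it among the events.
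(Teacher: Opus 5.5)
\textbf{Verdict: there is a genuine gap, in the step that makes the lower scan uniform over the start point \(a\).}

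Your fixed-start argument is sound, and cleaner than the paper's. The one-sided bound \(\EE[e^{-\lambda\zeta_t}\mid\cF_{t-1}]\le\exp(-\lambda/d+3\lambda^2/(2d^2))\) holds for every \(\lambda\ge0\). With \(\lambda=dT_a/3\) and \(w_t\le1/T_a\), the quadratic compensator is absorbed by the \(\frac1{2d}\) slack uniformly in \(b\). So you need neither the paper's endpoint-mass peeling nor its annulus-by-annulus decomposition. The prefix upper bound is also fine.

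The problem is the uniformity over \(a\). Writing \(\sum_{t=a}^b=\sum_{t=a_j}^b-\sum_{t=a_j}^{a-1}\) does not give a bound of the same form at a factor-\(2\) cost.
\begin{itemize}
\item Your lower bound on \([a_j,b]\) carries only \(\frac1{2d}\) times the mass of \([a_j,a-1]\). Any valid upper bound on \(\sum_{t=a_j}^{a-1}w_t\zeta_t\) must be at least about \(\frac1d\sum_{t=a_j}^{a-1}w_t\). You therefore lose \(\frac1{2d}\sum_{t=a_j}^{a-1}w_t\), which can be as large as \(\frac{\log2}{2d}\).
\item Keeping the exact mean on both sides does not help. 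The two separately controlled prefixes carry deviations of order \(\frac1d\sqrt{r\sum_{t=a_j}^{a-1}w_t^2}\approx\frac1d\sqrt{r/T_{a_j}}\). For the first block this is \(\frac1d\sqrt{r/T_0}\gg\frac{r}{dT_0}\) whenever \(T_0\gg r\).
\end{itemize}
The absorption trick only works when the variance is proportional to the mass of \([a,b]\) itself. So this loss cannot be pushed into \(\frac{C}{dT_0}\Gamma_T^{\rm scan}(\delta)\).

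Your fallback, cells of ratio \(1+1/(2d)\), does not close the gap either. It still leaves a loss of order \(1/d^2\), or a fluctuation of order \(\frac1d\sqrt{r/(dT_0)}\). Neither is \(O(\Gamma_T^{\rm scan}(\delta)/(dT_0))\) when \(T_0\gg d\,\Gamma_T^{\rm scan}(\delta)\). That regime is allowed by the lemma and can occur in the application, where \(T_0=32dL/\mu\). The fallback also introduces a \(\log d\) that \(\Gamma_T^{\rm scan}(\delta)\) does not contain.

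The missing idea is that no grouping is needed. Union-bound over every start point, with a failure budget that decays in \(T_a\). Because your per-start error is \(3\log(1/\delta_a)/(dT_a)\), the \(T_a\) in the denominator pays for a \(\log T_a\) in the budget.
\begin{itemize}
\item Take \(\delta_a:=c\,\delta/\bigl(T_a(1+\log(T_a/T_0))^2\bigr)\). Then \(\sum_{a\ge0}\delta_a\le2c\delta\).
\item Since \(\log T_a/T_a\le\log(e+T_0)/T_0\) for \(T_a\ge T_0\), the error is at most \(\frac{C}{dT_0}\bigl(1+\log\frac1\delta+\log(e+T_0)\bigr)\).
\end{itemize}
This is exactly the role of the \(\log(e+T_0)\) term in \(\Gamma_T^{\rm scan}(\delta)\). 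It is also what the paper does, via its allocation \(\delta_{j,a,\ell}\propto\delta/\bigl((j+1)^2(N_j+1)(\ell+1)^2J_T\bigr)\) over all starting points \(a\) in each annulus. With this replacement your proof is complete and shorter than the paper's; the \(\log J_T\) term is not even needed for the lower scan.

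A minor slip: the case \(k=K-2\) in \eqref{eq:weighted_low_suffix_sum_corrected} is the one-term interval \(a=b=K-1\), not an empty sum.
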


\begin{proof}
	If \(d=1\), then \(\zeta_t=1\) almost surely. The lower bounds are
	immediate, while the prefix bound follows from
	\(\sum_{t=0}^{K-1}w_t\le 1+\log(T+T_0)\), after increasing \(C\). Hence
	we assume \(d\ge2\).

	\proofstep{Conditional moments of the angle variables.}
	Conditional on \(\cF_{t-1}\), the vector \(\ba_t\) is fixed and
	\(\bu_t\sim\mathcal N(0,I_d)\). Therefore
	\[
	\zeta_t\mid\cF_{t-1}
	\sim
	\mathrm{Beta}\left(\frac12,\frac{d-1}{2}\right),
	\]
	and Lemma~\ref{lem:beta} gives
	\[
	\EE[\zeta_t\mid\cF_{t-1}]=\frac1d,
	\qquad
	\mathrm{Var}(\zeta_t\mid\cF_{t-1})\le\frac{2}{d^2},
	\]
	together with the following centered moment estimate. For every integer
	\(p\ge1\),
	\[
	\EE[\zeta_t^p\mid\cF_{t-1}]
	=
	\prod_{i=0}^{p-1}\frac{i+1/2}{i+d/2}
	\le
	p!\left(\frac2d\right)^p.
	\]
	The last inequality uses \(i+d/2\ge d/2\) and
	\(\prod_{i=0}^{p-1}(2i+1)\le 2^p p!\).
	Thus, for every integer \(p\ge2\),
	\[
	\EE\left[
	\left|\zeta_t-\frac1d\right|^p
	\middle|\cF_{t-1}
	\right]
	\le
	p!\left(\frac4d\right)^p.
	\]
	Set
	\[
	Z_t:=w_t\left(\frac1d-\zeta_t\right).
	\]
	Then \(Z_t\) is conditionally centered,
	\[
	\EE[Z_t^2\mid\cF_{t-1}]
	\le
	\frac{2w_t^2}{d^2},
	\qquad
	Z_t\le\frac{w_t}{d}.
	\]

	\proofstep{A blockwise maximal inequality.}
	Fix a deterministic interval \([r,s]\) on which \(w_t\le m_0\), choose a sign
	\(\mathfrak s\in\{-1,1\}\), and write
	\(A_{r,u}:=\sum_{t=r}^{u}w_t\) and
	\[
	M_{r,u}^{(\mathfrak s)}
	:=
	\sum_{t=r}^{u}
	\mathfrak s\,w_t\left(\zeta_t-\frac1d\right).
	\]
	Let
	\(\mathcal B\subseteq\{r,\dots,s\}\) be a set of endpoints with largest
	element \(u_\star\), and suppose that
	\(A_{r,u_\star}\le2A_{r,u}\) for all \(u\in\mathcal B\). Then,
	for every \(\alpha\in(0,1)\), with probability at least \(1-\alpha\),
	\begin{equation}
	\label{eq:weighted_block_maximal}
		M_{r,u}^{(\mathfrak s)}
		\le
		\frac{A_{r,u}}{2d}
		+
		\frac{Cm_0}{d}\log\frac1\alpha,
		\qquad u\in\mathcal B.
	\end{equation}
	Indeed, for every integer \(p\ge2\), the centered moment estimate above
	implies
	\[
	\EE\!\left[
	\left|
	\mathfrak s\,w_t\left(\zeta_t-\frac1d\right)
	\right|^p
	\middle|\cF_{t-1}
	\right]
	\le
	\frac{p!}{2}
	\frac{32w_t^2}{d^2}
	\left(\frac{4m_0}{d}\right)^{p-2}.
	\]
	Hence, for \(0<\lambda<d/(4m_0)\),
	\[
	\EE\!\left[
	\exp\left\{
	\lambda\mathfrak s\,w_t\left(\zeta_t-\frac1d\right)
	\right\}
	\middle|\cF_{t-1}
	\right]
	\le
	\exp\left(
	\frac{\lambda^2(32w_t^2/d^2)}
	{2(1-4\lambda m_0/d)}
	\right).
	\]
	Therefore the standard exponential transform of
	\(M_{r,u}^{(\mathfrak s)}\), stopped at \(u_\star\), is a nonnegative
	supermartingale with variance proxy
	\[
	V_\star:=\frac{32}{d^2}\sum_{t=r}^{u_\star}w_t^2
	\le
	\frac{32m_0A_{r,u_\star}}{d^2}
	\]
	and Bernstein scale \(b_\star:=4m_0/d\). Applying
	Lemma~\ref{lem:ville} and optimizing over \(\lambda\) give
	\[
	\Pr\!\left(
	\max_{r\le u\le u_\star}M_{r,u}^{(\mathfrak s)}\ge x
	\right)
	\le
	\exp\left(-\frac{x^2}{2(V_\star+b_\star x)}\right).
	\]
	Thus, with probability at least \(1-\alpha\),
	\[
	\max_{r\le u\le u_\star}M_{r,u}^{(\mathfrak s)}
	\le
	\frac{C}{d}
	\left(
	\sqrt{m_0A_{r,u_\star}\log\frac1\alpha}
	+
	m_0\log\frac1\alpha
	\right).
	\]
	For \(u\in\mathcal B\), use \(A_{r,u_\star}\le2A_{r,u}\) and
	\(2\sqrt{xy}\le x/2+2y\) to absorb the square-root term into
	\(A_{r,u}/(2d)+Cm_0d^{-1}\log(1/\alpha)\), proving
	\eqref{eq:weighted_block_maximal}.

	\proofstep{Uniform lower scan.}
	Partition time into dyadic annuli
	\[
	\mathcal I_j
	:=
	\{t:2^jT_0\le t+T_0<2^{j+1}T_0\},
	\qquad j=0,1,\dots,J_T.
	\]
	Empty annuli are ignored. On \(\mathcal I_j\), set
	\(m_j:=(2^jT_0)^{-1}\); then \(w_t\le m_j\), while
	\(w_t\ge m_j/2\), and the number of possible starting points is at most
	\(N_j\le 2^jT_0+1\).

	Fix \(j\) and \(a\in\mathcal I_j\). For intervals
	\([a,b]\subseteq\mathcal I_j\), partition the endpoints by weighted mass:
	\[
	\mathcal B_{j,\ell}(a)
	:=
	\{b\in\mathcal I_j:b\ge a,\
	2^{\ell-1}m_j\le A_{a,b}<2^\ell m_j\},
	\qquad \ell\ge0.
	\]
	These blocks cover all admissible endpoints because \(A_{a,a}=w_a\ge
	m_j/2\). Moreover, \(\sum_{t\in\mathcal I_j}w_t\le C\); hence a nonempty
	block must satisfy \(2^{\ell-1}m_j\le C\), and therefore
	\[
	\ell\le C\bigl(\log(e+T_0)+j\bigr)
	\]
	after increasing the numerical constant \(C\). Thus only
	\(O(\log(e+T_0)+j)\) endpoint blocks are nonempty. If
	\(\mathcal B_{j,\ell}(a)\) is nonempty and \(b_{j,\ell}(a)\) is its largest
	endpoint, then
	\(A_{a,b_{j,\ell}(a)}<2A_{a,b}\) for every
	\(b\in\mathcal B_{j,\ell}(a)\). Apply
	\eqref{eq:weighted_block_maximal} with sign \(\mathfrak s=-1\), so that
	\(M_{a,b}^{(-1)}=\sum_{t=a}^{b}w_t(1/d-\zeta_t)\), and assign the failure
	probability
	\[
	\delta_{j,a,\ell}
	=
	\frac{c\delta}
	{(j+1)^2(N_j+1)(\ell+1)^2J_T},
	\]
	where \(c>0\) is a universal normalizing constant. Since
	\(|\mathcal I_j|\le N_j\), the total confidence budget is bounded by
	\[
	\sum_{j=0}^{J_T}\sum_{a\in\mathcal I_j}\sum_{\ell\ge0}
	\delta_{j,a,\ell}
	\le
	\frac{c\delta}{J_T}
	\sum_{j=0}^{J_T}\frac{1}{(j+1)^2}
	\frac{|\mathcal I_j|}{N_j+1}
	\sum_{\ell\ge0}\frac{1}{(\ell+1)^2}
	\le
	Cc\delta.
	\]
	Choosing \(c\) small enough makes this at most \(\delta/2\). On the resulting event,
	all intervals contained in a single annulus satisfy
	\[
	\sum_{t=a}^{b}w_t\zeta_t
	\ge
	\frac1{2d}\sum_{t=a}^{b}w_t
	-
	\frac{C}{d}
	\left(
	m_j\Gamma_T^{\rm scan}(\delta)
	+
	\frac{j+1}{2^jT_0}
	\right).
	\]
	Here we used
	\[
	m_j\log\frac1{\delta_{j,a,\ell}}
	\le
	Cm_j\Gamma_T^{\rm scan}(\delta)
	+
	\frac{C(j+1)}{2^jT_0},
	\]
	which follows from \(N_j\le2^jT_0+1\) and from the bound
	\(\ell\le C(\log(e+T_0)+j)\) for nonempty endpoint blocks.

	For an interval \([a,b]\) spanning multiple annuli, let
	\(j_0,\dots,j_1\) be the annuli it intersects. Decompose \([a,b]\) into the
	right tail of \(\mathcal I_{j_0}\), the full annuli
	\(\mathcal I_{j_0+1},\dots,\mathcal I_{j_1-1}\), and the left tail of
	\(\mathcal I_{j_1}\), omitting empty pieces. Applying the one-annulus bound
	to each piece, the leading weighted-mass terms sum to
	\((2d)^{-1}\sum_{t=a}^{b}w_t\). The accumulated error is at most
	\[
	\frac{C}{d}
	\sum_{j=j_0}^{j_1}
	\left(
	m_j\Gamma_T^{\rm scan}(\delta)
	+
	\frac{j+1}{2^jT_0}
	\right).
	\]
	Since each annulus is counted at most once and the deterministic errors are
	summable,
	\[
	\sum_{j\ge0}m_j\le\frac2{T_0},
	\qquad
	\sum_{j\ge0}\frac{j+1}{2^jT_0}\le\frac{C}{T_0}.
	\]
	Using \(\Gamma_T^{\rm scan}(\delta)\ge1\) and increasing \(C\), this proves
	\eqref{eq:weighted_lower_scan_loglog}.

	\proofstep{Uniform prefix upper bound.}
	Apply the same exponential-supermartingale
	estimate with sign \(\mathfrak s=1\) to the centered process
	\[
	R_K:=\sum_{t=0}^{K-1}w_t\left(\zeta_t-\frac1d\right).
	\]
	Since
	\[
	\sum_{t=0}^{T-1}w_t^2\le\frac2{T_0},
	\qquad
	w_0=\frac1{T_0},
	\]
	the preceding Bernstein--Ville calculation, now on the whole interval
	\([0,T-1]\), has variance proxy at most \(C/(d^2T_0)\) and Bernstein scale
	at most \(C/(dT_0)\). With probability at least \(1-\delta/2\), and using
	\(\Gamma_T^{\rm scan}(\delta)\ge\log(2/\delta)\),
	\[
	\max_{1\le K\le T}R_K
	\le
	\frac{C}{d}
	\left(
	\sqrt{\frac{\Gamma_T^{\rm scan}(\delta)}{T_0}}
	+
	\frac{\Gamma_T^{\rm scan}(\delta)}{T_0}
	\right).
	\]
	Using \(\sqrt{x}\le1+x\), adding
	\[
	\frac1d\sum_{t=0}^{K-1}w_t
	\le
	\frac{C(1+\log(T+T_0))}{d},
	\]
	and multiplying by \(d\), yields
	\eqref{eq:weighted_prefix_upper_loglog}. The suffix display
	\eqref{eq:weighted_low_suffix_sum_corrected} follows immediately from
	\eqref{eq:weighted_lower_scan_loglog} with \(a=k+1\) and \(b=K-1\).
\end{proof}

\section{Proofs of Section~\ref{sec:zsgd}}

\subsection{Proofs for the Initial Descent and Weighted-Product Recursion}

\begin{proof}[Proof of Lemma~\ref{lem:dec_1}]
	By the smoothness of \(f\) and the update rule of the algorithm,
	\begin{align*}
		&f(\bx_{t+1})
		\\
		\stackrel{\eqref{eq:L}}{\leq}&
		f(\bx_t) - \eta_t \dotprod{\nabla f(\bx_t), \bg(\bx_t;\xi_t)} + \frac{L\eta_t^2}{2} \norm{\bg(\bx_t;\xi_t)}^2 \\
		\stackrel{\eqref{eq:sg1}}{=}&
		f(\bx_t) - \eta_t \dotprod{\nabla f(\bx_t), \bu_t\bu_t^\top \nabla f(\bx_t;\xi_t) + \beta_t \bu_t} + \frac{L\eta_t^2}{2} \norm{\bu_t\bu_t^\top \nabla f(\bx_t;\xi_t) + \beta_t \bu_t}^2\\
		\leq&
		f(\bx_t) - \eta_t \dotprod{\nabla f(\bx_t), \bu_t\bu_t^\top \nabla f(\bx_t;\xi_t)} + \frac{\eta_t \beta_t^2}{2} + \frac{\eta_t ( \bu_t^\top \nabla f(\bx_t))^2}{2}\\
		&+
		L\eta_t^2 \left( \norm{\bu_t}^2 |\bu_t^\top \nabla f(\bx_t;\xi_t)|^2 + \beta_t^2 \norm{\bu_t}^2 \right)\\
		=&
		f(\bx_t) - \eta_t ( \bu_t^\top \nabla f(\bx_t))^2
		+ \eta_t \dotprod{\nabla f(\bx_t), \bu_t\bu_t^\top \be_t}
		\\
		&
		+ \frac{\eta_t \beta_t^2}{2}
		+ \frac{\eta_t ( \bu_t^\top \nabla f(\bx_t))^2}{2}
		+
		L\eta_t^2 \left( \norm{\bu_t}^2 |\bu_t^\top \nabla f(\bx_t;\xi_t)|^2 + \beta_t^2 \norm{\bu_t}^2 \right)\\
		\leq&
		f(\bx_t) - \eta_t ( \bu_t^\top \nabla f(\bx_t))^2
		+ \eta_t \dotprod{\nabla f(\bx_t), \bu_t\bu_t^\top \be_t}
		+ \frac{\eta_t \beta_t^2}{2}
		+ \frac{\eta_t ( \bu_t^\top \nabla f(\bx_t))^2}{2}
		\\
		&
		+
		L\eta_t^2 \left( 2\norm{\bu_t}^2 \Big( |\bu_t^\top \nabla f(\bx_t)|^2 + |\bu_t^\top \be_t|^2 \Big) + \beta_t^2 \norm{\bu_t}^2 \right)
		\\
		=&
		f(\bx_t) - \frac{\eta_t ( \bu_t^\top \nabla f(\bx_t))^2}{2}
		+ 2L\eta_t^2 \norm{\bu_t}^2 ( \bu_t^\top \nabla f(\bx_t))^2
		+ \frac{\eta_t \beta_t^2}{2}
		+ L\eta_t^2 \beta_t^2 \norm{\bu_t}^2\\
		&
		+ \eta_t \dotprod{\nabla f(\bx_t), \bu_t\bu_t^\top \be_t}
		+ 2L\eta_t^2\norm{\bu_t}^2|\bu_t^\top \be_t|^2.
	\end{align*}

	Consequently,
	\begin{align*}
		f(\bx_{t+1}) - f(\bx^*)
		\leq&
		f(\bx_t) - f(\bx^*)
		- \frac{\eta_t ( \bu_t^\top \nabla f(\bx_t))^2}{2}
		+ 2L\eta_t^2 \norm{\bu_t}^2 ( \bu_t^\top \nabla f(\bx_t))^2
		+ \Delta_{\alpha, t}\\
		&
		+ \eta_t \dotprod{\nabla f(\bx_t), \bu_t\bu_t^\top \be_t}
		+ 2L\eta_t^2\norm{\bu_t}^2|\bu_t^\top \be_t|^2\\
		=&
		f(\bx_t) - f(\bx^*)
		-
		\frac{\eta_t}{2} \left(1 - 4L\eta_t \norm{\bu_t}^2\right)( \bu_t^\top \nabla f(\bx_t))^2\\
		&
		+ \eta_t \dotprod{\nabla f(\bx_t), \bu_t\bu_t^\top \be_t}
		+
		2L\eta_t^2\norm{\bu_t}^2|\bu_t^\top \be_t|^2
		+ \Delta_{\alpha, t}\\
		\leq&
		f(\bx_t) - f(\bx^*)
		-
		\frac{\eta_t}{4}\bigl(\bu_t^\top\nabla f(\bx_t)\bigr)^2 \\
		&
		+ \eta_t \dotprod{\nabla f(\bx_t), \bu_t\bu_t^\top \be_t}
		+
		2L\eta_t^2\norm{\bu_t}^2|\bu_t^\top \be_t|^2
		+ \Delta_{\alpha, t}\\
		\leq&
		\left(1 - \frac{\mu\eta_t}{2}
		\frac{(\bu_t^\top\nabla f(\bx_t))^2}{\|\nabla f(\bx_t)\|^2}
		\right)\cdot \Big( f(\bx_t) - f(\bx^*) \Big)\\
		&
		+ \eta_t \dotprod{\nabla f(\bx_t), \bu_t\bu_t^\top \be_t}
		+
		2L\eta_t^2\norm{\bu_t}^2|\bu_t^\top \be_t|^2
		+ \Delta_{\alpha, t},
	\end{align*}
	where the second inequality uses
	\(\eta_t\le1/(8L\norm{\bu_t}^2)\), and the final inequality uses the
	Polyak--{\L}ojasiewicz inequality implied by \(\mu\)-strong convexity.

\end{proof}

\medskip

\begin{proof}[Proof of Lemma~\ref{lem:cE_rho}]
	Write
	\(
		w_t:=\frac1{T_t},
	\)
	and use the angle variable \(\zeta_t\) defined after
	Proposition~\ref{prop:nonvanishing_gradients}. To apply
	Lemma~\ref{lem:weighted_scan_bounds}, take the predictable field for the
	\(t\)-th summand to be the pre-direction history \(\mathcal F_t^{-}\), take
	the post-direction field to be \(\mathcal F_t^{u}\), and set
	\[
		\ba_t:=\frac{\nabla f(\bx_t)}{\|\nabla f(\bx_t)\|}.
	\]
	The inclusions in Proposition~\ref{prop:algorithm_conditional_subg} embed
	these pre- and post-direction fields into a single filtration, and
	Proposition~\ref{prop:nonvanishing_gradients} makes \(\ba_t\) well-defined
	almost surely. By
	Lemma~\ref{lem:weighted_scan_bounds}, with probability at least
	\(1-\delta\), the lower scan
	\eqref{eq:weighted_lower_scan_loglog} and the prefix upper control
	\eqref{eq:weighted_prefix_upper_loglog} hold.
	Since the present \(\Gamma_T(\delta)\) dominates the scan confidence factor,
	we may replace \(\Gamma_T^{\rm scan}(\delta)\) by \(\Gamma_T(\delta)\) in the
	two resulting bounds.

	The lower scan gives, for every interval,
	\[
		\sum_{t=a}^{b}w_t\zeta_t
		\ge
		\frac{1}{2d}\sum_{t=a}^{b}w_t
		-
		\frac{C}{dT_0}\Gamma_T(\delta).
	\]
	Setting \(a=k+1\) and \(b=K-1\), then multiplying by \(2d\) and
	exponentiating, gives, simultaneously for every \(1\le K\le T\) and every
	\(k=-1,0,\dots,K-2\),
	\[
		\exp\left(
		-2d\sum_{t=k+1}^{K-1}
		\frac{\zeta_t}{T_t}
		\right)
		\le
		\exp\left(
		-\sum_{t=k+1}^{K-1}\frac1{T_t}
		+\frac{2C}{T_0}\Gamma_T(\delta)
		\right).
	\]
	The case \(k=-1\) is exactly \(\cE_{\rho_K}\), while
	\(k=0,\dots,K-2\) gives \(\cE_{\rho_{K,k}}\). The remaining case
	\(k=K-1\) holds because the left-hand side is \(1\) and
	\(\rho_{K,K-1}\ge1\).
	The prefix upper control gives
	\[
	d\sum_{t=0}^{K-1}w_t\zeta_t
	\le
	C(1+\Lambda)+
	\frac{C}{T_0}\Gamma_T(\delta)
	\le
	C\Lambda+C\frac{\Gamma_T(\delta)}{T_0},
	\qquad K\le T,
	\]
	which is the event \(\cE_\rho^+\).
\end{proof}

\medskip

\begin{proof}[Proof of Lemma~\ref{lem:dec_2}]
	Set
	\(\eta_t = 4d/[\mu (t+T_0) \norm{\bu_t}^2]\) with
	\(T_0 = 32 dL/\mu\), so that
	\(\eta_t \leq 1/(8L\norm{\bu_t}^2)\). Combining this choice with
	Eq.~\eqref{eq:dec_1} gives
	\begin{align*}
		\Delta_{t+1}
		\leq&
		\left(1 - \frac{2d}{T_t}\zeta_t \right)\cdot \Delta_t\\
		&
		+ \frac{4d }{\mu (t+T_0) \norm{\bu_t}^2} \cdot \dotprod{\nabla f(\bx_t), \bu_t\bu_t^\top \be_t}\\
		&
		+
		\frac{32L d^2 }{\mu^2 (t+T_0)^2 \norm{\bu_t}^4}\norm{\bu_t}^2|\bu_t^\top \be_t|^2
		+ \Delta_{\alpha, t}.
	\end{align*}

	Unrolling the preceding recursion with
	\[
	\Pi_{K,k}
	:=
	\prod_{t=k+1}^{K-1}
	\left(
	1-\frac{2d}{T_t}\zeta_t
	\right),
	\]
	yields
	\begin{align*}
		&\Delta_K
		\leq
		\prod_{t=0}^{K-1}
		\left(
		1-\frac{2d}{T_t}\zeta_t
		\right)
		\cdot
		\Delta_0
		\\
		&
		+
		\sum_{k=0}^{K-1} \left( \Pi_{K,k} \cdot \frac{4d \cdot \dotprod{\nabla f(\bx_k), \bu_k\bu_k^\top \be_k}}{\mu (k+T_0) \norm{\bu_k}^2} \right)
		\\
		&
		+
		\sum_{k=0}^{K-1} \left( \Pi_{K,k}
		\cdot
		\left(\frac{32 Ld^2 }{\mu^2 (k+T_0)^2} \cdot \frac{|\bu_k^\top \be_k|^2}{\norm{\bu_k}^2} + \Delta_{\alpha, k}
		\right)
		\right)
	\end{align*}

	Furthermore, Lemma~\ref{lem:g_decom} gives
	\(|\beta| \leq (L\alpha/2)\norm{\bu}^2\), and hence
	\begin{align*}
		\Delta_{\alpha, t}
		=&
		\frac{\eta_t \beta_t^2}{2}
		+ L\eta_t^2 \beta_t^2 \norm{\bu_t}^2
		\leq
		\frac{\eta_t L^2 \alpha^2 \norm{\bu_t}^4}{8}
		+
		\frac{\eta_t^2 L^3 \alpha^2\norm{\bu_t}^6}{4}
		\\
		=&
		\frac{ d L^2\alpha^2 \norm{\bu_t}^2}{2\mu(t+T_0)}
		+
		\frac{ 4d^2 L^3\alpha^2 \norm{\bu_t}^2}{ \mu^2 (t+T_0)^2 }.
	\end{align*}

	By the standing assumption \(L\ge\mu\),
	\(T_0=32dL/\mu\ge32d\). Hence each factor in \(\Pi_{K,k}\) is
	nonnegative, and \(1-x\le e^{-x}\) gives
	\[
		\Pi_{K,k}
		\le
		\exp\left(
		-2d\sum_{t=k+1}^{K-1}
		\frac{\zeta_t}{T_t}
		\right)
		\le
		\rho_{K,k}.
	\]
	The same argument with the product over \(t=0,\dots,K-1\), together with
	\(\cE_{\rho_K}\), bounds the initial product by \(\rho_K\).
	On the events \(\cE_{\rho_K}\) and \(\cE_{\rho_{K,k}}\), we use this
	upper bound only for the nonnegative initial, quadratic, and smoothing
	terms. The linear term is signed and therefore keeps the actual product
	\(\Pi_{K,k}\):
	\begin{align*}
		\Delta_K
		\leq&
		\rho_K \cdot \Delta_0
		+ \sum_{k=0}^{K-1} \Pi_{K,k} \left( \frac{4d \cdot \dotprod{\nabla f(\bx_k), \bu_k\bu_k^\top \be_k}}{\mu (k+T_0) \norm{\bu_k}^2} \right)
		\\
		&+
		\sum_{k=0}^{K-1} \rho_{K,k} \left(\frac{32Ld^2 }{\mu^2 (k+T_0)^2} \cdot \frac{|\bu_k^\top \be_k|^2}{\norm{\bu_k}^2}
		+
		\frac{ d L^2\alpha^2 \norm{\bu_k}^2}{2\mu(k+T_0)}
		+
		\frac{ 4d^2 L^3\alpha^2 \norm{\bu_k}^2}{ \mu^2 (k+T_0)^2 }
		\right).
	\end{align*}
\end{proof}

\subsection{Proofs for the Sub-Gaussian Consequences}

\begin{proof}[Proof of Lemma~\ref{lem:subg_projection}]
	If \(\bz=0\), the claim is immediate. Otherwise, taking
	\(\lambda=1/(2\sigma^2)\) in the assumed norm bound and using
	\(|\langle\be,\bz\rangle|\le\|\be\|\|\bz\|\) gives
	\[
		\EE\left[
		\exp\left(
		\frac{\langle\be,\bz\rangle^2}{2\sigma^2\|\bz\|^2}
		\right)
		\middle|\mathcal F
		\right]
		\le e^{1/2}.
	\]
	Thus \(\langle\be,\bz\rangle\) has a conditional \(\psi_2\)-norm bounded by
	a universal multiple of \(\sigma\|\bz\|\). For completeness, one may pass
	from this conditional Orlicz bound to an mgf bound by conditioning on
	\(\mathcal F\) and applying the usual moment argument pointwise: if
	\(X=\langle\be,\bz\rangle\) and \(a=C\sigma\|\bz\|\), then
	\(\EE[|X|^m\mid\mathcal F]\le C a^m m^{m/2}\) for all \(m\ge2\). Expanding
	\(\EE[\exp(\theta X)\mid\mathcal F]\), using
	\(\EE[X\mid\mathcal F]=0\), and summing the resulting series gives
	\(\EE[\exp(\theta X)\mid\mathcal F]\le
	\exp(C\theta^2a^2)\). Absorbing the universal constants into
	\(C_{\mathrm{sg}}\) yields Eq.~\eqref{eq:subg_projection_mgf}. This is the
	Euclidean specialization of Lemma~2.1 in \citet{liu2023revisiting}.
\end{proof}

\medskip

\begin{proof}[Proof of Lemma~\ref{lem:subg_random_direction_square}]
	If \(\be=0\), the displayed exponential moment equals one. We may
	therefore assume \(\be\ne0\) in the first part of the proof.
	Write \(\bv=\bu/\|\bu\|\). Conditional on \(\be\), the random variable
	\[
		\Theta
		:=
		\frac{(\bv^\top\be)^2}{\|\be\|^2}
	\]
	belongs to \([0,1]\) and satisfies
	\(\EE[\Theta\mid\be,\mathcal F]=1/d\). For \(s\ge0\), convexity of
	\(x\mapsto e^{sx}\) on \([0,1]\) gives
	\[
		e^{s\Theta}
		\le
		1+\Theta(e^s-1).
	\]
	Therefore,
	\[
		\EE_{\bu}\left[
		\exp\left(
			\lambda\frac{(\bu^\top\be)^2}{\|\bu\|^2}
		\right)
		\middle|\be,\mathcal F
		\right]
		\le
		1+\frac{
			\exp(\lambda\|\be\|^2)-1
		}{d}.
	\]
	Taking conditional expectation with respect to \(\be\), using the
	conditional exponential-moment hypothesis of the lemma, and then
	\(1+x\le e^x\), we obtain
	\[
		\EE\left[
		\exp\left(
			\lambda\frac{(\bu^\top\be)^2}{\|\bu\|^2}
		\right)
		\middle|\mathcal F
		\right]
		\le
		\exp\left(
			\frac{e^{\lambda\sigma^2}-1}{d}
		\right).
	\]
	When \(0<\lambda\le1/(2\sigma^2)\), \(e^{\lambda\sigma^2}-1\le
	2\lambda\sigma^2\), proving Eq.~\eqref{eq:subg_random_direction_square_mgf}.

	For the deterministic weighted statement, the conclusion is immediate when
	\(w_{\max}=0\). Otherwise,
	apply Eq.~\eqref{eq:subg_random_direction_square_mgf}
	conditionally at each time with \(\lambda w_k\) in place of \(\lambda\).
	If \(0<\lambda\le 1/(2\sigma^2w_{\max})\), then
	\[
		\EE\exp\left(
			\lambda
			\sum_{k=0}^{K-1}w_k
			\frac{(\bu_k^\top\be_k)^2}{\|\bu_k\|^2}
			-
			\frac{2\lambda\sigma^2}{d}W
		\right)
		\le 1.
	\]
	Markov's inequality with
	\(\lambda=1/(2\sigma^2w_{\max})\) yields
	Eq.~\eqref{eq:subg_weighted_quadratic}.
\end{proof}

\subsection{Proofs for the Linear Martingale Bound}

\begin{proof}[Proof of Lemma~\ref{lem:future_angle_innovations}]
	We first identify the conditional law of the future angles and then use its
	deterministic form to prove conditional independence from \(\xi_k\).

	\proofstep{Conditional product law.}
	Let \(\mathcal F_t^-\) be the history immediately before sampling
	\(\bu_t\). Conditional on \(\mathcal F_t^-\), the iterate \(\bx_t\) is
	known. Hence, on the probability-one event from
	Proposition~\ref{prop:nonvanishing_gradients},
	\[
		\ba_t:=\frac{\nabla f(\bx_t)}{\|\nabla f(\bx_t)\|}
	\]
	is a fixed unit vector. The new direction \(\bu_t\sim\mathcal N(0,I_d)\)
	is independent of \(\mathcal F_t^-\). Therefore, by rotational invariance
	and Lemma~\ref{lem:beta_projection}, for every bounded Borel function \(h\),
	\[
		\EE[h(\zeta_t)\mid \mathcal F_t^-]
		=
		\int h\,d\nu,\qquad
		\nu=\mathrm{Beta}\left(\frac12,\frac{d-1}{2}\right).
	\]
	Crucially, the right-hand side is deterministic: it does not depend on
	\(\bx_t\), even though \(\bx_t\) may depend on earlier oracle samples such
	as \(\xi_k\).

	If \(k=K-1\), there are no future angle variables and the claim is the empty
	product law. Otherwise set
	\[
		Z:=(\zeta_{k+1},\dots,\zeta_{K-1}),\qquad n:=K-k-1.
	\]
	For bounded Borel functions \(h_{k+1},\dots,h_{K-1}\), successive
	conditioning backward from time \(K-1\) gives
	\begin{align*}
	\EE\left[
	\prod_{t=k+1}^{K-1}h_t(\zeta_t)
	\middle|\mathcal F_k^+
	\right] 
	& =
	\EE\left[
	\prod_{t=k+1}^{K-2}h_t(\zeta_t)
	\EE\left[h_{K-1}(\zeta_{K-1})\middle|\mathcal F_{K-1}^{-}\right]
	\middle|\mathcal F_k^+
	\right] \\
	& =
	\left(\int h_{K-1}\,d\nu\right)
	\EE\left[
	\prod_{t=k+1}^{K-2}h_t(\zeta_t)
	\middle|\mathcal F_k^+
	\right].
	\end{align*}
	Repeating the same step for \(t=K-2,\dots,k+1\) yields
	\[
		\EE\left[
		\prod_{t=k+1}^{K-1}h_t(\zeta_t)
		\middle|\mathcal F_k^+
		\right]
		=
		\prod_{t=k+1}^{K-1}\int h_t\,d\nu.
	\]
	By the monotone-class theorem, this product-function identity implies that
	for every bounded Borel function \(h\) on \([0,1]^n\),
	\[
		\EE[h(Z)\mid\mathcal F_k^+]
		=
		\int h\,d\nu^{\otimes n}.
	\]
	In other words,
	\[
		\mathcal L(Z\mid\mathcal F_k^+)=\nu^{\otimes n}.
	\]
	This is stronger than the marginal laws alone: conditional on the
	post-update history \(\mathcal F_k^+\), the future angles are conditionally
	independent and have a deterministic joint law.

	\proofstep{Independence from the current oracle noise.}
	Let \(h\) be any
	bounded Borel function of \(Z\), and let \(\varphi\) be any bounded
	measurable function of \(\xi_k\). Since \(\mathcal F_k^+\) contains
	\(\xi_k\), the variable \(\varphi(\xi_k)\) is
	\(\mathcal F_k^+\)-measurable. Thus the tower property and the deterministic
	conditional law above give
	\begin{align*}
	\EE\!\left[\varphi(\xi_k)h(Z)\mid\mathcal F_k^{u}\right]
	=
	\EE\!\left[
	\varphi(\xi_k)\EE[h(Z)\mid\mathcal F_k^+]
	\middle|\mathcal F_k^{u}
	\right] 
	=
	\left(\int h\,d\nu^{\otimes n}\right)
	\EE[\varphi(\xi_k)\mid\mathcal F_k^{u}].
	\end{align*}
	On the other hand, applying the same tower property without
	\(\varphi(\xi_k)\) gives
	\[
		\EE[h(Z)\mid\mathcal F_k^{u}]
		=
		\int h\,d\nu^{\otimes n}.
	\]
	Combining the last two displays,
	\[
	\EE\!\left[\varphi(\xi_k)h(Z)\mid\mathcal F_k^{u}\right]
	=
	\EE[\varphi(\xi_k)\mid\mathcal F_k^{u}]
	\EE[h(Z)\mid\mathcal F_k^{u}].
	\]
	This identity for all bounded measurable \(h\) and \(\varphi\) is precisely
	the conditional independence of \(\sigma(Z)\) and \(\xi_k\) given
	\(\mathcal F_k^{u}\).

	Finally, \(\be_k=\nabla f(\bx_k)-\nabla f(\bx_k;\xi_k)\) is a measurable
	function of \((\bx_k,\xi_k)\), while \(\bx_k\) is already
	\(\mathcal F_k^{u}\)-measurable. Hence the same conditional independence
	holds with \(\be_k\) in place of \(\xi_k\). Equivalently, for every bounded
	measurable \(\psi\),
	\[
		\EE\!\left[\psi(\be_k)\mid
		\mathcal F_k^{u}\vee\sigma(Z)\right]
		=
		\EE\!\left[\psi(\be_k)\mid\mathcal F_k^{u}\right]
		\quad\text{a.s.}
	\]
	By monotone convergence, the same identity extends to nonnegative
	measurable \(\psi\). Therefore, revealing the future-angle
	\(\sigma\)-field preserves the conditional mean-zero property, the
	conditional law of \(\be_k\), and its sub-Gaussian bounds.
\end{proof}

\medskip

\begin{proof}[Proof of Lemma~\ref{lem:product_linear_uniform}]
	\proofstep{Angle-enlarged filtration.}
	Since \(T_0=32dL/\mu\ge32d\) and \(0\le\zeta_t\le1\), the factors
	\(q_t\) belong to \([15/16,1]\). Hence they satisfy the product-factor
	condition in Lemma~\ref{lem:stitched_product_mg}.
	Use the angle-enlarged pre-noise filtration
	\[
	\mathcal H_k
	:=
	\mathcal F_k^{u}\vee\sigma(\zeta_{k+1},\dots,\zeta_{T-1}),
	\qquad 0\le k\le T-1,
	\qquad
	\mathcal H_T:=\mathcal F_T^{-},
	\]
	where \(\mathcal F_k^{u}\) is the post-direction history in
	Proposition~\ref{prop:algorithm_conditional_subg}. The family
	\((\mathcal H_k)_{k=0}^{T}\) is increasing. Indeed, for \(0\le k\le T-2\),
	\[
	\mathcal F_k^{u}
	\subseteq
	\mathcal F_k^{+}
	\subseteq
	\mathcal F_{k+1}^{-}
	\subseteq
	\mathcal F_{k+1}^{u},
	\]
	and \(\zeta_{k+1}\) is \(\mathcal F_{k+1}^{u}\)-measurable, while
	\(\sigma(\zeta_{k+2},\dots,\zeta_{T-1})\) is part of both adjacent enlarged
	fields. Thus \(\mathcal H_k\subseteq\mathcal H_{k+1}\). For the last step,
	\(\mathcal H_{T-1}=\mathcal F_{T-1}^{u}\subseteq
	\mathcal F_{T-1}^{+}\subseteq\mathcal F_T^{-}=\mathcal H_T\). With respect
	to this filtration, \(q_k\), \(P_{k+1}^{-1}\), and \(v_k\) below are
	\(\mathcal H_k\)-measurable, while \(\vartheta_k\) is
	\(\mathcal H_{k+1}\)-measurable. By
	Lemma~\ref{lem:future_angle_innovations}, revealing the future scalar
	angles does not change the conditional mean-zero property or the
	conditional sub-Gaussian law of the current noise \(\be_k\).

	\proofstep{Martingale increment and variance proxy.}
	Set
	\[
		\vartheta_k
		:=
		\frac{1}{T_k}
		\frac{\nabla f(\bx_k)^\top\bu_k\,\bu_k^\top\be_k}{\|\bu_k\|^2},
		\qquad
		v_k
		:=
		C_{\mathrm{sg}}
		\frac{1}{T_k^2}
		\frac{(\nabla f(\bx_k)^\top\bu_k)^2}{\|\bu_k\|^2}.
	\]
	Then \(\EE[\vartheta_k\mid\mathcal H_k]=0\), and
	Lemma~\ref{lem:subg_projection} gives, for every \(\lambda\in\mathbb R\),
	\[
		\EE\!\left[
		\exp(\lambda\vartheta_k)\mid\mathcal H_k
		\right]
		\le
		\exp(\lambda^2\sigma^2v_k).
	\]
	Since \(\Pi_{K,k}=P_K/P_{k+1}\),
	\[
		\sum_{k=0}^{K-1}Y_{K,k}^{\Pi}
		=
		P_K\sum_{k=0}^{K-1}\frac{\vartheta_k}{P_{k+1}},
		\qquad
		P_K^2\sum_{k=0}^{K-1}\frac{v_k}{P_{k+1}^2}
		=
		C_{\mathrm{sg}}A_K^\Pi.
	\]

	\proofstep{Stitched product boundary.}
	Apply Lemma~\ref{lem:stitched_product_mg} with failure probability
	\(\delta_{\rm lin}\), confidence factor \(G\), deterministic product
	envelope
	\[
		B_T=(T+T_0)^{C_P}
		\exp\!\left(C_P\frac{\Gamma_T(\delta)}{T_0}\right),
	\]
	and deterministic variance envelopes \(C_{\mathrm{sg}}\bar A_K\). The
	displayed dyadic-scale condition remains valid under this fixed rescaling of
	\(\bar A_K\), after a possible enlargement of the universal
	constant. Absorbing \(C_{\mathrm{sg}}\) and the universal
	product-bound constant into \(C_Y\) yields
	Eq.~\eqref{eq:product_linear_uniform_bound}.
\end{proof}

\subsection{Proofs for the Quadratic Noise Term}

\begin{proof}[Proof of Lemma~\ref{lem:Q_up}]
	Fix \(K\le T\). Set
	\[
		w_k=\frac{\rho_{K,k}}{T_k^2},
		\qquad
		W=\sum_{k=0}^{K-1}w_k,
		\qquad
		w_{\max}
		=
		\max_{0\le k\le K-1}w_k.
	\]
	The weights are deterministic and nonnegative, since \(\rho_{K,k}\) is the
	deterministic envelope defined in Eq.~\eqref{eq:rho_kk}.
	If \(w_{\max}=0\), then all \(w_k=0\), and the defining inequality of
	\(\cE_{Q_K}\) is immediate. Hence assume \(w_{\max}>0\) below.

	Let \(\mathcal F_k^{-}\) be the history immediately before sampling
	\(\bu_k\). Conditional on \(\mathcal F_k^{-}\), the iterate \(\bx_k\) is
	fixed. By Proposition~\ref{prop:algorithm_conditional_subg}, the noise
	vector \(\be_k=\nabla f(\bx_k)-\nabla f(\bx_k;\xi_k)\) satisfies
	\[
		\EE[\be_k\mid\mathcal F_k^{-}]=0,
		\qquad
		\EE\!\left[
		\exp\left(\lambda\|\be_k\|^2\right)
		\middle|\mathcal F_k^{-}
		\right]
		\le
		\exp(\lambda\sigma^2),
		\qquad 0<\lambda<\sigma^{-2}.
	\]
	Moreover, the sampling rule of Algorithm~\ref{alg:SA_1} makes
	\(\bu_k\sim\mathcal N(0,I_d)\) conditionally independent of
	\(\mathcal F_k^{-}\vee\sigma(\be_k)\). Hence the one-step bound in
	Lemma~\ref{lem:subg_random_direction_square} applies at time \(k\). For
	any \(\lambda>0\) such that \(\lambda w_k\le1/(2\sigma^2)\),
	\[
		\EE\!\left[
		\exp\left(
		\lambda w_k
		\frac{(\bu_k^\top\be_k)^2}{\|\bu_k\|^2}
		\right)
		\middle|\mathcal F_k^{-}
		\right]
		\le
		\exp\left(\frac{2\lambda\sigma^2}{d}w_k\right).
	\]
	The summands up to time \(k-1\) are \(\mathcal F_k^{-}\)-measurable, so we
	may condition successively backward from \(k=K-1\) to \(k=0\). Iterating
	the preceding conditional-mgf estimate gives, for every
	\[
		0<\lambda\le\frac{1}{2\sigma^2w_{\max}},
	\]
	\[
		\EE\exp\left[
		\lambda
		\sum_{k=0}^{K-1}
		w_k
		\frac{(\bu_k^\top\be_k)^2}{\|\bu_k\|^2}
		-
		\frac{2\lambda\sigma^2}{d}W
		\right]
		\le 1,
	\]
	Markov's inequality with \(\lambda=(2\sigma^2w_{\max})^{-1}\) yields
	\[
	\begin{aligned}
	&\Pr\Bigg(
		\sum_{k=0}^{K-1}
		w_k\frac{(\bu_k^\top\be_k)^2}{\|\bu_k\|^2}
		>
		\frac{2\sigma^2}{d}W
		+
		2\sigma^2w_{\max}\log\frac1{\delta_K^{(Q)}}
	\Bigg)
	\\
	&\qquad\le
	\exp\left(
		-\lambda\,2\sigma^2w_{\max}
		\log\frac1{\delta_K^{(Q)}}
	\right)
	=
	\delta_K^{(Q)}.
	\end{aligned}
	\]
	Substituting \(w_k=\rho_{K,k}/T_k^2\) gives exactly the defining
	inequality of \(\cE_{Q_K}\), and hence
	\(\Pr(\cE_{Q_K})\ge1-\delta_K^{(Q)}\).
\end{proof}

\subsection{Proofs for the Smoothing-Bias Term}

\begin{proof}[Proof of Lemma~\ref{lem:alp_term_up}]
	Apply the Laurent--Massart inequality (Lemma~\ref{lem:chi_all}) to the
	collection of Gaussian coordinates defining \(\|\bu_k\|^2\), with
	coordinate weights
	\(w_k = \rho_{K,k}
	\left(
	\frac{c_1}{T_k} + \frac{c_2}{T_k^2}
	\right)\). The Laurent--Massart middle term is
	\[
	2\sqrt{d\log(1/\delta_K^{(\alpha)})\sum_k w_k^2}.
	\]
	Since \(d\ge1\), this is bounded by
	\[
	2d\sqrt{\log(1/\delta_K^{(\alpha)})\sum_k w_k^2}.
	\]
	Thus, with probability at least \(1-\delta_K^{(\alpha)}\),
	\begin{align*}
		&\sum_{k=0}^{K-1} \rho_{K,k}
		\left(
		\frac{c_1}{T_k} + \frac{c_2}{T_k^2}
		\right)\norm{\bu_k}^2
		\leq
		d \sum_{k=0}^{K-1} \rho_{K,k}
		\left(
		\frac{c_1}{T_k} + \frac{c_2}{T_k^2}
		\right)
		\\
		&+
		2d\sqrt{ \log\frac{1}{\delta_K^{(\alpha)}} \cdot \sum_{k=0}^{K-1} \rho_{K,k}^2
			\left(
			\frac{c_1}{T_k} + \frac{c_2}{T_k^2}
			\right)^2}
		+
		2d \log\frac{1}{\delta_K^{(\alpha)}} \max_{0 \leq k \leq K-1} \rho_{K,k}
		\left(
		\frac{c_1}{T_k} + \frac{c_2}{T_k^2}
		\right).
	\end{align*}
	This implies the stated relaxed event \(\cE_{\alpha_K}\) after choosing
	the universal constant \(C_\chi\) large enough.
\end{proof}

\subsection{Proofs for the Induction Event}

\begin{proof}[Proof of Lemma~\ref{lem:c_rho}]
	By Lemma~\ref{lem:T_sum} with \(k=0\), and by the definition of
	\(\rho_K\) in Eq.~\eqref{eq:rho_k},
	\begin{align*}
		\rho_K
		\le&
		\exp\left(\frac{2C}{T_0}\Gamma_T(\delta)\right)
		\frac{T_0}{T_K}
		\leq
		c_\rho \frac{T_0}{T_K}.
	\end{align*}

	Similarly, by Lemma~\ref{lem:T_sum} and Eq.~\eqref{eq:rho_kk},
	\begin{align*}
		\rho_{K,k}
		&\le
		\exp\left(\frac{2C}{T_0}\Gamma_T(\delta)\right)
		\frac{T_{k+1}}{T_K}.
	\end{align*}
	Since \(T_{k+1}=T_k+1\le2T_k\) for \(T_k\ge1\), we have
	\[
	\rho_{K,k}
	\le
	2\exp\!\left(
	\frac{2C}{T_0}\Gamma_T(\delta)
	\right)
	\frac{T_k}{T_K}
	\leq
	c_\rho
	\frac{T_k}{T_K}.
	\]
\end{proof}

\medskip

\begin{proof}[Proof of Lemma~\ref{lem:time_uniform_controls}]
	\proofstep{Confidence allocation.}
	We combine finitely many time-uniform estimates with finitely many terminal
	estimates. Use the explicit failure levels
\[
	\delta_{\rm scan}
	=
	\delta_{\rm prod}
	:=
	\frac{\delta}{6}
\]
for Lemma~\ref{lem:cE_rho} and Lemma~\ref{lem:product_linear_uniform},
respectively. More explicitly, choose the universal constant \(C\) in the scan
estimates, in \(\rho_K,\rho_{K,k}\), and in \(c_\rho\) large enough so that
\[
	C\,\Gamma_T(\delta)
	\ge
	C_0\left(
	1+\log\frac6\delta+\log J_T+\log(e+T_0)
	\right),
\]
where \(C_0\) is the corresponding universal constant in
Lemma~\ref{lem:weighted_scan_bounds}. Thus applying
Lemma~\ref{lem:cE_rho} at failure level \(\delta/6\) yields exactly the
displayed \(\rho\)-events with \(\Gamma_T(\delta)\) and the enlarged universal
constant \(C\). The terminal events are run with
\(\delta_\star=\exp\{-\Gamma_T(\delta)\}\). Because \(T_0=32dL/\mu\ge32\),
\[
	\delta_\star
	=
	\frac{\delta}
	{e\,J_T\,(e+T_0)}
	\le
	\frac{\delta}{6},
\]
so the two terminal failures contribute at most a fixed fraction of
\(\delta\). A final union bound over
\[
\delta_{\rm scan}+\delta_{\rm prod}+2\delta_\star\le\delta
\]
gives the claimed probability.

The symbols \(\delta_K^{(\cdot)}=\delta_\star\) only set the deterministic
thresholds in the fixed-\(K\) event notation; no union bound over \(K\) is
taken. The weighted-suffix and prefix upper-tail controls follow from
Lemma~\ref{lem:cE_rho}. In particular, since
\(\Gamma_T(\delta)=c_\delta\Lambda\), the prefix event implies
\begin{equation}
\label{eq:prefix_angle_product_envelope}
	d\sum_{t=0}^{K-1}\frac{\zeta_t}{T_t}
	\le
	C\Lambda+C\frac{\Gamma_T(\delta)}{T_0}
	\le
	C\Lambda\left(1+\frac{c_\delta}{T_0}\right),
	\qquad K\le T.
\end{equation}

	\proofstep{Product envelope from the angle scan.}
	For the actual product-linear term, define
\[
	q_t
	:=
	1-\frac{2d}{T_t}\zeta_t,
	\qquad
	P_0:=1,\qquad P_K:=\prod_{t=0}^{K-1}q_t.
\]
Since \(T_0=32dL/\mu\ge32d\), all \(q_t\in[15/16,1]\). The prefix
estimate \eqref{eq:prefix_angle_product_envelope} now gives the required
envelope. Indeed, \(0\le 2d\zeta_t/T_t\le1/16\), and
\(-\log(1-x)\le2x\) for \(0\le x\le1/16\). Hence, for every \(K\le T\),
\[
	\log P_K^{-2}
	=
	-2\sum_{t=0}^{K-1}\log\left(1-\frac{2d\zeta_t}{T_t}\right)
	\le
	4\sum_{t=0}^{K-1}\frac{2d\zeta_t}{T_t}
	=
	8d\sum_{t=0}^{K-1}\frac{\zeta_t}{T_t}
	\le
	C\Lambda+C\frac{\Gamma_T(\delta)}{T_0}.
\]
Exponentiating and enlarging the universal constant gives
\[
	P_K^{-2}
	\le
	(T+T_0)^{C_P}
	\exp\!\left(C_P\frac{\Gamma_T(\delta)}{T_0}\right),
	\qquad K\le T,
\]
for a universal constant \(C_P\). Thus the event \(\mathcal P_T(C_P)\) in
Lemma~\ref{lem:product_linear_uniform} holds on the weighted-scan event, with
the residual prefix-scan correction carried by the explicit
\(\exp\{C_P\Gamma_T(\delta)/T_0\}\) factor.

	\proofstep{Uniform linear-noise boundary.}
	The deterministic envelopes in Eq.~\eqref{eq:Abar_product_linear} are chosen
with \(C_A\) large enough to dominate the normalized-angle transfer used in the
induction proof below.
Let
\[
	U_T:=
	(T+T_0)^{C_P}
	\exp\!\left(C_P\frac{\Gamma_T(\delta)}{T_0}\right)
	\max_{K\le T}\bar A_K.
\]
Since Eq.~\eqref{eq:Abar_product_linear} depends on \(K\) only through
\(T_K^{-2}\), and since \(T_0\le T_K\le T+T_0\), uniformly over \(K\le T\),
\[
\begin{aligned}
	\frac{U_T}{\bar A_K}
	&=
	(T+T_0)^{C_P}
	\exp\!\left(C_P\frac{\Gamma_T(\delta)}{T_0}\right)
	\max_{\ell\le T}\left(\frac{T_K}{T_\ell}\right)^2
	\\
	&\le
	\exp\!\left(C_P\frac{\Gamma_T(\delta)}{T_0}\right)
	(T+T_0)^{C_P}\left(\frac{T+T_0}{T_0}\right)^2
	\\
	&\le
	C\exp\!\left(C_P\frac{\Gamma_T(\delta)}{T_0}\right)
	(T+T_0)^{C_P+2}.
\end{aligned}
\]
The preceding ratio contributes only a double logarithm in \(T+T_0\), plus the
single correction \(\Gamma_T(\delta)/T_0\) inside the inner logarithm. These
terms are controlled by the confidence factor because
\[
	\frac{T(T+T_0)}{T_0}\ge T
	\quad\Longrightarrow\quad
	J_T\ge1+\lceil\log_2 T\rceil,
\]
so \(\log J_T\) controls \(\log\log(e+T)\) up to a universal constant, while
\(\log(e+T_0)\) absorbs the remaining small-\(T\) and offset terms. Also
\(\log(1+\Gamma_T(\delta)/T_0)\le \Gamma_T(\delta)\). Hence the preceding
ratio gives
\[
	\max_{K\le T}
	\log\left(1+\log\left(e+\frac{U_T}{\bar A_K}\right)\right)
	\le
	C\,\Gamma_T(\delta).
\]
Because \(\Gamma_T(\delta)\ge1+\log(1/\delta_{\rm prod})\),
Lemma~\ref{lem:product_linear_uniform} with
\(\delta_{\rm lin}:=\delta_{\rm prod}\) and \(G=\Gamma_T(\delta)\) gives,
outside an additional failure probability \(\delta_{\rm prod}\), the
implication
\[
	A_K^\Pi\le\bar A_K
	\quad\Longrightarrow\quad
	\sum_{k=0}^{K-1}Y_{K,k}^{\Pi}
	\le
		C_Y\sigma\sqrt{\bar A_K\,\Gamma_T(\delta)}
\]
simultaneously for every \(K\le T\). Since
\(\delta_K^{(Y)}=\delta_\star=e^{-\Gamma_T(\delta)}\), this is exactly
\(\mathcal L_K(\bar A_K)\) for all \(K\le T\).

	\proofstep{Terminal nonnegative controls.}
	It remains to place the nonnegative controls required by the induction on
the same event. These controls are used only with terminal index \(T\), so no
terminal-time stitching is needed. Apply
Lemma~\ref{lem:Q_up} and Lemma~\ref{lem:alp_term_up} once, with \(K=T\) and
confidence parameter \(\delta_\star\). Since
\(\log(1/\delta_\star)=\Gamma_T(\delta)\), this gives
\(\cE_{Q_T}\) and \(\cE_{\alpha_T}\) with the logarithmic thresholds recorded
in Eq.~\eqref{eq:c_del}. Their total failure probability is at most
\(2\delta_\star\), which is already included in
the fixed confidence accounting at the start of the proof. Taking the
intersection of this fixed finite family of controls proves the lemma.
\end{proof}

\end{document}